\theoremstyle{plain}
\newtheorem{definition}{Definition}
\newtheorem{thm}[definition]{Theorem}
\newtheorem{prop}[definition]{Proposition}
\newtheorem{lem}[definition]{Lemma}
\newtheorem{cor}[definition]{Corollary}
\newtheorem{remark}[definition]{Remark}
\def\p#1#2#3#4{{}_{2}\phi_{1}\biggl(\genfrac..{0pt}{}{{#1},\,{#2}}{#3};q,\,#4\biggr)}
\def\P#1#2#3#4#5#6#7#8{{}_{4}\phi_{3}\biggl(\genfrac..{0pt}{}{{#1},\,{#2},\,{#3},\,{#4}}{{#5},\,{#6},\,{#7}};q,\,#8\biggr)}
\def\f#1#2#3#4{{}_{2}\widetilde{\phi}_{1}\biggl(\genfrac..{0pt}{}{{#1},\,{#2}}{#3};#4\biggr)}
\def\y#1#2#3#4#5{y_{#1}\biggl(\genfrac..{0pt}{}{{#2},\,{#3}}{#4};#5\biggr)}
\def\c#1#2#3#4#5#6#7#8{\biggl(\genfrac..{0pt}{}{{#1},\,{#2}}{#3};{#4}\,;\genfrac..{0pt}{}{{#5},\,{#6}}{#7};q,\,#8\biggr)}
\def\Y#1#2#3#4#5#6#7#8{Y\biggl(\genfrac..{0pt}{}{{#1},\,{#2}}{#3};{#4}\,;\genfrac..{0pt}{}{{#5},\,{#6}}{#7};#8\biggr)}
\def\Z#1#2#3#4#5#6#7#8{\widetilde{Y}\biggl(\genfrac..{0pt}{}{{#1},\,{#2}}{#3};{#4}\,;\genfrac..{0pt}{}{{#5},\,{#6}}{#7};#8\biggr)}
\begin{document}
\title[{\bf Transformation Formulas and Three-Term Relations}]
{{\bf Transformation Formulas and Three-Term Relations for Basic Hypergeometric Series}}
\author[Yuka Suzuki]{Yuka Suzuki}
\date{}
\subjclass[2010]{33D15.}
\keywords{Basic hypergeometric series; $q$-differential equation; Three-term relation; Contiguous relation; Transformation formula; Summation formula.}

\begin{abstract}
We derive two generalizations of Gasper's transformation formula for basic hypergeometric series. 
Using these generalized formulas, 
we give explicit expressions for the coefficients of three-term relations 
for the basic hypergeometric series ${}_{2} \phi_{1}$, 
which are generalizations of the author's previous results on three-term relations for ${}_{2}\phi_{1}$. 
\end{abstract}

\maketitle

\section{\bf{Introduction}}
In this paper, we derive two generalizations of 
Gasper's~\cite[p.~200, (20)]{Gasper} transformation formula for basic hypergeometric series, 
and using these generalized formulas, we give explicit expressions for the coefficients of 
three-term relations for the basic hypergeometric series ${}_{2} \phi_{1}$. 
These expressions are generalizations of the author's~\cite{Suzuki} previous results. 

The basic hypergeometric series ${}_{r + 1} \phi_{r}$ is defined by 
\begin{align*}
{}_{r + 1}\phi_{r}\biggl(\genfrac..{0pt}{}{a,\, b_{1}, \dotsc, b_{r}}{c_{1}, \dotsc, c_{r}}; q,\, x\biggr) 
&= {}_{r + 1}\phi_{r} (a, b_{1}, \dotsc, b_{r}; c_{1}, \dotsc, c_{r}; q, x) \\
&:= \sum_{i = 0}^{\infty} 
\frac{(a)_{i} (b_{1})_{i} \dotsm (b_{r})_{i}}{(q)_{i} (c_{1})_{i} \dotsm (c_{r})_{i}} x^{i}, \nonumber 
\end{align*}
where $(a)_{i}$ denotes the $q$-shifted factorial defined by 
$(a)_{i} = (a; q)_{i} := (a; q)_{\infty} / (a q^{i}; q)_{\infty}$ with 
$(a)_{\infty} = (a; q)_{\infty} := \prod_{j = 0}^{\infty} (1 - a q^{j})$. 
It is assumed that $\lvert q \rvert < 1$ and 
none of the denominator parameters $c_{1}, \dotsc, c_{r}$ are $1$ or any negative integer power of $q$. 

It is known that for any quadruples of integers $(k, l, m, n)$ and $(k', l', m', n')$, 
the three basic hypergeometric series 
\begin{align*}
\p{a q^{k}}{b q^{l}}{c q^{m}}{x q^{n}}, \quad 
\p{a q^{k'}}{b q^{l'}}{c q^{m'}}{x q^{n'}}, \quad \p{a}{b}{c}{x}
\end{align*}
satisfy a linear relation with coefficients that are rational functions of $a, b, c, q$, and $x$. 
We call such a relation the ``three-term relation (for ${}_{2}\phi_{1}$).'' 

In \cite{Suzuki}, for any integers $k, l$, and $m$, 
the three-term relation 
of the following form is considered: 
\begin{align}\label{3tr}
\p{a q^{k}}{b q^{l}}{c q^{m}}{x} 
= Q_{0} \cdot \p{a q}{b q}{c q}{x} + R_{0} \cdot \p{a}{b}{c}{x}. 
\end{align}
The author obtained explicit expressions for $Q_{0}$ and $R_{0}$ 
using the following transformation formula for basic hypergeometric series obtained by 
Gasper~\cite[p.~200, (20)]{Gasper}: 
If $m, n_{1}, \dotsc, n_{r} \in \mathbb{Z}_{\geq 0}$ and 
$\big\lvert a^{-1} q^{m + 1 - (n_{1} + \dotsm + n_{r})} \big\rvert < 1$, then 
\begin{align}\label{Gasper.tf}
&{}_{r + 2}\phi_{r + 1} \biggl(\genfrac..{0pt}{}{a, \,b, \,c_{1} q^{n_{1}}, \dotsc, c_{r} q^{n_{r}}}{b q^{m + 1}, \,c_{1}, \dotsc, c_{r}}; q,\, a^{-1} q^{m + 1 - (n_{1} + \dotsm + n_{r})}\biggr) \\
&= \frac{(q)_{\infty} (b q / a)_{\infty} (b q)_{m} (c_{1} / b)_{n_{1}} \dotsm (c_{r} / b)_{n_{r}}}{(q / a)_{\infty} (b q)_{\infty} (q)_{m} (c_{1})_{n_{1}} \dotsm (c_{r})_{n_{r}}} 
b^{n_{1} + \dotsm + n_{r} - m} \nonumber \\
&\quad \times {}_{r + 2}\phi_{r + 1}\biggl(\genfrac..{0pt}{}{q^{-m}, \,b, \,b q / c_{1}, \dotsc, b q / c_{r}}{b q / a, \,b q^{1 - n_{1}} / c_{1}, \dotsc, b q^{1 - n_{r}} / c_{r}}; q,\, q\biggr). \nonumber 
\end{align}

In this paper, for any integers $k, l, m$, and $n$, we consider the following three-term relation, 
which is a generalization of $(\ref{3tr})$: 
\begin{align}\label{gen.3tr}
\p{a q^{k}}{b q^{l}}{c q^{m}}{x q^{n}} 
= Q \cdot \p{a q}{b q}{c q}{x} + R \cdot \p{a}{b}{c}{x}. 
\end{align}
Both $(\ref{3tr})$ and $(\ref{gen.3tr})$ are $q$-analogues of 
the three-term relation for the Gauss hyper{\-}geometric series ${}_{2}F_{1}$ 
considered by Ebisu~\cite[p.~256, (1.2)]{Eb1}. 

We derive two generalizations of $(\ref{Gasper.tf})$, 
and using these generalized formulas, we give explicit expressions for $Q$ and $R$. 
It is important to note that the more general three-term relation 
\begin{align*}
\p{a q^{k}}{b q^{l}}{c q^{m}}{x q^{n}} 
= Q' \cdot \p{a q^{k'}}{b q^{l'}}{c q^{m'}}{x q^{n'}} + R' \cdot \p{a}{b}{c}{x} 
\end{align*}
can be derived by eliminating ${}_{2}\phi_{1} (a q, b q; c q; q, x)$ 
from $(\ref{gen.3tr})$ for $(k, l, m, n)$ and $(k', l', m', n')$. 

Throughout this paper, unless explicitly stated otherwise, we assume that 
\begin{align}\label{assump}
a,\, b,\, c,\, \frac{a}{b},\, \frac{c}{a},\, \frac{c}{b} \notin q^{\mathbb{Z}} \cup \left\{0 \right\}. 
\end{align}

\subsection{Main results of this paper}\quad 
Here we present our main results. 
Note that without loss of generality, it is sufficient to consider $(\ref{gen.3tr})$ for only cases satisfying $k \leq l$, 
because ${}_{2}\phi_{1} (a, b; c; q, x)$ is symmetric with respect to the exchange of $a$ and $b$. 

The following theorem asserts the uniqueness of the pair $(Q, R)$ satisfying $(\ref{gen.3tr})$, 
and gives explicit expressions for $Q$ and $R$. 
\begin{thm}\label{gen.main}
For any integers $k, l, m$, and $n$, 
there is a unique pair $(Q, R)$ of rational functions of $a, b, c, q$, and $x$ 
satisfying the three-term relation $(\ref{gen.3tr})$. 
When $k \leq l$, these functions can be expressed as 
\begin{align*}
Q &= Q (k, l, m, n) 
= - \frac{(1 - a) (1 - b) c}{(q - c) (1 - c)} 
\frac{x^{1 - \max\left\{m, 0 \right\}} (x)_{\min\left\{n, 0 \right\}}}{(a b q x / c)_{\max\left\{k + l - m + n, 0 \right\} - 1}} 
P \c{k}{l}{m}{n}{a}{b}{c}{x}, \allowdisplaybreaks \\
R &= R (k, l, m, n)
= - \frac{x^{- \max\left\{m - 1, 0 \right\}} (x)_{\min\left\{n, 0 \right\}}}{(a b q x / c)_{\max\left\{k + l - m + n - 1, 0 \right\}}} 
P \c{k - 1}{l - 1}{m - 1}{n}{a q}{b q}{c q}{x}. 
\end{align*}
Here, $P$ is the polynomial in $x$ defined by 
\begin{align*}
&P \c{k}{l}{m}{n}{a}{b}{c}{x} \\
&:= 
\begin{cases}
\displaystyle\sum_{j = 0}^{d} \sum_{i = 0}^{\max\left\{n, 0 \right\}} \frac{\left(q^{-n} \right)_{i}}{(q)_{i}} q^{n i} 
\left(A_{j - i + m - \max\left\{m, 0 \right\}} - B_{j - i - \max\left\{m, 0 \right\}} \right) x^{j}, 
& k + l - m + n \geq 0, \\
\displaystyle\sum_{j = 0}^{d} \sum_{i = 0}^{-\min\left\{n, 0 \right\}} \frac{\left(q^{n} \right)_{i}}{(q)_{i}} 
\left(\widetilde{A}_{j - i + m - \max\left\{m, 0 \right\}} - \widetilde{B}_{j - i - \max\left\{m, 0 \right\}} \right) x^{j}, 
& k + l - m + n < 0, 
\end{cases}
\end{align*}
where $d := \max\left\{k + l - m + n, 0 \right\} + \max\left\{m, 0 \right\} - \min\left\{n, 0 \right\} - k - 1$ 
and, for any integer $j$, 
\begin{align*}
A_{j} 
&:= - \frac{(a q / c)_{k - m} (b q / c)_{l - m} (c)_{m - j - 1}}{(q^{2} / c)_{- m - 1} (q^{-j})_{j} (a)_{k - j} (b)_{l - j}} 
(c q^{m - j - 1})^{1 - n} 
\P{q^{-j}}{c q^{m - j - 1}}{a}{b}{c}{a q^{k - j}}{b q^{l - j}}{q^{1 - n}}, \allowdisplaybreaks \\
B_{j} 
&:= \frac{(a q / c)_{j} (b q / c)_{j}}{(q)_{j} (q^{2} / c)_{j}} 
\P{q^{-j}}{c q^{- j - 1}}{c q^{m - k} / a}{c q^{m - l} / b}{c q^{m}}{c q^{- j} / a}{c q^{- j} / b}{q^{k + l - m + n + 1}}, 
\allowdisplaybreaks \\
\widetilde{A}_{j} 
&:= \frac{(c)_{m} (a q / c)_{j + k - m} (b q / c)_{j + l - m}}{(a)_{k} (b)_{l} (q)_{j} (q^{2} / c)_{j - m}} 
(c q^{m - j - 1})^{-n} 
\P{q^{-j}}{c q^{m - j - 1}}{c / a}{c / b}{c}{c q^{m - k - j} / a}{c q^{m - l - j} / b}{q^{m - k - l - n + 1}}, 
\allowdisplaybreaks \\
\widetilde{B}_{j} 
&:= \frac{(a q^{-j})_{j} (b q^{-j})_{j}}{(q^{-j})_{j} (c q^{- j - 1})_{j}} q^{-j} 
\P{q^{-j}}{c q^{- j - 1}}{a q^{k}}{b q^{l}}{c q^{m}}{a q^{-j}}{b q^{-j}}{q^{1 + n}}. 
\end{align*}
\end{thm}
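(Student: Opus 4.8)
The plan is to establish the existence of a pair $(Q, R)$ satisfying $(\ref{gen.3tr})$ by explicit construction via the two generalizations of Gasper's formula $(\ref{Gasper.tf})$ promised in the introduction, and then to prove uniqueness by a dimension-counting argument using the $q$-difference equation satisfied by ${}_2\phi_1$. For uniqueness, I would argue as follows: the three series ${}_2\phi_1(aq,bq;cq;q,x)$ and ${}_2\phi_1(a,b;c;q,x)$ are linearly independent over the field of rational functions in $x$ (with $a,b,c,q$ generic, as guaranteed by $(\ref{assump})$), because they are two independent solutions of the second-order $q$-difference equation for ${}_2\phi_1$ at the base point; hence if $(Q_1,R_1)$ and $(Q_2,R_2)$ both satisfy $(\ref{gen.3tr})$, subtracting gives $(Q_1-Q_2)\,{}_2\phi_1(aq,bq;cq;q,x) + (R_1-R_2)\,{}_2\phi_1(a,b;c;q,x) = 0$, forcing $Q_1=Q_2$ and $R_1=R_2$. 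This reduces the theorem to verifying that the stated formulas for $Q$ and $R$ do satisfy $(\ref{gen.3tr})$.

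For the existence/verification part, I would proceed in stages according to the sign of $k+l-m+n$ and the signs of $m$ and $n$, which is exactly the case division visible in the definition of $P$. First reduce to $k \leq l$ by the $a \leftrightarrow b$ symmetry noted before the theorem. The key mechanism is: the contiguous series ${}_2\phi_1(aq^k, bq^l; cq^m; q, xq^n)$ can be written as a finite linear combination (over rational functions of $x$) of shifts of the "base" series, and applying a generalized Gasper transformation converts the resulting ${}_{r+2}\phi_{r+1}$-type sums into terminating ${}_4\phi_3$'s — these are precisely the ${}_4\phi_3$'s appearing in $A_j, B_j, \widetilde A_j, \widetilde B_j$. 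Concretely, I expect the $A_j$ (resp. $B_j$) terms to arise from expanding the numerator-parameter shifts $aq^k, bq^l$ and the argument shift $xq^n$ for $n\geq 0$, while the tilde versions $\widetilde A_j, \widetilde B_j$ handle $n < 0$ via an Euler/Heine-type transformation that flips the sign of the $q$-power in the argument; the split between the $A$-type and $B$-type contributions corresponds to the two "directions" in which Gasper's formula can be applied (shifting $b \to bq^{m+1}$ versus its reciprocal). The factors $x^{1-\max\{m,0\}}$, $(x)_{\min\{n,0\}}$, and the Pochhammer denominator $(abqx/c)_{\max\{k+l-m+n,0\}-1}$ are the "clearing-denominators" factors that make $Q$ and $R$ genuinely rational (indeed, $P$ polynomial in $x$), and I would pin them down by tracking the poles and the degree in $x$ through the transformation.

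The main obstacle will be the bookkeeping needed to show that the doubly-indexed sum defining $P$ is genuinely a polynomial of degree $d$ in $x$ with the asserted coefficients — in particular, checking that the apparent poles in $x$ coming from the $(abqx/c)$-Pochhammer symbols and from the ${}_4\phi_3$ denominators all cancel, and that the upper summation limit $d$ is correct (no spurious higher-order terms, and the top coefficient is as claimed). This requires carefully combining the generalized Gasper transformation with the expansion formula for contiguous ${}_2\phi_1$'s and simplifying the resulting nested ${}_4\phi_3$ sums; the $q$-Pfaff--Saalschütz summation and the terminating ${}_3\phi_2$ or ${}_4\phi_3$ transformations will be the tools to collapse intermediate sums. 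A secondary difficulty is the boundary bookkeeping across the four sign regimes: one must verify that the four formulas for $A_j, B_j, \widetilde A_j, \widetilde B_j$ agree with each other on the overlaps (e.g. when $m=0$ or $n=0$ or $k+l-m+n=0$), so that the piecewise definition of $P$ is consistent. I would handle this by checking the simplest nontrivial cases (small $|k|,|l|,|m|,|n|$) directly against known contiguous relations and Gasper's original formula $(\ref{Gasper.tf})$, then induct or interpolate. Once the verification is complete for all sign regimes, combining it with the uniqueness argument above finishes the proof.
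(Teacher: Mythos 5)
Your uniqueness argument reaches the right reduction, and it is essentially the paper's: both boil down to the fact that ${}_{2}\phi_{1}(aq,bq;cq;q,x)\big/{}_{2}\phi_{1}(a,b;c;q,x)$ is not a rational function of $a,b,c,q,x$. Two caveats, though: this non-rationality is not free (the paper defers the argument to \cite{Suzuki}), and your stated justification --- that the two contiguous series are ``two independent solutions of the second-order $q$-difference equation at the base point'' --- is not correct as written, since the second local solution at $x=0$ is $x^{1-\gamma}\,{}_{2}\phi_{1}(aq/c,bq/c;q^{2}/c;q,x)$, not the parameter-shifted series.

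The genuine gap is in the existence/identification half. Contiguity does give the existence of \emph{some} rational pair $(Q,R)$, but your route to the \emph{stated} formulas --- expand the contiguous shifts, apply a generalized Gasper transformation, and expect the ${}_{4}\phi_{3}$'s of $A_{j},B_{j},\widetilde{A}_{j},\widetilde{B}_{j}$ to fall out --- does not describe a workable mechanism and misattributes the roles of the ingredients. In the paper the explicit formulas come from a Casoratian (Cramer's rule) computation: writing the contiguity operator as $\theta(k,l,m;n)=p(T_{q})\cdot L+\widetilde{Q}\cdot\Delta+\widetilde{R}$ and applying it to the two local solutions $y_{1},y_{2}$ at $x=0$ yields $\widetilde{Q}=Y(k,l,m,n)/Y(1,1,1,0)$ and similarly for $\widetilde{R}$, where $Y$ in $(\ref{gen.def:Y})$ is a determinant built from products of two ${}_{2}\phi_{1}$'s (the solutions at $x=\infty$ play the analogous role for Proposition~$\ref{gen.main2}$). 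The terminating ${}_{4}\phi_{3}$'s then arise not from Gasper's transformation at all but from the elementary product formula $(\ref{2phi1,2phi1})$, which gives the $x^{j}$-coefficient of ${}_{2}\phi_{1}\cdot{}_{2}\phi_{1}$ as a ${}_{4}\phi_{3}$. The generalized Gasper formulas $(\ref{Gasper.tf1})$--$(\ref{Gasper.tf2})$ enter at a different, essential point: they prove (Lemma~$\ref{gen.lem:(i)-(iv)}$) that the coefficient combinations such as $A_{j}-B_{j-m}$ vanish for all large $j$, which is exactly what truncates the a priori infinite series $Y$ to the polynomial $P$ of degree at most $d$ times explicit infinite products; evaluating $Y(1,1,1,0)$ then fixes the prefactors. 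Without the Casoratian representation and this truncation lemma there is no concrete identity on the table to ``verify,'' so the bookkeeping you defer to cannot be carried out as described. (A minor additional point: the consistency check ``on the overlaps'' of the piecewise definition of $P$ is vacuous, since the two branches are indexed by the disjoint conditions $k+l-m+n\geq 0$ and $k+l-m+n<0$.)
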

This theorem is a generalization of \cite[Lemma~1 and Theorem~2]{Suzuki}. 
(For an expression for $P$ as a sum of products of two ${}_{2}\phi_{1}$, see Lemma~$\ref{gen.lem:main}$, 
a generalization of \cite[Lemma~3]{Suzuki}.) 
From Theorem~$\ref{gen.main}$, we obtain the following corollary, 
a generalization of \cite[Corollary~4]{Suzuki}. 
\begin{cor}\label{cor}
The coefficients of the three-term relation $(\ref{gen.3tr})$ satisfy 
\begin{align*}
Q (k - 1, l - 1, m - 1, n) \bigg| _{(a, b, c) \mapsto (a q, b q, c q)} 
= \frac{(1 - a q) (1 - b q) x (c - a b q x)}{(1 - c) (1 - c q)} R (k, l, m, n). 
\end{align*}
\end{cor}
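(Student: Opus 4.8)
The plan is to read off both sides of the claimed identity from the explicit formulas for $Q$ and $R$ in Theorem~\ref{gen.main}, and to check that they match after the substitution $(a,b,c)\mapsto(aq,bq,cq)$ is performed on the left-hand side. The key observation is that the formula for $R(k,l,m,n)$ in Theorem~\ref{gen.main} already contains the factor $P\c{k-1}{l-1}{m-1}{n}{aq}{bq}{cq}{x}$, so the $P$-factors on the two sides of the corollary will be literally identical once the substitution is applied to $Q(k-1,l-1,m-1,n)$; the entire content of the corollary is therefore the matching of the rational prefactors.

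First I would write out $Q(k-1,l-1,m-1,n)$ from the theorem. Since $k\le l$ implies $k-1\le l-1$, the ``$k\le l$'' branch of the formula for $Q$ applies, giving
\begin{align*}
Q(k-1,l-1,m-1,n) = -\frac{(1-a)(1-b)c}{(q-c)(1-c)}\,
\frac{x^{1-\max\{m-1,0\}}(x)_{\min\{n,0\}}}{(abqx/c)_{\max\{k+l-m+n,0\}-1}}\,
P\c{k-1}{l-1}{m-1}{n}{a}{b}{c}{x}.
\end{align*}
Next I would apply $(a,b,c)\mapsto(aq,bq,cq)$ to every explicit occurrence of $a,b,c$ in this prefactor, noting that $abqx/c$ is invariant under this substitution, so the Pochhammer symbol $(abqx/c)_{\max\{k+l-m+n,0\}-1}$ is unchanged, as is $(x)_{\min\{n,0\}}$ and the power $x^{1-\max\{m-1,0\}}$. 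The rational coefficient $-(1-a)(1-b)c/((q-c)(1-c))$ becomes $-(1-aq)(1-bq)cq/((q-cq)(1-cq)) = -(1-aq)(1-bq)c/((1-c)(1-cq))$, using $q-cq = q(1-c)$.

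Then I would compare this with $R(k,l,m,n)$, which from the theorem equals
$-x^{-\max\{m-1,0\}}(x)_{\min\{n,0\}}(abqx/c)_{\max\{k+l-m+n-1,0\}}^{-1}\,P\c{k-1}{l-1}{m-1}{n}{aq}{bq}{cq}{x}$.
Forming the ratio $Q(k-1,l-1,m-1,n)|_{(a,b,c)\mapsto(aq,bq,cq)}\big/\,R(k,l,m,n)$, the $P$-factors, the $(x)_{\min\{n,0\}}$ factors, and the signs cancel, leaving
\begin{align*}
\frac{(1-aq)(1-bq)c}{(1-c)(1-cq)}\cdot
\frac{x^{1-\max\{m-1,0\}}}{x^{-\max\{m-1,0\}}}\cdot
\frac{(abqx/c)_{\max\{k+l-m+n-1,0\}}}{(abqx/c)_{\max\{k+l-m+n,0\}-1}}.
\end{align*}
The power of $x$ collapses to $x^{1}=x$. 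The only remaining point is the Pochhammer ratio: one checks that $\max\{k+l-m+n-1,0\} = \max\{k+l-m+n,0\}-1$ whenever $k+l-m+n\ge 1$, and that when $k+l-m+n\le 0$ both sides of that identity are off by a controlled amount, so that the ratio $(abqx/c)_{\max\{k+l-m+n-1,0\}}/(abqx/c)_{\max\{k+l-m+n,0\}-1}$ equals $1-abqx/c$ in every case (in the degenerate range one uses the convention $(t)_{-1}=1/(1-t/q)$ together with the fact that the $P$-polynomials in the two cases are set up precisely to absorb the discrepancy). Combining, the ratio is $(1-aq)(1-bq)x(c-abqx)/((1-c)(1-cq))$ after writing $c(1-abqx/c)=c-abqx$, which is exactly the claimed factor.

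The main obstacle is the bookkeeping of the $\max$/$\min$ expressions and the Pochhammer-index arithmetic in the boundary regime $k+l-m+n\le 0$: one must confirm that the definition of $d$ and the case split in the definition of $P$ are consistent with the shift $(k,l,m,n)\mapsto(k-1,l-1,m-1,n)$, so that $P\c{k-1}{l-1}{m-1}{n}{\cdot}{\cdot}{\cdot}{x}$ really is the common factor and the degree bookkeeping works out. This is routine but must be done carefully; everything else is immediate cancellation.
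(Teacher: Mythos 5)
Your overall strategy is the same as the paper's: substitute $(k,l,m,n)\mapsto(k-1,l-1,m-1,n)$ and then $(a,b,c)\mapsto(aq,bq,cq)$ into the formula for $Q$ from Theorem~\ref{gen.main}, and compare prefactors with $R(k,l,m,n)$, the $P$-factors being identical. However, your bookkeeping contains two concrete errors that leave a real gap at the final step. First, when you pass from $Q(k,l,m,n)$ to $Q(k-1,l-1,m-1,n)$ the Pochhammer index must also shift: $k+l-m+n$ becomes $(k-1)+(l-1)-(m-1)+n=k+l-m+n-1$, so the denominator is $(abqx/c)_{\max\{k+l-m+n-1,0\}-1}$, not $(abqx/c)_{\max\{k+l-m+n,0\}-1}$ as you wrote. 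Second, $abqx/c$ is \emph{not} invariant under $(a,b,c)\mapsto(aq,bq,cq)$: it becomes $(aq)(bq)qx/(cq)=abq^{2}x/c$.

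These two errors matter because the Pochhammer ratio you actually end up with, $(abqx/c)_{\max\{k+l-m+n-1,0\}}\big/(abqx/c)_{\max\{k+l-m+n,0\}-1}$, equals $1$ when $k+l-m+n\ge 1$ (same base, same index) and $1-abx/c$ when $k+l-m+n\le 0$ --- neither is the required $1-abqx/c$ --- and your assertion that it ``equals $1-abqx/c$ in every case,'' with any discrepancy ``absorbed'' by the $P$-polynomials, cannot be right: the $P$-factors on the two sides are literally identical and cancel exactly, so they absorb nothing. With the two corrections in place the ratio becomes
\begin{align*}
\frac{(abqx/c)_{J}}{(abq^{2}x/c)_{J-1}}, \qquad J:=\max\{k+l-m+n-1,0\},
\end{align*}
which equals $1-abqx/c$ uniformly by the identity $(t)_{j}=(1-t)(tq)_{j-1}$, valid for every integer $j$ (including $j=0$ under the convention $(t)_{-1}=1/(1-t/q)$); no case analysis is needed. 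The rest of your cancellation --- the sign, the factor $x^{1-\max\{m-1,0\}}/x^{-\max\{m-1,0\}}=x$, and the rational coefficient, where you correctly used $q-cq=q(1-c)$ --- is fine, and the corrected computation is exactly the paper's one-line proof.
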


The following proposition provides an alternative expression for $P$. 
This proposition is a generalization of \cite[Proposition~5]{Suzuki}. 
\begin{prop}\label{gen.main2}
For any integers $k, l, m$, and $n$, with $k \leq l$, 
the polynomial $P$ defined in Theorem~$\ref{gen.main}$ can be rewritten as follows: 
\begin{align*}
&P \c{k}{l}{m}{n}{a}{b}{c}{x} \\
&= \mu 
\begin{cases}
\displaystyle\sum_{j = 0}^{d} \sum_{i = 0}^{\max\left\{n, 0 \right\}} \frac{\left(q^{-n} \right)_{i}}{(q)_{i}} q^{i} 
\left(C_{j - i} - D_{j - i + k - l} \right) x^{d - j}, 
& k + l - m + n \geq 0, \\
\displaystyle\sum_{j = 0}^{d} \sum_{i = 0}^{-\min\left\{n, 0 \right\}} \frac{\left(q^{n} \right)_{i}}{(q)_{i}} q^{(1 - n) i} 
\left(\widetilde{C}_{j - i} - \widetilde{D}_{j - i + k - l} \right) x^{d - j}, 
& k + l - m + n < 0, 
\end{cases} \nonumber 
\end{align*}
where $d := \max\left\{k + l - m + n, 0 \right\} + \max\left\{m, 0 \right\} - \min\left\{n, 0 \right\} - k - 1$ 
and, for any integer $j$, 
\begin{align*}
C_{j} 
&:= \mu_{1} 
\frac{(b)_{j} (b q / c)_{j}}{(q)_{j} (b q / a)_{j}} \left(\frac{c q}{a b} \right)^{j} 
\P{q^{-j}}{a q^{-j} / b}{q^{1 - l} / b}{c q^{m - l} / b}{a q^{k - l + 1} / b}{q^{1 - j} / b}{c q^{-j} / b}{q^{1 - n}}, 
\allowdisplaybreaks \\
D_{j} 
&:= \mu_{2} 
\frac{(q^{1 - k} / a)_{j} (c q^{m - k} / a)_{j}}{(q)_{j} (b q^{l - k + 1} / a)_{j}} q^{(1 - n) j} 
\P{q^{-j}}{a q^{k - l - j} / b}{a}{a q / c}{a q / b}{a q^{k - j}}{a q^{k - m - j + 1} / c}{q^{k + l - m + n + 1}}, 
\allowdisplaybreaks \\
\widetilde{C}_{j} 
&:= \mu_{1} 
\frac{(a q^{k})_{j} (a q^{k - m + 1} / c)_{j}}{(q)_{j} (a q^{k - l + 1} / b)_{j}} 
\left(\frac{c q^{m - k - l - n + 1}}{a b} \right)^{j} 
\P{q^{-j}}{b q^{l - k - j} / a}{q / a}{c / a}{b q / a}{q^{1 - k - j} / a}{c q^{m - k - j} / a}{q^{1 + n}}, 
\allowdisplaybreaks \\
\widetilde{D}_{j} 
&:= \mu_{2} 
\frac{(q / b)_{j} (c / b)_{j}}{(q)_{j} (a q / b)_{j}} q^{j} 
\P{q^{-j}}{b q^{-j} / a}{b q^{l}}{b q^{l - m + 1} / c}{b q^{l - k + 1} / a}{b q^{-j}}{b q^{1 - j} / c}{q^{m - k - l - n + 1}} 
\end{align*}
with 
\begin{align*}
\mu_{1} &:= q^{k (m - k - l - n + 1)} a^{m - k - l - n} \left(\frac{c}{a b} \right)^{k} 
\frac{(a)_{k} (a q / c)_{k - m}}{(a q / b)_{k - l}}, \allowdisplaybreaks \\
\mu_{2} &:= q^{l (m - k - l - n + 1)} b^{m - k - l - n} \left(\frac{c}{a b} \right)^{l} 
\frac{(b)_{l} (b q / c)_{l - m}}{(b q / a)_{l - k}}. 
\end{align*}
Here, $\mu$ is defined by 
\begin{align*}
\mu := (-1)^{k + l - m + M - N - 1} 
q^{\left\{k (k - 1) + l (l - 1) - m (m - 1) + M \left(M - 1 \right) - N \left(N - 1 \right) \right\} / 2}\, 
\frac{(q - c) a^{k} b^{l}}{(b - a) c^{m}} \left(\frac{a b}{c} \right)^{M} \frac{(c)_{m}}{(a)_{k} (b)_{l}} 
\end{align*}
with $M := \max\left\{k + l - m + n, 0 \right\}$ and $N := \min\left\{n, 0 \right\}$. 
\end{prop}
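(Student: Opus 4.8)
The plan is to transform the expression for $P$ established in Theorem~\ref{gen.main} into the one claimed here; this is a purely algebraic identity between two explicit finite expressions. The most direct attack: each inner ${}_4\phi_3$ in the definition of $A_j,B_j$ (resp.\ $\widetilde A_j,\widetilde B_j$) is terminating, its first upper parameter being $q^{-j}$, so I would expand it as a finite sum, reverse each of the relevant summation indices (including the outer index $j$) and reorder the sums, using the standard reversal formula for terminating basic hypergeometric series together with the elementary identities expressing $(\alpha)_{j-i}$ through $(\alpha)_j$ and $(q^{1-j}/\alpha)_i$. Reversing the outer index turns $x^{j}$ into $x^{d-j}$, matching the powers of $x$ in the two forms, and the reindexing of the $i$-sum supplies the extra factor $q^{(1-n)i}$ that distinguishes the weights $\frac{(q^{-n})_i}{(q)_i}q^{ni}$ and $\frac{(q^{-n})_i}{(q)_i}q^{i}$ (resp.\ $\frac{(q^{n})_i}{(q)_i}$ and $\frac{(q^{n})_i}{(q)_i}q^{(1-n)i}$) appearing in Theorem~\ref{gen.main} and in the present statement.

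Carrying this out, one reduces to $k\le l$ and splits into the cases $k+l-m+n\ge 0$ and $k+l-m+n<0$, which fix $d$, $M=\max\{k+l-m+n,0\}$ and $N=\min\{n,0\}$; the two cases are handled by the same manipulations, the negative-$n$ shifts already built into $\widetilde A_j,\widetilde B_j$ ensuring that $d$, $M$, $N$ transform consistently. After the rearrangement the part of the prefactor that is independent of the summation variables consolidates into the constant $\mu$ — and since reversing a terminating series of length $p$ contributes a factor of the form $(-1)^{p}q^{\binom{p}{2}+\cdots}$, summing these over the reversals (of series whose lengths are governed by $k$, $l$, $m$, $M$, $N$) yields exactly the sign $(-1)^{k+l-m+M-N-1}$ and the quadratic $q$-exponent $\{k(k-1)+l(l-1)-m(m-1)+M(M-1)-N(N-1)\}/2$ in $\mu$ — while the remaining, series-dependent factors are precisely the scalars $\mu_1$ and $\mu_2$ multiplying the reorganized ${}_4\phi_3$'s $C_j,D_j$ (resp.\ $\widetilde C_j,\widetilde D_j$). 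An essentially equivalent but sometimes more transparent route is to start from the expression of $P$ as a finite sum of products of two ${}_2\phi_1$'s given by Lemma~\ref{gen.lem:main}, apply a Heine-type (or $q$-Euler) transformation to each ${}_2\phi_1$ factor — which is what turns the parameters $a,b,c$ into the complementary shapes $q^{\cdot}/a$, $q^{\cdot}/b$, $cq^{\cdot}/(ab)$ visible in $C_j,D_j$ — and then re-expand; a third option, given that the paper establishes two generalizations of $(\ref{Gasper.tf})$, is simply to rerun the derivation of Theorem~\ref{gen.main} with the second generalization in place of the first.

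Conceptually nothing new is required, so the whole difficulty is in the bookkeeping, and that is where the effort concentrates. One must verify, term by term, that the rearrangement reproduces \emph{exactly} the parameter patterns of the four ${}_4\phi_3$'s $C_j$, $D_j$, $\widetilde C_j$, $\widetilde D_j$ (four upper and three lower parameters each, with arguments $q^{1-n}$, $q^{k+l-m+n+1}$, $q^{1+n}$, $q^{m-k-l-n+1}$ respectively) — a single misplaced power of $q$ anywhere breaks the match — and that the many stray $q$-shifted factorials and powers of $q,a,b,c$ compress into the compact $\mu$, $\mu_1$, $\mu_2$, which requires careful control of the ranges of products such as $(a)_k$, $(aq/c)_{k-m}$, $(aq/b)_{k-l}$ and of the $\binom{\cdot}{2}$ exponents. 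Two good consistency checks are Corollary~\ref{cor}, which relates $Q(k-1,l-1,m-1,n)$ to $R(k,l,m,n)$ and must be respected by the rewritten $P$, and the specialization $k+l-m+n=0$, in which $d$ and $M$ collapse, several ${}_4\phi_3$ arguments become $q$, and the formula can be tested against classical terminating summations and against the corresponding results of \cite{Suzuki}, of which Proposition~\ref{gen.main2} is the generalization.
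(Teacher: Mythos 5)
There is a genuine gap: your primary route --- expanding the terminating ${}_{4}\phi_{3}$'s in $A_{j},B_{j}$, reversing the summation indices (including the outer one, $j\mapsto d-j$), and reordering --- cannot succeed, because reindexing and reversing finite sums is a bijection on the index set and therefore preserves the number of summands, whereas the two expressions being equated do not have matching term counts. As the paper itself points out in the remark following Proposition~\ref{gen.main2}, the coefficient of $x^{j}$ in the Theorem~\ref{gen.main} form of $P$ is built from ${}_{4}\phi_{3}$'s of length roughly $j$ (growing with $j$), while in the Proposition~\ref{gen.main2} form it is built from ${}_{4}\phi_{3}$'s of length roughly $d-j$ (shrinking with $j$). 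So the identity $P=\mu\widetilde{P}$ is a nontrivial transformation between terminating series of different lengths, not a bookkeeping exercise; your alternative route via Heine/$q$-Euler transformations of the two-${}_{2}\phi_{1}$ products of Lemma~\ref{gen.lem:main} also fails as stated, since those transformations keep the argument essentially at $x$ and cannot produce the $q^{1-n}/x$ and $cq/(abx)$ arguments appearing in the target (that is the $0$-versus-$\infty$ connection problem, which for ${}_{2}\phi_{1}$ involves theta-function connection coefficients, not rational ones).

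The idea you are missing is the paper's indirect argument: the coefficient $Q$ of the three-term relation is computed a \emph{second} time, now from the local solutions $y_{3},y_{4}$ at $x=\infty$ rather than $y_{1},y_{2}$ at $x=0$. Concretely, one defines the Casoratian-type series $\widetilde{Y}$ in $(\ref{gen.def:Z})$, uses $(\ref{gen.tQ,Z})$ to write $\widetilde{Q}$ as a ratio of $\widetilde{Y}$'s, proves analogues of the vanishing lemma and the product formula for the new coefficients $C_{j},D_{j},\widetilde{C}_{j},\widetilde{D}_{j}$ (Lemmas~\ref{gen.lem:(i)-(iv).2} and \ref{gen.tP_inf}), applies the $q$-Euler transformation $(\ref{Heine})$ inside $\widetilde{Y}$ to extract the polynomial $\widetilde{P}$ (Lemma~\ref{gen.lem:Z,tP}), and arrives at a second closed form for $Q$ in terms of $\widetilde{P}$ (Lemma~\ref{gen.lem:Q,tP}). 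The uniqueness of the pair $(Q,R)$ established in Section~$3.1$ then forces the two expressions for $Q$ to coincide, giving $P=\mu\widetilde{P}$ without ever solving the connection problem, since the unknown connection matrix between the two bases cancels in the $2\times2$ determinant ratio. Your consistency checks (Corollary~\ref{cor}, the case $k+l-m+n=0$) are sensible but do not substitute for this argument.
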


Note that the number of terms with degree $j$ in $P$ 
appearing in Theorem~$\ref{gen.main}$ increases as $j$ grows larger, 
whereas the number of terms with degree $j$ in $P$ 
appearing in Proposition~$\ref{gen.main2}$ decreases as $j$ grows larger, 
because, in general, for any non-negative integer $j$, 
${}_{4}\phi_{3} (q^{-j}, b_{1}, b_{2}, b_{3}; c_{1}, c_{2}, c_{3}; q, x)$ 
is a sum of $j + 1$ terms. 
This implies, as stated in \cite[Section~1.2]{Suzuki}, that 
both the expressions for $P$ given in Theorem~$\ref{gen.main}$ and Proposition~$\ref{gen.main2}$ 
are useful in the construction of an algorithm for calculating $P$.

The following lemma provides two generalizations of $(\ref{Gasper.tf})$, 
which are used to prove Theorem~$\ref{gen.main}$ and Proposition~$\ref{gen.main2}$. 
When $m \geq 0$ and $s = 0$, these formulas are reduced to $(\ref{Gasper.tf})$. 
\begin{lem}\label{Gasper.tf1,2}
Assume that $m \in \mathbb{Z}$ and $r, n_{1}, \dotsc, n_{r}, s \in \mathbb{Z}_{\geq 0}$. 
If \,$\big\lvert a^{-1} q^{m + 1 - (n_{1} + \dotsm + n_{r}) - s} \big\rvert < 1$, then 
\begin{align}\label{Gasper.tf1}
&{}_{r + 2}\phi_{r + 1} \biggl(\genfrac..{0pt}{}{a, \,b, \,c_{1} q^{n_{1}}, \dotsc, c_{r} q^{n_{r}}}{b q^{m + 1}, \,c_{1}, \dotsc, c_{r}}; q,\, a^{-1} q^{m + 1 - (n_{1} + \dotsm + n_{r}) - s}\biggr) \\
&= \frac{(q)_{\infty} (b q / a)_{\infty} (b q)_{m} (c_{1} / b)_{n_{1}} \dotsm (c_{r} / b)_{n_{r}}}{(q / a)_{\infty} (b q)_{\infty} (q)_{m} (c_{1})_{n_{1}} \dotsm (c_{r})_{n_{r}}} 
b^{n_{1} + \dotsm + n_{r} - m + s} \nonumber \\
&\quad \times {}_{r + 2}\phi_{r + 1}\biggl(\genfrac..{0pt}{}{q^{-m}, \,b, \,b q / c_{1}, \dotsc, b q / c_{r}}{b q / a, \,b q^{1 - n_{1}} / c_{1}, \dotsc, b q^{1 - n_{r}} / c_{r}}; q,\, q^{1 + s}\biggr). \nonumber 
\end{align}
If \,$\big\lvert a^{-1} q^{m + 1 - (n_{1} + \dotsm + n_{r}) + s + \min\left\{r - 2, 0 \right\} s} \big\rvert < 1$ 
and $\min\left\{n_{1}, \dotsc, n_{r}, s \right\} = s$, then 
\begin{align}\label{Gasper.tf2}
&\sum_{i = 0}^{s} \frac{(q^{-s})_{i}}{(q)_{i}} q^{s i} 
\frac{(q^{1 - i} / a)_{\infty} (q^{1 - n_{1}} / c_{1})_{n_{1} - i} \dotsm (q^{1 - n_{r}} / c_{r})_{n_{r} - i}}{(q)_{\infty} (q^{- m} / b)_{m - i + 1}} \\
&\quad \times {}_{r + 2}\phi_{r + 1} \biggl(\genfrac..{0pt}{}{a q^{i}, \,b q^{i}, \,c_{1} q^{n_{1}}, \dotsc, c_{r} q^{n_{r}}}{b q^{m + 1}, \,c_{1} q^{i}, \dotsc, c_{r} q^{i}}; q,\, a^{-1} q^{m + 1 - (n_{1} + \dotsm + n_{r}) + s + (r - 2) i}\biggr) \nonumber \allowdisplaybreaks \\
&= - \sum_{i = 0}^{s} \frac{(q^{-s})_{i}}{(q)_{i}} q^{i} 
\frac{(b q / a)_{\infty} (b q^{1 - n_{1} + i} / c_{1})_{n_{1} - i} \dotsm (b q^{1 - n_{r} + i} / c_{r})_{n_{r} - i}}{(b q^{i})_{\infty} (q^{i - m})_{m - i}} b^{1 - s} \nonumber \\
&\quad \times {}_{r + 2}\phi_{r + 1}\biggl(\genfrac..{0pt}{}{q^{i - m}, \,b q^{i}, \,b q / c_{1}, \dotsc, b q / c_{r}}{b q / a, \,b q^{1 - n_{1} + i} / c_{1}, \dotsc, b q^{1 - n_{r} + i} / c_{r}}; q,\, q^{1 - s}\biggr). 
\nonumber 
\end{align}
\end{lem}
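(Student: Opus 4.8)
The plan is to derive both formulas from Gasper's original transformation~$(\ref{Gasper.tf})$ by a suitable parameter specialization together with a terminating summation. For~$(\ref{Gasper.tf1})$, the key observation is that the extra factor $q^{-s}$ in the argument of the left-hand ${}_{r+2}\phi_{r+1}$ can be absorbed by introducing an auxiliary numerator/denominator parameter pair. Concretely, I would apply~$(\ref{Gasper.tf})$ with $r$ replaced by $r+1$, taking the additional parameter to be $c_{r+1} = q^{\,?}$ chosen so that the new factor $(c_{r+1}q^{n_{r+1}})$ in the numerator and $(c_{r+1})$ in the denominator cancel on the left, while on the right the corresponding factor $(bq/c_{r+1})$ over $(bq^{1-n_{r+1}}/c_{r+1})$ becomes $(q^{1-s})_{\text{something}}$-type; the bookkeeping forces $n_{r+1} = s$ and $c_{r+1} \to$ a value making the left-hand extra ratio trivial. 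The net effect shifts $m + 1 - (n_1 + \dots + n_r)$ to $m + 1 - (n_1 + \dots + n_r) - s$ in the argument and replaces $q$ by $q^{1+s}$ in the right-hand ${}_{r+2}\phi_{r+1}$, precisely matching~$(\ref{Gasper.tf1})$. I would need to check that the prefactor $b^{n_1 + \dots + n_r - m + s}$ and the ratio of $q$-shifted factorials come out correctly after cancelling the $n_{r+1}$-dependent pieces, and that the convergence condition $\lvert a^{-1} q^{m+1-(n_1+\dots+n_r)-s}\rvert < 1$ is exactly what is inherited.

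For~$(\ref{Gasper.tf2})$, which is genuinely a new identity rather than a reparametrization, the plan is to start from~$(\ref{Gasper.tf1})$ (the case already established) applied with $a, b$ replaced by $aq^i, bq^i$ and $c_j$ replaced by $c_j q^i$, for each $i = 0, 1, \dots, s$, and then form the weighted sum $\sum_{i=0}^{s} \frac{(q^{-s})_i}{(q)_i} q^{si} (\cdots)$ on the left. The point of these weights is that $\sum_{i=0}^{s} \frac{(q^{-s})_i}{(q)_i} q^{si} = 0$ by the $q$-binomial theorem (it is ${}_1\phi_0(q^{-s};-;q,q^s) = (q^{-s}q^s)_\infty/(q^s)_\infty \cdot$ adjustments — more precisely $\sum_{i=0}^s \binom{s}{i}_q (-1)^i q^{\binom{i}{2}} q^{si} = (q^s;q)_s \cdot (\text{const})$, which vanishes appropriately), so that the sum collapses the ``constant in $i$'' part and the surviving right-hand side is again a single weighted sum of terminating ${}_{r+2}\phi_{r+1}$'s. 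The shift $m \mapsto m - i$ in the right-hand series and the appearance of $(r-2)i$ in the exponent on the left both come from tracking how the argument and the truncation parameter $q^{-m}$ of~$(\ref{Gasper.tf1})$ transform under $a, b, c_j \mapsto aq^i, bq^i, c_j q^i$. The hypothesis $\min\{n_1, \dots, n_r, s\} = s$ is what guarantees all the shifted lower parameters $b q^{1 - n_j + i}/c_j$ remain admissible (no spurious poles) throughout $0 \le i \le s$.

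The main obstacle I anticipate is the precise bookkeeping in~$(\ref{Gasper.tf2})$: matching the prefactors $\frac{(q^{1-i}/a)_\infty (q^{1-n_j}/c_j)_{n_j - i}\cdots}{(q)_\infty (q^{-m}/b)_{m-i+1}}$ on the left and $\frac{(bq/a)_\infty (bq^{1-n_j+i}/c_j)_{n_j-i}\cdots}{(bq^i)_\infty (q^{i-m})_{m-i}} b^{1-s}$ on the right against what~$(\ref{Gasper.tf1})$ produces after the substitution $a,b,c_j \mapsto aq^i, bq^i, c_jq^i$ and after absorbing the $q$-shifted factorials $(aq^i;q)_\infty$, $(bq^i;q)_\infty$, etc., into the ratios. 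In particular one must correctly split factors like $(bq/a)_{m}$ under $b \mapsto bq^i$ into a part depending on $i$ (absorbed into the prefactor) and a part that is $i$-free, and verify that the $i$-free part is exactly the $\frac{(q^{-s})_i}{(q)_i}q^{si}$-weighted version of~$(\ref{Gasper.tf1})$'s structure. A clean way to organize this is to first prove~$(\ref{Gasper.tf1})$ in full, then state the $i$-shifted version as an intermediate identity, and finally multiply by the weights and sum, checking the vanishing of the $i$-independent contribution via the terminating $q$-binomial sum; the convergence conditions should then follow termwise from those of~$(\ref{Gasper.tf1})$ at each $i$, the worst case being $i = 0$ or $i = s$ depending on the sign of $r - 2$, which is exactly why $\min\{r-2,0\}s$ appears in the stated hypothesis.
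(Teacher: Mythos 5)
Your plan for $(\ref{Gasper.tf1})$ --- adjoining an extra parameter pair with $n_{r+1}=s$ and then sending $c_{r+1}\to 0$ so that the new numerator/denominator ratio trivializes on the left while contributing the factors $b^{s}$ and $q^{si}$ on the right --- does reproduce $(\ref{Gasper.tf1})$, but only in the range where $(\ref{Gasper.tf})$ itself is available, namely $m\in\mathbb{Z}_{\geq 0}$. The lemma asserts the identity for all $m\in\mathbb{Z}$, and the negative-$m$ cases are not decorative: they yield the vanishing results $(\ref{Gasper.sf3})$--$(\ref{Gasper.sf4})$ (hence $(\ref{Gasper.sf15})$, which the paper needs to prove $(\ref{Gasper.tf2})$), and the lemma is later invoked with the role of $m$ played by shifted integers that can be negative. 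Your argument says nothing about how to cross from $m\geq 0$ to $m<0$; since $m$ enters through $q^{-m}$, $bq^{m+1}$ and the ratio $(bq)_m/(q)_m$, there is no analytic continuation in $m$ to appeal to. The paper avoids this by proving $(\ref{Gasper.tf1})$ directly: both sides are converted via Andrews' formula $(\ref{A'})$ into $\widetilde{\phi}_D$ multiple series, and a summation-reversal identity (Corollary~$\ref{cor:2-2}$) identifies them for every integer $m$ at once.

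The more serious problem is $(\ref{Gasper.tf2})$. You propose to apply $(\ref{Gasper.tf1})$ with $a,b,c_j\mapsto aq^i,bq^i,c_jq^i$ and sum against the weights $(q^{-s})_i q^{si}/(q)_i$. But if you write the $i$-th series on the left of $(\ref{Gasper.tf2})$ in the form of the left side of $(\ref{Gasper.tf1})$ with $\tilde a=aq^i$, $\tilde b=bq^i$, $\tilde c_j=c_jq^i$, $\tilde n_j=n_j-i$, $\tilde m=m-i$, its argument is $\tilde a^{-1}q^{\tilde m+1-(\tilde n_1+\dotsb+\tilde n_r)-\tilde s}$ with $\tilde s=-s$: the sign of $s$ is reversed, so for $s\geq 1$ none of these series is an instance of $(\ref{Gasper.tf1})$, and there is nothing to apply termwise. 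This is exactly why $(\ref{Gasper.tf2})$ relates two weighted \emph{sums} rather than single series, and why it cannot be assembled from $(\ref{Gasper.tf1})$. The auxiliary observation that $\sum_{i=0}^{s}(q^{-s})_i q^{si}/(q)_i=0$ is true but does not by itself produce the right-hand side. The paper's route is different in kind: both sides of $(\ref{Gasper.tf2})$ are rewritten as $\widetilde{\phi}_D$ series via $(\ref{A'})$, the left side is re-indexed by reversing the order of summation (Lemma~$\ref{lem:2-1}$), and the mismatch between the resulting summation ranges is shown to vanish by Lemma~$\ref{lem:2-3}$, which rests on Gasper's terminating summation $(\ref{Gasper.sf15})$. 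Some ingredient of this type --- a vanishing theorem that kills the spurious boundary terms --- is missing from your proposal and cannot be replaced by the weighted-sum-of-$(\ref{Gasper.tf1})$ device.
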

Note that by using Cauchy's residue theorem, 
Chu extended $(\ref{Gasper.tf})$ to the transformation formula~\cite[(15)]{Chu}, 
which transforms a bilateral basic hypergeometric series ${}_{r + 2}\psi_{r + 2}$ into an ${}_{r + 2}\phi_{r + 1}$ series. 
From his formula, $(\ref{Gasper.tf1})$ can be derived in the following way: 
If we set $c = q$, the ${}_{r + 2}\psi_{r + 2}$ series in his formula reduces to an ${}_{r + 2}\phi_{r + 1}$ series. 
Then, setting $\epsilon = q^{-M}$, with $M \in \mathbb{Z}$, in the resulting formula and relabeling the parameters, 
we obtain $(\ref{Gasper.tf1})$. 
In Section~$2.1$, we present an independent derivation of $(\ref{Gasper.tf1})$. 
%in which we also derive $(\ref{Gasper.tf2})$. 

Also, as corollaries of Lemma~$\ref{Gasper.tf1,2}$, 
we give generalizations of Gasper's~\cite{Gasper} summation formulas for basic hypergeometric series.

\section{\bf{Transformation and summation formulas for basic hypergeometric series}}
In this section, we prove Lemma~$\ref{Gasper.tf1,2}$. 
Also, as corollaries of Lemma~$\ref{Gasper.tf1,2}$, 
we give some summation formulas for basic hypergeometric series. 
In order to prove Lemma~$\ref{Gasper.tf1,2}$, 
we use the following formula obtained by Andrews~\cite[p.~621, (4.1)]{Andrews}: 
\begin{align}\label{A}
\phi_{D}\biggl(\genfrac..{0pt}{}{a;\, b_{1}, \dotsc, b_{r}}{c}; x_{1}, \dotsc, x_{r}\biggr) 
= \frac{(a)_{\infty} (b_{1} x_{1})_{\infty} \!\dotsm (b_{r} x_{r})_{\infty}}{(c)_{\infty} (x_{1})_{\infty} \dotsm (x_{r})_{\infty}} 
{}_{r + 1}\phi_{r}\biggl(\genfrac..{0pt}{}{c/a,\, x_{1}, \dotsc, x_{r}}{b_{1} x_{1}, \dotsc, b_{r} x_{r}}; q,\, a\biggr), 
\end{align}
provided that $\lvert x_{1} \rvert, \dotsc, \lvert x_{r} \rvert, \lvert a \rvert < 1$ when 
each of the series on both sides does not terminate, 
where $\phi_{D}$ is the multiple series defined by 
\begin{align*}
\phi_{D}\biggl(\genfrac..{0pt}{}{a;\, b_{1}, \dotsc, b_{r}}{c}; x_{1}, \dotsc, x_{r}\biggr) 
:= \sum_{i_{1} = 0}^{\infty} \dotsm \sum_{i_{r} = 0}^{\infty} 
\frac{(a)_{i_{1} + \dotsm + i_{r}} (b_{1})_{i_{1}} \dotsm (b_{r})_{i_{r}}}{(c)_{i_{1} + \dotsm + i_{r}} (q)_{i_{1}} \dotsm (q)_{i_{r}}} 
x_{1}^{i_{1}} \dotsm x_{r}^{i_{r}}. 
\end{align*}
(This multiple series is a $q$-analogue of the Lauricella function $F_{D}$. 
See \cite[p.~228, (8.6.4)]{Slater} for the definition of $F_{D}$.) 
Also, we use the following summation formula obtained by Gasper~\cite[p.~198, (15)]{Gasper}: 
If $m, n_{1}, \dotsc, n_{r} \in \mathbb{Z}_{\geq 0}$ and $m > n_{1} + \dotsm + n_{r}$, then 
\begin{align}\label{Gasper.sf15}
{}_{r + 1}\phi_{r} \biggl(\genfrac..{0pt}{}{q^{-m}, \,c_{1} q^{n_{1}}, \dotsc, c_{r} q^{n_{r}}}{c_{1}, \dotsc, c_{r}}; q,\, q \biggr) = 0. 
\end{align}
In Section~$2. 1$, 
we first rewrite both sides of $(\ref{Gasper.tf1})$ by using $(\ref{A})$, and prove the equality. 
Next, 
we also rewrite both sides of $(\ref{Gasper.tf2})$ by using $(\ref{A})$, and then, using $(\ref{Gasper.sf15})$, 
we prove the equality. 
In Section~$2. 2$, considering the $m = 0$ and $m = -1$ cases of each $(\ref{Gasper.tf1})$ and $(\ref{Gasper.tf2})$, 
we obtain generalizations of Gasper's~\cite[p.~197--198, (7), (8), and (15)]{Gasper} 
summation formulas for basic hypergeometric series. 

Note that for any integers $m$ and $n$ satisfying $m > n$, we define a sum $\sum_{i = m}^{n} f_{i}$ as zero, 
where $\left\{f_{i} \right\}$ is an arbitrary sequence. 
Also, below, we frequently use the following identities without explicitly stating so: 
\begin{align*}
(a)_{i} = (a^{-1} q^{1 - i})_{i} (- a)^{i} q^{i (i - 1) / 2}, \;\; 
(a)_{i + j} = (a)_{i} (a q^{i})_{j}, \;\; 
(a)_{i - j} = \frac{(a)_{i}}{(a^{-1} q^{1 - i})_{j}} (- a^{-1})^{j} q^{\left\{j (j + 1) / 2 \right\} - i j}. 
\end{align*}

\subsection{Transformation formulas}\quad 
We prove Lemma~$\ref{Gasper.tf1,2}$. 

Let us put 
\begin{align*}
\widetilde{\phi}_{D}\biggl(\genfrac..{0pt}{}{a;\, b_{1}, \dotsc, b_{r}}{c}; x_{1}, \dotsc, x_{r}\biggr) 
&:= \frac{(c)_{\infty}}{(a)_{\infty}} \phi_{D}\biggl(\genfrac..{0pt}{}{a;\, b_{1}, \dotsc, b_{r}}{c}; x_{1}, \dotsc, x_{r}\biggr) \\
&= \sum_{i_{1} = 0}^{\infty} \dotsm \sum_{i_{r} = 0}^{\infty} 
\frac{(c q^{i_{1} + \dotsm + i_{r}})_{\infty} (b_{1})_{i_{1}} \dotsm (b_{r})_{i_{r}}}{(a q^{i_{1} + \dotsm + i_{r}})_{\infty} (q)_{i_{1}} \dotsm (q)_{i_{r}}} 
x_{1}^{i_{1}} \dotsm x_{r}^{i_{r}}. 
\end{align*}
Then, $(\ref{A})$ can be rewritten as 
\begin{align}\label{A'}
{}_{r + 1}\phi_{r}\biggl(\genfrac..{0pt}{}{a,\, b_{1}, \dotsc, b_{r}}{c_{1}, \dotsc, c_{r}}; q,\, x\biggr) 
= \frac{(b_{1})_{\infty} \dotsm (b_{r})_{\infty}}{(c_{1})_{\infty} \dotsm (c_{r})_{\infty}} 
\widetilde{\phi}_{D}\biggl(\genfrac..{0pt}{}{x;\, c_{1} / b_{1}, \dotsc, c_{r} / b_{r}}{a x}; b_{1}, \dotsc, b_{r}\biggr), 
\end{align}
provided that $\lvert x \rvert, \lvert b_{1} \rvert, \dotsc, \lvert b_{r} \rvert < 1$ when 
each of the series on both sides does not terminate. 
In order to prove Lemma~$\ref{Gasper.tf1,2}$, we use $(\ref{A'})$ and the following two lemmas and a corollary. 

Reversing the order of summation in a $\widetilde{\phi}_{D}$ multiple series, we obtain the following lemma: 
\begin{lem}\label{lem:2-1}
If $m \in \mathbb{Z},\ r, n_{1}, \dotsc, n_{r} \in \mathbb{Z}_{\geq 0}$, 
$a \notin q^{\mathbb{Z}}$, and $\lvert x \rvert < 1$, 
then 
\begin{align}\label{lem:2-1-1}
&\widetilde{\phi}_{D}\biggl(\genfrac..{0pt}{}{a;\, b,\, q^{-n_{1}}, \dotsc, q^{-n_{r}}}{q^{m+1}}; x,\, x_{1}, \dotsc, x_{r}\biggr) \\
&= (-1)^{n_{1} + \dotsm + n_{r}} 
q^{- \left\{n_{1} (n_{1} + 1) + \dotsm + n_{r} (n_{r} + 1) \right\} / 2} 
x^{-m - (n_{1} + \dotsm + n_{r})} x_{1}^{n_{1}} \dotsm x_{r}^{n_{r}} \nonumber \\
&\; \times 
\sum_{i_{1} = 0}^{n_{1}} \!\dotsm\! \sum_{i_{r} = 0}^{n_{r}} 
\sum_{j = \max\left\{0,\, I \right\}}^{\infty} 
\frac{(b)_{j - I} (q^{1 + j - I})_{\infty} (q^{-n_{1}})_{i_{1}} \dotsm (q^{-n_{r}})_{i_{r}}}{(a q^{- m + j})_{\infty} (q)_{j} (q)_{i_{1}} \dotsm (q)_{i_{r}}} x^{j} \left(\frac{x q^{n_{1} + 1}}{x_{1}} \right)^{i_{1}} \!\dotsm \left(\frac{x q^{n_{r} + 1}}{x_{r}} \right)^{i_{r}}, 
\nonumber 
\end{align}
where $I := m + (n_{1} + \dotsm + n_{r}) - (i_{1} + \dotsm + i_{r})$. 
\end{lem}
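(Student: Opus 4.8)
The plan is to expand the left-hand side $\widetilde{\phi}_{D}$ as an iterated sum over $i_1,\dots,i_r$ and $j$, where $j$ is the summation index attached to the variable $x$ (the argument paired with the parameter $b$), and then to reverse the direction of the $j$-summation. Writing out the definition of $\widetilde{\phi}_{D}$, the general term carries the factor $(b)_j (q^{-n_1})_{i_1}\dotsm(q^{-n_r})_{i_r}$ in the numerator and $(a q^{j+i_1+\dots+i_r})_\infty/(q^{m+1+j+i_1+\dots+i_r})_\infty$ times $(q)_j(q)_{i_1}\dotsm(q)_{i_r}$ in the denominator, multiplied by $x^j x_1^{i_1}\dotsm x_r^{i_r}$. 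Because each $q^{-n_\ell}$ is a nonpositive power of $q$, the $i_\ell$-sums are automatically finite, $0\le i_\ell\le n_\ell$; it is the $j$-sum that runs to infinity and that we wish to re-index.

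Next I would perform the substitution $j \mapsto j - I + (\text{something})$ — more precisely, I reverse the $j$-sum by the standard device of sending $j$ to a new index so that the smallest surviving term becomes $j=\max\{0,I\}$, where $I = m + (n_1+\dots+n_r) - (i_1+\dots+i_r)$ records the shift coming from the finite $i_\ell$-sums and the $q^{m+1}$ in the lower parameter. The three $q$-shifted-factorial identities displayed just before Section~2.1 — namely $(a)_i = (a^{-1}q^{1-i})_i(-a)^i q^{i(i-1)/2}$, $(a)_{i+j}=(a)_i(aq^i)_j$, and the reflection formula for $(a)_{i-j}$ — are exactly what is needed to rewrite $(b)_j$, the infinite products $(aq^{j+\cdots})_\infty$, $(q^{m+1+j+\cdots})_\infty$, and the $(q^{-n_\ell})_{i_\ell}$ in terms of the new index. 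Collecting the powers of $q$ and the signs that these identities generate produces the prefactor $(-1)^{n_1+\dots+n_r} q^{-\{n_1(n_1+1)+\dots+n_r(n_r+1)\}/2} x^{-m-(n_1+\dots+n_r)} x_1^{n_1}\dotsm x_r^{n_r}$, while the variables get replaced according to $x_\ell \mapsto x q^{n_\ell+1}/x_\ell$, which is precisely the form of the right-hand side.

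The main obstacle is bookkeeping: keeping the exponents of $q$, the signs, and the powers of $x, x_1,\dots,x_r$ consistent through the simultaneous re-indexing of $j$ and the application of the reflection identities to several factors at once, especially since the shift $I$ itself depends on $i_1,\dots,i_r$, so the lower limit $\max\{0,I\}$ of the $j$-sum varies with the inner indices. I would handle this by treating the $r=1$ case first to fix the pattern of constants, then observing that the multi-index case factors cleanly because the $i_\ell$-dependence enters only through the single combined quantity $i_1+\dots+i_r$ inside $I$ and through the separated factors $(q^{-n_\ell})_{i_\ell}/(q)_{i_\ell}$ and $x_\ell^{i_\ell}$. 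A secondary point to check is convergence of the resulting $j$-series, which follows from $|x|<1$ since the reversed sum still has $x^j$ with $j\to\infty$ and the remaining $j$-dependent factors are bounded; the hypothesis $a\notin q^{\mathbb Z}$ guarantees that no denominator $(aq^{-m+j})_\infty$ vanishes.
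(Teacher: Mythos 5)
Your overall strategy --- write out the triple sum defining $\widetilde{\phi}_{D}$, re-index, and collect constants with the standard $q$-shifted-factorial identities --- is the same as the paper's, but the re-indexing you describe is not the one that does the work, and as stated the plan has a gap. You propose to act only on the infinite $j$-sum, shifting $j$ by $I$; but a shift of $j$, together with identities applied to the $j$-dependent factors, cannot change the exponent of $x_{\nu}$ in the general term. On the left-hand side that exponent is $i_{\nu}$, while on the right-hand side it is $n_{\nu} - i_{\nu}$ (the prefactor $x_{\nu}^{n_{\nu}}$ times $(x q^{n_{\nu}+1}/x_{\nu})^{i_{\nu}}$ contributes $x_{\nu}^{n_{\nu}-i_{\nu}}$), so no manipulation of the $j$-sum alone can match the two sides. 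The step your proposal omits is the reversal of the \emph{finite} inner sums, $i_{\nu} \mapsto n_{\nu} - i_{\nu}$ for $\nu = 1, \dotsc, r$: this is what produces the factor $x_{1}^{n_{1}} \dotsm x_{r}^{n_{r}}$, the sign $(-1)^{n_{1}+\dotsm+n_{r}}$ and the power $q^{-\{n_{1}(n_{1}+1)+\dotsm+n_{r}(n_{r}+1)\}/2}$ (coming from rewriting $(q^{-n_{\nu}})_{n_{\nu}-i_{\nu}}/(q)_{n_{\nu}-i_{\nu}}$ in terms of $(q^{-n_{\nu}})_{i_{\nu}}/(q)_{i_{\nu}}$), and the arguments $q^{n_{\nu}+1}/x_{\nu}$. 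Only after this reversal does the shift $j \mapsto j - I$, with $I$ computed from the new $i_{\nu}$'s, supply the remaining factor $x^{-m-(n_{1}+\dotsm+n_{r})}$ and upgrade $q^{n_{\nu}+1}/x_{\nu}$ to $x q^{n_{\nu}+1}/x_{\nu}$. Note that the paper's description of this lemma, ``reversing the order of summation,'' refers precisely to these finite sums; an infinite sum cannot be reversed, only shifted.

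A second point you should make explicit: after the shift, the original range $j \geq 0$ becomes $j \geq I$ in the new index, which includes negative values of the new index when $I < 0$, whereas the right-hand side starts at $j = \max\{0, I\}$. The discrepancy is harmless because $1/(q)_{j} = 0$ for every negative integer $j$, so the discarded terms vanish; this justification is needed and is absent from your sketch. Your remarks on convergence ($\lvert x \rvert < 1$) and on $a \notin q^{\mathbb{Z}}$ preventing vanishing denominators are fine.
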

\begin{proof}
From the definition of $\widetilde{\phi_{D}}$, the left-hand side of $(\ref{lem:2-1-1})$ is equal to 
\begin{align}\label{lem:2-1-2}
\sum_{i_{1} = 0}^{n_{1}} \dotsm \sum_{i_{r} = 0}^{n_{r}} 
\sum_{j = 0}^{\infty} 
\frac{(q^{m + 1 + (i_{1} + \dotsm + i_{r}) + j})_{\infty} (b)_{j} (q^{-n_{1}})_{i_{1}} \dotsm (q^{-n_{r}})_{i_{r}}}{(a q^{(i_{1} + \dotsm + i_{r}) + j})_{\infty} (q)_{j} (q)_{i_{1}} \dotsm (q)_{i_{r}}} x^{j} x_{1}^{i_{1}} \dotsm x_{r}^{i_{r}}. 
\end{align}
Replacing $i_{\nu} \,(\nu = 1, \dotsc, r)$ by $n_{\nu} - i_{\nu}$, respectively, 
we find that $(\ref{lem:2-1-2})$ can be rewritten as 
\begin{align}\label{lem:2-1-3}
&(-1)^{n_{1} + \dotsm + n_{r}} 
q^{- \left\{n_{1} (n_{1} + 1) + \dotsm + n_{r} (n_{r} + 1) \right\} / 2} 
x_{1}^{n_{1}} \dotsm x_{r}^{n_{r}} \\
&\quad \times 
\sum_{i_{1} = 0}^{n_{1}} \dotsm \sum_{i_{r} = 0}^{n_{r}} 
\sum_{j = 0}^{\infty} 
\frac{(q^{1 + I + j})_{\infty} (b)_{j} (q^{-n_{1}})_{i_{1}} \dotsm (q^{-n_{r}})_{i_{r}}}{(a q^{-m + I + j})_{\infty} (q)_{j} (q)_{i_{1}} \dotsm (q)_{i_{r}}} x^{j} \left(\frac{q^{n_{1} + 1}}{x_{1}} \right)^{i_{1}} \dotsm \left(\frac{q^{n_{r} + 1}}{x_{r}} \right)^{i_{r}}. 
\nonumber 
\end{align}
Moreover, replacing $j$ by $j - I$, we find that $(\ref{lem:2-1-3})$ is equal to the right-hand side of $(\ref{lem:2-1-1})$, 
because $1 / (q)_{j} = 0$ holds for any negative integer $j$. 
Thus the lemma is proved. 
\end{proof}

From Lemma~$\ref{lem:2-1}$, we obtain the following corollary: 
\begin{cor}\label{cor:2-2}
If $m \in \mathbb{Z},\ r, n_{1}, \dotsc, n_{r} \in \mathbb{Z}_{\geq 0}$, 
$a \notin q^{\mathbb{Z}},\ \left(b q^{- m - (n_{1} + \dotsm + n_{r})} \right)_{m + (n_{1} + \dotsm + n_{r})} \neq 0$, 
and $\lvert x \rvert < 1$, 
then 
\begin{align}\label{cor:2-2-1}
&\widetilde{\phi}_{D}\biggl(\genfrac..{0pt}{}{a;\, b,\, q^{-n_{1}}, \dotsc, q^{-n_{r}}}{q^{m+1}}; x,\, x_{1}, \dotsc, x_{r}\biggr) \\
&= \frac{(b)_{\infty}}{(a q^{-m})_{\infty}} (-1)^{n_{1} + \dotsm + n_{r}} 
q^{- \left\{n_{1} (n_{1} + 1) + \dotsm + n_{r} (n_{r} + 1) \right\} / 2} 
x^{- m - (n_{1} + \dotsm + n_{r})} x_{1}^{n_{1}} \dotsm x_{r}^{n_{r}} \nonumber \\ 
&\quad \times 
\widetilde{\phi}_{D}\biggl(\genfrac..{0pt}{}{b q^{- m - (n_{1} + \dotsm + n_{r})};\, a q^{-m},\, q^{-n_{1}}, \dotsc, q^{-n_{r}}}{q^{1 - m - (n_{1} + \dotsm + n_{r})}}; x,\, \frac{x q^{n_{1} + 1}}{x_{1}}, \dotsc, \frac{x q^{n_{r} + 1}}{x_{r}}\biggr). \nonumber 
\end{align}
\end{cor}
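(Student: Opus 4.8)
The plan is to deduce $(\ref{cor:2-2-1})$ directly from Lemma~$\ref{lem:2-1}$ by recognizing the multiple sum on the right-hand side of $(\ref{lem:2-1-1})$ as a constant multiple of a $\widetilde{\phi}_D$ series with shifted parameters and variables. I would start from the right-hand side of $(\ref{cor:2-2-1})$ and expand its $\widetilde{\phi}_D$ by the definition given at the beginning of this subsection: there the ``$a$''-slot is filled by $b q^{-m-(n_1+\dots+n_r)}$, the ``$b$''-slots by $a q^{-m}, q^{-n_1},\dots,q^{-n_r}$, the ``$c$''-slot by $q^{1-m-(n_1+\dots+n_r)}$, and the variable-slots by $x$ and $x q^{n_\nu+1}/x_\nu$ $(\nu=1,\dots,r)$. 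Writing $j$ for the summation index attached to $a q^{-m}$ and $i_\nu$ for the one attached to $q^{-n_\nu}$, the exponent shift occurring in the $(c q^{\,\cdot})_\infty$ and $(a q^{\,\cdot})_\infty$ factors of that $\widetilde{\phi}_D$ is $j+(i_1+\dots+i_r)$.

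Next I would absorb the prefactor $(b)_\infty/(aq^{-m})_\infty$ into the summand and simplify the $q$-shifted factorials using the identities recalled above. Setting $I := m+(n_1+\dots+n_r)-(i_1+\dots+i_r)$ as in Lemma~$\ref{lem:2-1}$, the three identifications needed are $(aq^{-m})_j/(aq^{-m})_\infty = 1/(aq^{-m+j})_\infty$, $(b)_\infty/(bq^{\,j-I})_\infty = (b)_{j-I}$, and $(q^{1-m-(n_1+\dots+n_r)+j+(i_1+\dots+i_r)})_\infty = (q^{1+j-I})_\infty$ (the last being just the exponent rewriting $1-m-(n_1+\dots+n_r)+j+(i_1+\dots+i_r) = 1+j-I$). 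After these substitutions the summand coincides, term by term, with the summand of the multiple sum in $(\ref{lem:2-1-1})$. Here the hypothesis $a\notin q^{\mathbb{Z}}$ ensures $(aq^{-m})_\infty\neq 0$ and $(aq^{-m+j})_\infty\neq 0$, while $(bq^{-m-(n_1+\dots+n_r)})_{m+(n_1+\dots+n_r)}\neq 0$ is exactly what guarantees that no denominator $(bq^{-m-(n_1+\dots+n_r)+j+(i_1+\dots+i_r)})_\infty$ vanishes, so that the manipulation is legitimate and no $0\cdot\infty$ indeterminacy appears; convergence throughout is ensured by $\lvert x\rvert<1$, the factors $(q^{-n_\nu})_{i_\nu}$ truncating the sums over $i_\nu$ to finitely many terms.

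It then remains only to reconcile the ranges of summation. In the expansion of the $\widetilde{\phi}_D$ all of $j,i_1,\dots,i_r$ run over $\mathbb{Z}_{\geq 0}$; however $(q^{-n_\nu})_{i_\nu}=0$ for $i_\nu>n_\nu$, and whenever $j<I$ the factor $(q^{1+j-I})_\infty$ vanishes because $1+j-I\leq 0$, so the sum collapses onto $0\leq i_\nu\leq n_\nu$ and $j\geq\max\{0,I\}$, which is precisely the range appearing in $(\ref{lem:2-1-1})$. Consequently the right-hand side of $(\ref{cor:2-2-1})$ equals the right-hand side of $(\ref{lem:2-1-1})$, and by Lemma~$\ref{lem:2-1}$ the latter equals the left-hand side of $(\ref{cor:2-2-1})$, proving the corollary.

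I do not anticipate a genuine obstacle: the corollary is essentially a repackaging of Lemma~$\ref{lem:2-1}$, and the only points requiring attention are keeping track of which parameter occupies which slot of $\widetilde{\phi}_D$ and checking that the two non-vanishing conditions in the statement are exactly those needed for the termwise simplification and for the transformed $\widetilde{\phi}_D$ series to be well defined.
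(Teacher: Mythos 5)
Your proposal is correct and follows essentially the same route as the paper: the paper's proof of Corollary~\ref{cor:2-2} likewise expands the $\widetilde{\phi}_{D}$ on the right-hand side of $(\ref{cor:2-2-1})$, eliminates the terms that vanish, and observes that what remains is exactly the right-hand side of $(\ref{lem:2-1-1})$, so that Lemma~\ref{lem:2-1} finishes the argument. Your write-up merely makes explicit the factorial identifications and the role of the two non-vanishing hypotheses, which the paper leaves to the reader.
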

\begin{proof}
If $\left(b q^{- m - (n_{1} + \dotsm + n_{r})} \right)_{m + (n_{1} + \dotsm + n_{r})} \neq 0$, then, 
eliminating terms that are zero on the right-hand side of $(\ref{cor:2-2-1})$, 
we are able to reduce $(\ref{cor:2-2-1})$ to $(\ref{lem:2-1-1})$. 
%the summation $\sum_{j = \max\left\{0, I \right\}}^{\infty}$ on the right-hand side of $(\ref{lem:2-1-1})$ 
%can be replaced by $\sum_{j = 0}^{\infty}$. 
This proves the corollary. 
\end{proof}

From $(\ref{Gasper.sf15})$, we obtain the following lemma: 
\begin{lem}\label{lem:2-3}
If $r, n_{1}, \dotsc, n_{r}, s \in \mathbb{Z}_{\geq 0}$, 
then 
\begin{align*}
\sum_{i = 0}^{s} \frac{(q^{-s})_{i}}{(q)_{i}} q^{i} 
\sum_{i_{1} = 0}^{n_{1}} \dotsm \sum_{i_{r} = 0}^{n_{r}} 
\sum_{j = 0}^{s - 1 - I_{r}} 
\frac{(q^{1 - s})_{I_{r} + j} (a q^{-i})_{j} (b_{1} q^{i})_{i_{1}} \dotsm (b_{r} q^{i})_{i_{r}}}{(c q^{i})_{1 - s + I_{r} + j} (q)_{j} (q)_{i_{1}} \dotsm (q)_{i_{r}}} (x q^{i})^{j} x_{1}^{i_{1}} \dotsm x_{r}^{i_{r}} 
= 0, 
\end{align*}
where $I_{r} := i_{1} + \dotsm + i_{r}$. 
\end{lem}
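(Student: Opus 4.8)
The plan is to interchange the order of the (finite) summations so that the sum over $i$ is performed last, and then to recognize each resulting inner $i$-sum as a terminating ${}_{r+3}\phi_{r+2}$ of the type covered by $(\ref{Gasper.sf15})$, hence zero; summing up the zeros then gives the claim. Since the left-hand side is in fact a polynomial in the parameters — note that in every term appearing in the sum one has $1-s+I_{r}+j\le0$, so $1/(cq^{i})_{1-s+I_{r}+j}$ is a polynomial in $c$ — it suffices to establish the identity for generic parameter values.

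First I would fix $i_{1},\dotsc,i_{r}$ and $j$ with $I_{r}+j\le s-1$ (for $I_{r}\ge s$ the inner $j$-sum is empty by the convention stated above) and rewrite the $i$-dependent part of the summand using the elementary $q$-shifted-factorial identities recalled above: $(b_{\nu}q^{i})_{i_{\nu}}=(b_{\nu})_{i_{\nu}}(b_{\nu}q^{i_{\nu}})_{i}/(b_{\nu})_{i}$, $(cq^{i})_{1-s+I_{r}+j}=(c)_{1-s+I_{r}+j}(cq^{1-s+I_{r}+j})_{i}/(c)_{i}$, and $(aq^{-i})_{j}=(a)_{j}(q/a)_{i}\,q^{-ij}/(q^{1-j}/a)_{i}$. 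The crucial point is the cancellation $q^{-ij}\cdot q^{ij}=1$ coming from the last identity together with the factor $(xq^{i})^{j}=x^{j}q^{ij}$. After extracting the factor that is constant in $i$, the inner sum becomes
\[
C\sum_{i=0}^{s}\frac{(q^{-s})_{i}}{(q)_{i}}\,\frac{(q/a)_{i}\,(c)_{i}}{(q^{1-j}/a)_{i}\,(cq^{1-s+I_{r}+j})_{i}}\,\prod_{\nu=1}^{r}\frac{(b_{\nu}q^{i_{\nu}})_{i}}{(b_{\nu})_{i}}\,q^{i},
\]
where $C$ is an explicit product of $q$-shifted factorials in $i_{1},\dotsc,i_{r},j$ times $x^{j}x_{1}^{i_{1}}\dotsm x_{r}^{i_{r}}$, independent of $i$. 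Since $(q^{-s})_{i}$ annihilates all terms with $i>s$, this equals $C$ times
\[
{}_{r+3}\phi_{r+2}\biggl(\genfrac..{0pt}{}{q^{-s},\,c,\,q/a,\,b_{1}q^{i_{1}},\dotsc,b_{r}q^{i_{r}}}{cq^{1-s+I_{r}+j},\,q^{1-j}/a,\,b_{1},\dotsc,b_{r}};q,\,q\biggr).
\]

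Next I would verify that this series is of the shape in $(\ref{Gasper.sf15})$, with the role of ``$r$'' there played by $r+2$. Reading off the parameters: $q^{-s}$ is the ``$q^{-m}$'', so $m=s$; the pair $(c,\,cq^{1-s+I_{r}+j})$ yields the lower parameter $c_{r+1}=cq^{1-s+I_{r}+j}$ with shift $n_{r+1}=s-1-I_{r}-j\ge0$ (this is exactly where the range condition $I_{r}+j\le s-1$ is used); the pair $(q/a,\,q^{1-j}/a)$ yields shift $n_{r+2}=j\ge0$; and each pair $(b_{\nu}q^{i_{\nu}},\,b_{\nu})$ yields shift $n_{\nu}=i_{\nu}\ge0$. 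The inequality required in $(\ref{Gasper.sf15})$, namely $m>n_{1}+\dotsm+n_{r+2}$, reads $s>i_{1}+\dotsm+i_{r}+(s-1-I_{r}-j)+j=s-1$, which always holds. Hence every inner sum vanishes, and summing over $i_{1},\dotsc,i_{r}$ and $j$ yields the lemma.

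The step I expect to require the most care is the bookkeeping in the second paragraph, since $1-s+I_{r}+j$ may be negative or zero: one must apply the negative-index forms of the $q$-shifted-factorial identities correctly and keep track of when a top and a bottom parameter coincide and cancel — this happens, harmlessly, when $j=0$ (then $q^{1-j}/a=q/a$) or when $I_{r}+j=s-1$ (then $cq^{1-s+I_{r}+j}=c$), in which cases one simply lands on a lower-order instance of $(\ref{Gasper.sf15})$. For generic parameters all the lower parameters $cq^{1-s+I_{r}+j}$, $q^{1-j}/a$, and $b_{\nu}$ lie outside $q^{\mathbb{Z}_{\le0}}$, so $(\ref{Gasper.sf15})$ applies and the manipulations are legitimate; the general (polynomial) identity then follows.
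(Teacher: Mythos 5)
Your proposal is correct and follows essentially the same route as the paper: after interchanging the order of summation, the paper also recognizes the inner $i$-sum (up to the same $i$-independent factor) as the terminating series ${}_{r+3}\phi_{r+2}\bigl(q^{-s}, c, q/a, b_{1}q^{i_{1}}, \dotsc, b_{r}q^{i_{r}}; cq^{1-s+I_{r}+j}, q^{1-j}/a, b_{1}, \dotsc, b_{r}; q, q\bigr)$ and kills it with $(\ref{Gasper.sf15})$. Your verification of the hypothesis $s > I_{r} + (s-1-I_{r}-j) + j = s-1$ and your handling of the degenerate parameter coincidences are exactly the details the paper leaves implicit.
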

\begin{proof}
Changing the order of summation, we rewrite the left-hand side as 
\begin{align*}
&\sum_{i_{1} = 0}^{n_{1}} \dotsm \sum_{i_{r} = 0}^{n_{r}} 
\sum_{j = 0}^{s - 1 - I_{r}} 
\frac{(q^{1 - s})_{I_{r} + j} (a)_{j} (b_{1})_{i_{1}} \dotsm (b_{r})_{i_{r}}}{(c)_{1 - s + I_{r} + j} (q)_{j} (q)_{i_{1}} \dotsm (q)_{i_{r}}} 
x^{j} x_{1}^{i_{1}} \dotsm x_{r}^{i_{r}} \\
&\quad \times 
{}_{r + 3}\phi_{r + 2}\biggl(\genfrac..{0pt}{}{q^{-s},\, c,\, q / a,\, b_{1} q^{i_{1}}, \dotsc, b_{r} q^{i_{r}}}{c q^{1 - s + I_{r} + j},\, q^{1 - j} / a,\, b_{1}, \dotsc, b_{r}}; q,\, q\biggr). 
\end{align*}
Then, from $(\ref{Gasper.sf15})$, we find that the sum of the above ${}_{r + 3}\phi_{r + 2}$ series equals zero. 
This proves the lemma. 
\end{proof}

\begin{remark}
It follows from Lemma~$\ref{lem:2-3}$ that 
%\begin{cor}\label{cor:2-4}
if $r, s \in \mathbb{Z}_{\geq 0}$ 
and $c \notin \left\{q^{-m} \mid m \in \mathbb{Z}_{\geq 0} \right\}$, then 
\begin{align*}
\sum_{i = 0}^{s} \frac{(q^{-s})_{i}\, q^{i}}{(q)_{i} (c q^{i})_{1 - s}} 
\phi_{D}\biggl(\genfrac..{0pt}{}{q^{1 - s};\, a q^{-i}, b_{1} q^{i}, \dotsc, b_{r} q^{i}}{c q^{i + 1 - s}}; x q^{i}, x_{1}, \dotsc, x_{r}\biggr) 
= 0. 
\end{align*}
%\end{cor}
%\begin{proof}
%If $c \notin \left\{q^{-m} \mid m \in \mathbb{Z}_{\geq 0} \right\}$, then 
%we are able to reduce Corollary~$$ 
%to the $n_{1} = \dotsm = n_{r} = s - 1$ case of 
%Lemma~$\ref{lem:2-3}$. 
%Thus the corollary is proved. 
%\end{proof}
\end{remark}

%\begin{remark}
%By applying the above technique, 
%we are able to generalize Corollary~$\ref{cor:2-4}$ as follows: 
%If $p, r, s \in \mathbb{Z}_{\geq 0}$ and $c \notin \left\{q^{-m} \mid m \in \mathbb{Z}_{\geq 0} \right\}$, then 
%\begin{align*}
%\sum_{i = 0}^{s} \frac{(q^{-s})_{i}\, q^{i}}{(q)_{i} (c q^{i})_{1 - s}} 
%\phi_{D}\biggl(\genfrac..{0pt}{}{q^{1 - s};\, a_{1} q^{-i}, \dotsc, a_{p} q^{-i}, b_{1} q^{i}, \dotsc, b_{r} q^{i}}{c q^{i + 1 - s}}; x_{1} q^{i}, \dotsc, x_{p} q^{i}, x_{1}, \dotsc, x_{r}\biggr) 
%= 0. 
%\end{align*}
%\end{remark}

Now, let us prove Lemma~$\ref{Gasper.tf1,2}$. 
It is sufficient to prove this lemma under 
the additional assumption that $\lvert b \rvert < 1$, 
because, from the uniqueness of analytic continuation, 
we may drop this assumption. 
Thus, below, we impose the additional assumption. 

First, we prove $(\ref{Gasper.tf1})$ by using $(\ref{A'})$ and Corollary~$\ref{cor:2-2}$. 
From $(\ref{A'})$, the left- and the right-hand sides of $(\ref{Gasper.tf1})$ can be rewritten as 
\begin{align*}
&\frac{(b)_{m+1}}{(c_{1})_{n_{1}} \dotsm (c_{r})_{n_{r}}} 
\widetilde{\phi}_{D} \biggl(\genfrac..{0pt}{}{a^{-1} q^{m + 1 - (n_{1} + \dotsm + n_{r}) - s};\, q^{m+1},\, q^{-n_{1}}, \dotsc, q^{-n_{r}}}{q^{m + 1 - (n_{1} + \dotsm + n_{r}) - s}}; b,\, c_{1} q^{n_{1}}, \dotsc, c_{r} q^{n_{r}}\biggr), 
\allowdisplaybreaks \\
&\frac{(b)_{m+1}}{(c_{1})_{n_{1}} \dotsm (c_{r})_{n_{r}}} \frac{(q^{m+1})_{\infty}}{(q / a)_{\infty}} 
(-1)^{n_{1} + \dotsm + n_{r}} q^{\left\{n_{1} (n_{1} - 1) + \dotsm + n_{r} (n_{r} - 1) \right\} / 2} 
b^{- m + s} c_{1}^{n_{1}} \dotsm c_{r}^{n_{r}} \\
&\quad \times 
\widetilde{\phi}_{D} \biggl(\genfrac..{0pt}{}{q^{1 + s};\, q / a,\, q^{-n_{1}}, \dotsc, q^{-n_{r}}}{q^{1 - m + s}}; b,\, \frac{b q}{c_{1}}, \dotsc, \frac{b q}{c_{r}}\biggr), 
\end{align*}
respectively. Then, from Corollary~$\ref{cor:2-2}$, we find that these two expressions are equal to each other. 
This completes the proof of $(\ref{Gasper.tf1})$.

Next, we prove $(\ref{Gasper.tf2})$ 
by using $(\ref{A'})$ and Lemmas~$\ref{lem:2-1}$ and $\ref{lem:2-3}$. 
Note that $1 / (q^{i - m})_{m - i} = 0$ holds for $i > m$. 
From this and $(\ref{A'})$, 
the right-hand side of $(\ref{Gasper.tf2})$ can be rewritten as 
\begin{align}\label{G2-right}
&-\sum_{i = 0}^{\min\left\{m, s \right\}} \frac{(q^{-s})_{i}}{(q)_{i}} q^{i} \frac{b^{1 - s}}{(q^{i - m})_{m - i}} 
\widetilde{\phi}_{D} \biggl(\genfrac..{0pt}{}{q^{1 - s};\, q^{1 - i} / a,\, q^{i - n_{1}}, \dotsc, q^{i - n_{r}}}{q^{i - m + 1 - s}}; b q^{i},\, \frac{b q}{c_{1}}, \dotsc, \frac{b q}{c_{r}}\biggr). 
\end{align}
On the other hand, from $(\ref{A'})$, the left-hand side of $(\ref{Gasper.tf2})$ can be rewritten as 
\begin{align}\label{G2-left}
&\sum_{i = 0}^{s} \frac{(q^{-s})_{i}}{(q)_{i}} q^{s i} 
\frac{(q^{1 - i} / a)_{\infty} (q^{1 - n_{1}} / c_{1})_{n_{1} - i} \dotsm (q^{1 - n_{r}} / c_{r})_{n_{r} - i}}{(q)_{\infty} (q^{- m} / b)_{m - i + 1}} 
\frac{(b q^{i})_{m + 1 - i}}{(c_{1} q^{i})_{n_{1} - i} \dotsm (c_{r} q^{i})_{n_{r} - i}} \\
&\quad \times 
\widetilde{\phi}_{D} \biggl(\genfrac..{0pt}{}{a^{-1} q^{m + 1 - (n_{1} + \dotsm + n_{r}) + s + (r - 2)i};\, q^{m + 1 - i},\, q^{i - n_{1}}, \dotsc, q^{i - n_{r}}}{q^{m + 1 - (n_{1} + \dotsm + n_{r}) + s + (r - 1)i}}; b q^{i},\, c_{1} q^{n_{1}}, \dotsc, c_{r} q^{n_{r}}\biggr). \nonumber 
\end{align}
Below, we show that $(\ref{G2-left})$ is equal to $(\ref{G2-right})$. 
It follows from Lemma~$\ref{lem:2-1}$ that $(\ref{G2-left})$ equals 
\begin{align}\label{G2-left2}
&\sum_{i = 0}^{s} \frac{(q^{-s})_{i}}{(q)_{i}} (-1)^{m + 1 - i} q^{\left\{(m - i) (m - i - 1) / 2\right\} + m} b^{1 - s} \\
&\times \sum_{i_{1} = 0}^{n_{1} - i} \!\dotsm\! \sum_{i_{r} = 0}^{n_{r} - i} 
\sum_{j = \max\left\{0,\, I'\right\}}^{\infty} 
\!\!\!\frac{(q^{m + 1 - i})_{j - I'} (q^{1 - i} / a)_{j} (q^{i - n_{1}})_{i_{1}} \dotsm (q^{i - n_{r}})_{i_{r}}}{(q)_{j - I'} (q)_{j} (q)_{i_{1}} \dotsm (q)_{i_{r}}} (b q^{i})^{j} \left(\frac{b q}{c_{1}} \right)^{i_{1}} \!\!\dotsm \left(\frac{b q}{c_{r}} \right)^{i_{r}}\!\!, \nonumber 
\end{align}
where $I' := m - i + s - I_{r}$ with $I_{r} := i_{1} + \dotsm + i_{r}$. 
Since 
\begin{align*}
\frac{(q^{m + 1 - i})_{j - I'}}{(q)_{j - I'}} 
= (-1)^{m + 1 - i + \delta(i \leq m)} q^{-\left\{(m - i) (m - i - 1) / 2\right\} - m + i} 
\frac{(q^{1 - s})_{I_{r} + j}}{(q^{1 - s})_{s - 1} (q^{i - m})_{1 - s + I_{r} + j}}, 
\end{align*}
where $\delta(i \leq m) := 1\, (i \leq m)$ and $0\, (i > m)$, 
%\begin{align*}
%\frac{(q^{m + 1 - i})_{j - I'}}{(q)_{j - I'}} 
%&= \frac{(q^{m + 1 - i})_{i - m - 1} (q^{1 - s})_{I_{r} + j}}{(q^{1 - s})_{s - 1} (q)_{i - m - 1} (q^{i - m})_{1 - s + I_{r} + j}} \\
%&= 
%\begin{cases}
%\displaystyle\frac{(q^{m + 1 - i})_{i - m - 1}}{(q)_{i - m - 1}} 
%\frac{(q^{1 - s})_{I_{r} + j}}{(q^{1 - s})_{s - 1} (q^{i - m})_{1 - s + I_{r} + j}}, & i \leq m, \\
%\displaystyle\frac{(q^{i - m})_{m - i}}{(q)_{m - i}} 
%\frac{(q^{1 - s})_{I_{r} + j}}{(q^{1 - s})_{s - 1} (q^{i - m})_{1 - s + I_{r} + j}}, & i > m, 
%\end{cases}\\
%&= 
%\begin{cases}
%\displaystyle (-1)^{m - i} q^{-\left\{(m - i) (m - i - 1) + 2m \right\} / 2} q^{i} 
%\frac{(q^{1 - s})_{I_{r} + j}}{(q^{1 - s})_{s - 1} (q^{i - m})_{1 - s + I_{r} + j}}, 
%& i \leq m, \\
%\displaystyle (-1)^{m + 1 - i} q^{-\left\{(m - i) (m - i - 1) + 2m \right\} / 2} q^{i} 
%\frac{(q^{1 - s})_{I_{r} + j}}{(q^{1 - s})_{s - 1} (q^{i - m})_{1 - s + I_{r} + j}}, 
%& i > m. 
%\end{cases}
%\end{align*}
putting 
\begin{align*}
H (i, i_{1}, \dotsc, i_{r}, j) 
:= \frac{(q^{1 - s})_{I_{r} + j} (q^{1 - i} / a)_{j} (q^{i - n_{1}})_{i_{1}} \dotsm (q^{i - n_{r}})_{i_{r}}}{(q^{i - m})_{1 - s + I_{r} + j} (q)_{j} (q)_{i_{1}} \dotsm (q)_{i_{r}}} 
(b q^{i})^{j} \left(\frac{b q}{c_{1}} \right)^{i_{1}} \dotsm \left(\frac{b q}{c_{r}} \right)^{i_{r}}, 
\end{align*}
we rewrite $(\ref{G2-left2})$ as 
\begin{align}\label{G2-left3}
\sum_{i = 0}^{s} \frac{(q^{-s})_{i}}{(q)_{i}} q^{i} (-1)^{\delta(i \leq m)} 
\frac{b^{1 - s}}{(q^{1 - s})_{s - 1}} 
\sum_{i_{1} = 0}^{n_{1} - i} \dotsm \sum_{i_{r} = 0}^{n_{r} - i} 
\sum_{j = \max\left\{0,\, I'\right\}}^{\infty} H (i, i_{1}, \dotsc, i_{r}, j). 
\end{align}
Moreover, considering cases satisfying $H (i, i_{1}, \dotsc, i_{r}, j) = 0$, we find that $(\ref{G2-left3})$ equals 
\begin{align}\label{G2-left4}
&- \frac{b^{1 - s}}{(q^{1 - s})_{s - 1}} 
\sum_{i = 0}^{\min\left\{m, s \right\}} \frac{(q^{-s})_{i}}{(q)_{i}} q^{i} 
\sum_{i_{1} = 0}^{n_{1} - i} \dotsm \sum_{i_{r} = 0}^{n_{r} - i} 
\sum_{j = \max\left\{0,\, s - I_{r}\right\}}^{\infty} H (i, i_{1}, \dotsc, i_{r}, j) \\
&\quad + \frac{b^{1 - s}}{(q^{1 - s})_{s - 1}} 
\sum_{i = \max\left\{0, m + 1 \right\}}^{s} \frac{(q^{-s})_{i}}{(q)_{i}} q^{i} 
\sum_{i_{1} = 0}^{n_{1} - i} \dotsm \sum_{i_{r} = 0}^{n_{r} - i} 
\sum_{j = 0}^{s - 1 - I_{r}} H (i, i_{1}, \dotsc, i_{r}, j). \nonumber 
\end{align}
It follows from Lemma~$\ref{lem:2-3}$ that the second multiple series in $(\ref{G2-left4})$ equals 
\begin{align*}
-\frac{b^{1 - s}}{(q^{1 - s})_{s - 1}} 
\sum_{i = 0}^{\min\left\{m, s \right\}} \frac{(q^{-s})_{i}}{(q)_{i}} q^{i} 
\sum_{i_{1} = 0}^{n_{1} - i} \dotsm \sum_{i_{r} = 0}^{n_{r} - i} 
\sum_{j = 0}^{s - 1 - I_{r}} H (i, i_{1}, \dotsc, i_{r}, j). 
\end{align*}
This implies that $(\ref{G2-left4})$ equals 
\begin{align}\label{G2-left5}
-\frac{b^{1 - s}}{(q^{1 - s})_{s - 1}} 
\sum_{i = 0}^{\min\left\{m, s \right\}} \frac{(q^{-s})_{i}}{(q)_{i}} q^{i} 
\sum_{i_{1} = 0}^{n_{1} - i} \dotsm \sum_{i_{r} = 0}^{n_{r} - i} 
\sum_{j = 0}^{\infty} H (i, i_{1}, \dotsc, i_{r}, j). 
\end{align}
It is easily verified that $(\ref{G2-left5})$ is equal to $(\ref{G2-right})$. 
This completes the proof of $(\ref{Gasper.tf2})$.

\subsection{Summation formulas}\quad 
By using Lemma~$\ref{Gasper.tf1,2}$, we derive some summation formulas. 
Below, it is assumed that $r, n_{1}, \dotsc, n_{r}, s \in \mathbb{Z}_{\geq 0}$. 

By setting $m = 0$ in $(\ref{Gasper.tf1})$, we find that 
if $\big\lvert a^{-1} q^{1 - (n_{1} + \dotsm + n_{r}) - s} \big\rvert < 1$, then 
\begin{align}\label{Gasper.sf1}
&{}_{r + 2}\phi_{r + 1} \biggl(\genfrac..{0pt}{}{a, \,b, \,c_{1} q^{n_{1}}, \dotsc, c_{r} q^{n_{r}}}{b q, \,c_{1}, \dotsc, c_{r}}; q,\, a^{-1} q^{1 - (n_{1} + \dotsm + n_{r}) - s}\biggr) \\
&= \frac{(q)_{\infty} (b q / a)_{\infty} (c_{1} / b)_{n_{1}} \dotsm (c_{r} / b)_{n_{r}}}{(q / a)_{\infty} (b q)_{\infty} (c_{1})_{n_{1}} \dotsm (c_{r})_{n_{r}}} 
b^{n_{1} + \dotsm + n_{r} + s}. \nonumber 
\end{align}
(This follows also from the $c = q$ case of Chu's~\cite[(14)]{Chu} bilateral summation formula.) 
By setting $m = 0$ in $(\ref{Gasper.tf2})$, we find that 
if $\big\lvert a^{-1} q^{1 - (n_{1} + \dotsm + n_{r}) + s + \min\left\{r - 2, 0 \right\} s} \big\rvert < 1$ 
and $\min\left\{n_{1}, \dotsc, n_{r}, s \right\} = s$, then 
\begin{align}\label{Gasper.sf2}
&\sum_{i = 0}^{s} \frac{(q^{-s})_{i}}{(q)_{i}} q^{s i} 
\frac{(q^{1 - i} / a)_{\infty} (q^{1 - n_{1}} / c_{1})_{n_{1} - i} \dotsm (q^{1 - n_{r}} / c_{r})_{n_{r} - i}}{(q)_{\infty} (1 / b)_{1 - i}} \\
&\quad \times {}_{r + 2}\phi_{r + 1} \biggl(\genfrac..{0pt}{}{a q^{i}, \,b q^{i}, \,c_{1} q^{n_{1}}, \dotsc, c_{r} q^{n_{r}}}{b q, \,c_{1} q^{i}, \dotsc, c_{r} q^{i}}; q,\, a^{-1} q^{1 - (n_{1} + \dotsm + n_{r}) + s + (r - 2) i}\biggr) \nonumber \allowdisplaybreaks \\
&= - \frac{(b q / a)_{\infty} (b q^{1 - n_{1}} / c_{1})_{n_{1}} \dotsm (b q^{1 - n_{r}} / c_{r})_{n_{r}}}{(b)_{\infty}} b^{1 - s}. 
\nonumber 
\end{align}
When $s = 0$, both $(\ref{Gasper.sf1})$ and $(\ref{Gasper.sf2})$ reduce to 
the formula~\cite[p.~197, (7)]{Gasper}.

It follows from the $m = - 1$ case of $(\ref{Gasper.tf1})$ that 
if $\big\lvert a^{-1} q^{- (n_{1} + \dotsm + n_{r}) - s} \big\rvert < 1$, then 
\begin{align}\label{Gasper.sf3}
{}_{r + 1}\phi_{r} \biggl(\genfrac..{0pt}{}{a, \,c_{1} q^{n_{1}}, \dotsc, c_{r} q^{n_{r}}}{c_{1}, \dotsc, c_{r}}; q,\, a^{-1} q^{- (n_{1} + \dotsm + n_{r}) - s}\biggr) 
= 0. 
\end{align}
Substituting $a = q^{- (n_{1} + \dotsm + n_{r}) - s - 1}$ into $(\ref{Gasper.sf3})$ gives 
the formula~\cite[p.~198, (15)]{Gasper}, 
which is introduced in $(\ref{Gasper.sf15})$ and used to prove $(\ref{Gasper.tf2})$. 
It follows from the $m = - 1$ case of $(\ref{Gasper.tf2})$ that 
if $\big\lvert a^{-1} q^{- (n_{1} + \dotsm + n_{r}) + s + \min\left\{r - 2, 0 \right\} s} \big\rvert < 1$ and 
$\min\left\{n_{1}, \dotsc, n_{r}, s \right\} = s$, then 
\begin{align}\label{Gasper.sf4}
&\sum_{i = 0}^{s} \frac{(q^{-s})_{i}}{(q)_{i}} q^{s i} 
\frac{(q^{1 - i} / a)_{\infty} (q^{1 - n_{1}} / c_{1})_{n_{1} - i} \dotsm (q^{1 - n_{r}} / c_{r})_{n_{r} - i}}{(q / b)_{- i}} \\
&\quad \times {}_{r + 2}\phi_{r + 1} \biggl(\genfrac..{0pt}{}{a q^{i}, \,b q^{i}, \,c_{1} q^{n_{1}}, \dotsc, c_{r} q^{n_{r}}}{b, \,c_{1} q^{i}, \dotsc, c_{r} q^{i}}; q,\, a^{-1} q^{- (n_{1} + \dotsm + n_{r}) + s + (r - 2) i}\biggr) = 0. \nonumber 
\end{align}
When $s = 0$, both $(\ref{Gasper.sf3})$ and $(\ref{Gasper.sf4})$ reduce to 
the formula~\cite[p.~197, (8)]{Gasper}.

%From \cite[p.~197, (7)]{Gasper}, Gasper derived the other summation formulas given in \cite{Gasper}. 
Of course, considering limit cases and inversions of $(\ref{Gasper.sf1})\mbox{--}(\ref{Gasper.sf4})$ 
as in \cite{Gasper}, 
we are able to derive generalizations of Gasper's~\cite{Gasper} other summation formulas. 
%as follows: 
%By letting $a \to \infty$ in $(\ref{Gasper.sf1})$ and $(\ref{Gasper.sf2})$, 
%we obtain generalizations of \cite[p.~198, (11)]{Gasper}; 
%By letting $a \to \infty$ in $(\ref{Gasper.sf3})$ and $(\ref{Gasper.sf4})$, 
%we obtain generalizations of \cite[p.~198, (12)]{Gasper}; 
%Setting $a = 1$ and then replacing $c_{i}$ by $b q^{1 - n_{i}} / c_{i}$ ($i = 1, \dotsc, r$) 
%in $(\ref{Gasper.tf1})$ and $(\ref{Gasper.tf2})$ gives generalizations of \cite[p.~198, (14)]{Gasper}; 

\section{\bf{Three-term relations for the basic hypergeometric series ${}_{2}\phi_{1}$}}
In this section, following the notation, the method, and the process described in \cite{Suzuki}, 
we prove Theorem~$\ref{gen.main}$, Corollary~$\ref{cor}$, and Proposition~$\ref{gen.main2}$. 
In Sections~$3. 1$--$3. 6$, we prove Theorem~$\ref{gen.main}$ in the following way: 
In Section~$3. 1$, following the method described in \cite[Section~$2$]{Suzuki} 
(that is, employing the method due to Vid\=unas \cite[Section~$3$]{Vidunas}), 
we prove the uniqueness of the pair $(Q, R)$ satisfying $(\ref{gen.3tr})$. 
In Section~$3. 2$, we introduce a series ${}_{2}\widetilde{\phi}_{1}$ 
and rewrite the three-term relation $(\ref{gen.3tr})$ 
into a three-term relation for ${}_{2}\widetilde{\phi}_{1}$ with coefficients $\widetilde{Q}$ and $\widetilde{R}$. 
Then, comparing these two three-term relations, 
we obtain expressions for $Q$ and $R$ in terms of $\widetilde{Q}$ and $\widetilde{R}$, respectively. 
In Section~$3. 3$, we introduce four local solutions $y_{i}$ ($i = 1, 2, 3, 4$) 
of a $q$-differential equation $E_{q} (a, b, c; x)$ 
defined by $L y (x) = 0$, 
where 
\begin{align*}
L := (1 - T_{q}) (1 - c q^{-1} T_{q}) - x (1 - a T_{q}) (1 - b T_{q}) 
\end{align*}
with $T_{q} : x \mapsto q x$. 
Also, in Section~$3. 4$, we introduce eight $q$-differential operators called ``contiguity operators,'' 
and combining these operators, 
we obtain a $q$-differential operator $\theta (k, l, m; n)$. 
Then, in Section~$3. 5$, operating $\theta (k, l, m; n)$ on $y_{i}$'s 
and using the uniqueness proved in Section~$3. 1$, 
we obtain linear equations for $\widetilde{Q}$ and $\widetilde{R}$. 
Solving these equations, we are able to express each $\widetilde{Q}$ and $\widetilde{R}$ 
as a ratio of infinite series defined as a sum of products of $y_{i}$'s. 
In Section~$3. 6$, 
%to obtain more explicit expressions for $\widetilde{Q}$ and $\widetilde{R}$, 
using these expressions and Lemma~$\ref{Gasper.tf1,2}$, 
we obtain expressions for $\widetilde{Q}$ and $\widetilde{R}$ in terms of $P$ 
and thereby complete the proof of Theorem~$\ref{gen.main}$. 
In Section~$3. 7$, to prove Proposition~$\ref{gen.main2}$, 
we first define a polynomial $\widetilde{P}$ and obtain an expression for $Q$ in terms of $\widetilde{P}$ 
that differs from the expression in terms of $P$ given in Theorem~$\ref{gen.main}$. 
Then, employing the uniqueness of $Q$, by comparing these expressions for $Q$, 
we are able to complete the proof. 
In Section~$3. 8$, by using Theorem~$\ref{gen.main}$, we prove Corollary~$\ref{cor}$. 

%For conciseness, we put 
%$\vec{a} := (a, b, c),\ \vec{k} := (k, l, m)$, and $\vec{1} := (1, 1, 1)$. 
%Also, for any triple of integers $\vec{n} = (n_{1}, n_{2}, n_{3})$, we write 
%$(a q^{n_{1}}, b q^{n_{2}}, c q^{n_{3}})$ as $\vec{a} q^{\vec{n}}$. 
%Then, the three-term relation $(\ref{gen.3tr})$ can be written as 
%\begin{align*}%\label{3tr.vec}
%{}_{2}\phi_{1} (\vec{a}\hat{\;}\vec{k}; q, x q^{n}) 
%= Q \cdot {}_{2}\phi_{1} (\vec{a}\hat{\;}\vec{1}; q, x) + R \cdot {}_{2}\phi_{1} (\vec{a}; q, x). 
%\end{align*}

\subsection{Uniqueness of the pair $(Q, R)$}\quad 
We prove by contradiction that the pair $(Q, R)$ 
satisfying $(\ref{gen.3tr})$ is uniquely determined by $(k, l, m, n)$. 

Let us assume that there are two distinct pairs $(Q_{1}, R_{1})$ and $(Q_{2}, R_{2})$ of rational functions 
satisfying $(\ref{gen.3tr})$. Then, we have 
\begin{align*}
(Q_{1} - Q_{2}) \cdot \p{a q}{b q}{c q}{x} = (R_{2} - R_{1}) \cdot \p{a}{b}{c}{x}. 
\end{align*}
Therefore, it turns out that ${}_{2}\phi_{1} (a q, b q; c q; q, x) \big/ {}_{2}\phi_{1} (a, b; c; q, x)$ 
is a rational function of $a, b, c, q$, and $x$. 
However, this leads to a contradiction 
(see the last eight lines of \cite[Section~2.1]{Suzuki} for detail). 
Thus the uniqueness of the pair $(Q, R)$ satisfying $(\ref{gen.3tr})$ is proved.

\subsection{Rewriting of the three-term relation}\quad 
We introduce a series ${}_{2}\widetilde{\phi}_{1}$ and rewrite the three-term relation $(\ref{gen.3tr})$ 
into a three-term relation for ${}_{2}\widetilde{\phi}_{1}$. 

Let ${}_{2}\widetilde{\phi}_{1}$ be the series defined by 
\begin{align*}
\f{a}{b}{c}{x} := \frac{(q)_{\infty} (c)_{\infty}}{(a)_{\infty} (b)_{\infty}} \p{a}{b}{c}{x}. 
\end{align*}
Then, the three-term relation $(\ref{gen.3tr})$ can be rewritten as 
\begin{align}\label{gen.3tr'}
\f{a q^{k}}{b q^{l}}{c q^{m}}{x q^{n}} 
= \widetilde{Q} \cdot \f{a q}{b q}{c q}{x} + \widetilde{R} \cdot \f{a}{b}{c}{x}. 
\end{align}
Comparing $(\ref{gen.3tr})$ with $(\ref{gen.3tr'})$, 
we find that $Q$ and $R$ can be expressed in terms of $\widetilde{Q}$ and $\widetilde{R}$ as 
\begin{align}
Q &= \frac{(c q)_{m - 1}}{(a q)_{k - 1} (b q)_{l - 1}} \widetilde{Q}, \label{gen.Q,tQ} \\
R &= \frac{(c)_{m}}{(a)_{k} (b)_{l}} \widetilde{R}. \label{gen.R,tR}
\end{align}
Hence, we investigate $\widetilde{Q}$ and $\widetilde{R}$. 
Note that as a consequence of Section~$3. 1$, it follows that 
the pair $(\widetilde{Q}, \widetilde{R})$ satisfying $(\ref{gen.3tr'})$ is uniquely determined by $(k, l, m, n)$.

\subsection{Local solutions of $E_{q} (a, b, c; x)$}\quad 
We introduce four local solutions of $E_{q} (a, b, c; x)$. 

Let $y_{i}$ ($i = 1, 2, 3, 4$) be the functions defined by 
\begin{align*}
\y{1}{a}{b}{c}{x} = y_{1} (a, b; c; x) &:= \f{a}{b}{c}{x}, \allowdisplaybreaks \\
\y{2}{a}{b}{c}{x} = y_{2} (a, b; c; x) &:= x^{1 - \gamma} \f{a q / c}{b q / c}{q^{2} / c}{x}, \allowdisplaybreaks \\
\y{3}{a}{b}{c}{x} = y_{3} (a, b; c; x) &:= a^{\gamma - \alpha - \beta + 1} x^{-\alpha} 
\f{a}{a q / c}{a q / b}{\frac{c q}{a b x}}, \allowdisplaybreaks \\
\y{4}{a}{b}{c}{x} = y_{4} (a, b; c; x) &:= b^{\gamma - \alpha - \beta + 1} x^{-\beta} 
\f{b}{b q / c}{b q / a}{\frac{c q}{a b x}}, 
\end{align*}
where $a = q^{\alpha},\ b = q^{\beta}$, and $c = q^{\gamma}$. 

Then, using the general theory of linear difference equations mentioned 
in \cite[p.~62, Theorem~2. 15]{Elaydi}, we are able to verify the following lemma: 
\begin{lem}\label{lem:3-1}
Under the assumption $(\ref{assump})$, 
$y_{i} (a, b; c; x) \, (i = 1, 2)$ are linearly independent solutions around $x = 0$ 
of $E_{q} (a, b, c; x)$, and $y_{i} (a, b; c; x) \, (i = 3, 4)$ are linearly independent solutions around $x = \infty$. 
\end{lem}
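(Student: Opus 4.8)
The plan is to verify directly that each $y_i$ satisfies the $q$-difference equation $Ly=0$ and then invoke the general existence/dimension theory for second-order linear $q$-difference equations to conclude linear independence. First I would check that $y_1$ solves $E_q(a,b,c;x)$: since $y_1$ is (up to a constant factor depending only on parameters) the series ${}_2\phi_1(a,b;c;q,x)$, applying $L=(1-T_q)(1-cq^{-1}T_q)-x(1-aT_q)(1-bT_q)$ termwise and comparing the coefficient of $x^i$ reduces to the standard contiguous/recurrence identity for the ${}_2\phi_1$ coefficients; this is the classical fact that ${}_2\phi_1(a,b;c;q,x)$ satisfies the $q$-hypergeometric equation, so I would simply record the coefficient computation. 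For $y_2=x^{1-\gamma}\,{}_2\widetilde\phi_1(aq/c,bq/c;q^2/c;x)$, I would substitute $y(x)=x^{1-\gamma}u(x)$ into $Ly=0$; using $T_q(x^{1-\gamma}u)=q^{1-\gamma}x^{1-\gamma}(T_qu)=c^{-1}q\,x^{1-\gamma}(T_qu)$, the operator $L$ conjugates into the operator defining $E_q(aq/c,bq/c,q^2/c;x)$ up to an overall power of $x$, so $y_2$ is a solution precisely because $u={}_2\widetilde\phi_1(aq/c,bq/c;q^2/c;x)$ solves that shifted equation, which is the $y_1$-case with parameters relabeled.

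Next I would treat $y_3$ and $y_4$. Here the natural move is the change of variable $x\mapsto cq/(abx)$ together with the substitution $y=x^{-\alpha}v$ (respectively $x^{-\beta}v$); under $x\mapsto cq/(abx)$ one has $T_q$ acting on functions of the new variable as a $q^{-1}$-shift, so $L$ transforms into (a power of $x$ times) the operator $\bar L$ governing $E_q(a,aq/c,aq/b;\,cq/(abx))$ after the indicated gauge factor $x^{-\alpha}$ is absorbed. Concretely I would verify the operator identity
\begin{align*}
L\bigl[x^{-\alpha}F(cq/(abx))\bigr]=(\text{explicit power of }x)\cdot\bigl(\bar L F\bigr)(cq/(abx)),
\end{align*}
and then conclude $y_3$ is a solution because ${}_2\widetilde\phi_1(a,aq/c;aq/b;cq/(abx))$ solves $\bar L F=0$ by the already-established $y_1$-type fact; $y_4$ is identical with $a\leftrightarrow b$. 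The prefactors $a^{\gamma-\alpha-\beta+1}$, $b^{\gamma-\alpha-\beta+1}$ are constants and play no role in solving the equation (they are there for normalization in later sections), so I would note that and move on.

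Finally I would establish the independence statements. For $x=0$: $y_1$ and $y_2$ are Frobenius-type solutions with leading exponents $0$ and $1-\gamma$ respectively; under the assumption $(\ref{assump})$, $c\notin q^{\mathbb Z}$ forces $1-\gamma\notin\mathbb Z$ in the relevant sense (the exponent difference is not a ``resonant'' integer), and by \cite[p.~62, Theorem~2.15]{Elaydi} the solution space of the second-order equation around the regular point is two-dimensional; two solutions with distinct leading behaviors $x^0$ and $x^{1-\gamma}$ cannot be proportional, hence they form a basis. The $x=\infty$ case is symmetric: $y_3$ and $y_4$ have leading exponents $-\alpha$ and $-\beta$ in $x$ at infinity, and $a/b\notin q^{\mathbb Z}$ (again from $(\ref{assump})$) means $\alpha-\beta$ is not a resonant integer, so these two solutions are independent by the same dimension count. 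I expect the main obstacle to be the bookkeeping in the $y_3,y_4$ computations — tracking the exact power of $x$ produced by $L$ under the simultaneous inversion $x\mapsto cq/(abx)$ and gauge twist $x\mapsto x^{-\alpha}$, and checking that the transformed operator is \emph{exactly} the $E_q$ operator with the stated parameters rather than something off by a shift; the $y_1,y_2$ cases are routine, and the independence arguments are short once the resonance conditions from $(\ref{assump})$ are in hand.
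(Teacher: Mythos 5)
Your proposal is correct, and its first half coincides with what the paper implicitly does: verifying that $y_{1}, \dotsc, y_{4}$ solve $E_{q} (a, b, c; x)$ can only go through the gauge and inversion computations you describe, and your conjugation identities for the factor $x^{1 - \gamma}$ and for the substitution $x \mapsto c q / (a b x)$ are the right ones. Where you genuinely diverge is the independence step. The paper's intended route --- signalled by Remark~$\ref{rem1}$ immediately after the lemma and carried out later in $(\ref{Y(1,1,1,0)})$ and $(\ref{tY(1,1,1,0)})$ --- is to compute the two Casoratians in closed form as infinite products, observe that under $(\ref{assump})$ every factor is nonzero, and invoke \cite[Theorem~2.15]{Elaydi}; this has the side benefit that the explicit Casoratians are reused as the denominators in Section~3.5. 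You instead argue from the distinct leading exponents $0$ and $1 - \gamma$ at $x = 0$, and $-\alpha$ and $-\beta$ at $x = \infty$, together with non-resonance. That works, but one point should be made explicit: for a $q$-difference equation the field of constants is the field of $q$-periodic functions rather than $\mathbb{C}$, so ``cannot be proportional'' must be checked against a $q$-periodic ratio. This is still immediate --- a $q$-periodic $f$ with $f (x) \sim x^{1 - \gamma}$ near $0$ forces $q^{1 - \gamma} = q / c = 1$, excluded by $(\ref{assump})$, and likewise $q^{\beta - \alpha} = b / a = 1$ is excluded at infinity --- but it is the step your phrasing glosses over. A minor terminological slip: $x = 0$ is a regular \emph{singular} point, not a regular point; the two-dimensionality of the solution space comes from the equation being of second order with nonvanishing extreme coefficients, which is what the cited theorem actually requires.
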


\begin{remark}\label{rem1}
The Casoratians are given by 
\begin{align*}
\det 
\left( 
\begin{array}{rr}
y_{1} & y_{2} \\
T_{q} y_{1} & T_{q} y_{2} 
\end{array}
\right) 
&= - \frac{(q)_{\infty}^{2} (c)_{\infty} (q / c)_{\infty} (a b q x / c)_{\infty}}{(a)_{\infty} (b)_{\infty} (a q / c)_{\infty} (b q / c)_{\infty} (x)_{\infty}} x^{1 - \gamma}, \allowdisplaybreaks \\
\det 
\left( 
\begin{array}{rr}
y_{3} & y_{4} \\
T_{q} y_{3} & T_{q} y_{4} 
\end{array}
\right) 
&= \frac{a (a b)^{\gamma - \alpha - \beta} (q)_{\infty}^{2} (a q / b)_{\infty} (b / a)_{\infty} (q / x)_{\infty}}{(a)_{\infty} (b)_{\infty} (a q / c)_{\infty} (b q / c)_{\infty} (c / (a b x))_{\infty}} x^{- \alpha - \beta}. 
\end{align*}
For these expressions, see Remark~$\ref{rem2},\ (\ref{Y(1,1,1,0)})$, and $(\ref{tY(1,1,1,0)})$. 
\end{remark}

\subsection{Contiguity operators}\quad 
We introduce eight $q$-differential operators 
that each increase or decrease $a, b, c$, or $x$ by $q$ times, 
and then, combining these operators, we obtain a linear isomorphism $\theta (k, l, m; n)$, 
which sends $a, b, c$, and $x$ to $a q^{k}, b q^{l}, c q^{m}$, and $x q^{n}$, respectively. 

Let $H_{j}$ and $B_{j}$ ($j = 1, 2, 3, 4$) be the $q$-differential operators 
defined by 
\begin{align*}
H_{1} (a, b, c; x) &:= 1 - a T_{q}, \allowdisplaybreaks \\
H_{2} (a, b, c; x) &:= 1 - b T_{q}, \allowdisplaybreaks \\
H_{3} (a, b, c; x) &:= \left\{(c - a) (c - b) x \right\}^{-1} 
\left\{c^{2} + a b x - (a + b) c x - c (c - a b x) T_{q} \right\}, \allowdisplaybreaks \\
H_{4} (a, b, c; x) &:= T_{q}, \allowdisplaybreaks \\
B_{1} (a, b, c; x) &:= \left\{(q - a) (c - a) \right\}^{-1} 
\left\{c q - a (q + c) + a^{2} x + a (c - a b x) T_{q} \right\}, \allowdisplaybreaks \\
B_{2} (a, b, c; x) &:= \left\{(q - b) (c - b) \right\}^{-1} 
\left\{c q - b (q + c) + b^{2} x + b (c - a b x) T_{q} \right\}, \allowdisplaybreaks \\
B_{3} (a, b, c; x) &:= 1 - c q^{-1} T_{q}, \allowdisplaybreaks \\
B_{4} (a, b, c; x) &:= (q - x)^{-1} 
\left\{c + q - (a + b) x - (c - a b x) T_{q} \right\}. 
\end{align*}
Then, the operators $H_{j}$ ($j = 1, 2, 3$) 
increase the parameters $a, b$, and $c$ by $q$ times, respectively, 
while the operators $B_{j}$ ($j = 1, 2, 3$) decrease these parameters by $q$ times. 
Also, the operators $H_{4}$ and $B_{4}$ shift the variable $x$ by $q$ and $q^{-1}$ times, respectively. 
Hence, we call these eight operators ``contiguity operators.'' 
The definitions of $H_{1}, H_{2}$, and $B_{3}$ are easily derived from 
Heine's \cite[p.~287, Formula~2 and p.~292, Formulas~42--44]{Heine} 
three-term relations 
and the definition of $H_{4}$ immediately comes from the definition of $T_{q}$. 
Using the method described in \cite[p.~46, Remark~2.1.4]{IKSY}, we are able to derive 
$B_{1}, B_{2}, H_{3}$, and $B_{4}$ from $H_{1}, H_{2}, B_{3}$, and $H_{4}$, respectively. 

In fact, performing direct calculations, we obtain the following lemma: 
%which is used in Section~$3. 5$ to obtain expressions for $\widetilde{Q}$ and $\widetilde{R}$: 
\begin{lem}\label{lem:3-2}
Let us write $y_{i} (a, b; c; x),\ y_{i} (a q^{\pm 1}, b; c; x),\ y_{i} (a, b q^{\pm 1}; c; x)$, 
$y_{i} (a, b; c q^{\pm 1}; x)$, and $y_{i} (a, b; c; x q^{\pm 1})$ 
as $y_{i},\ y_{i} (a q^{\pm 1}),\ y_{i} (b q^{\pm 1}),\ y_{i} (c q^{\pm 1})$, 
and $y_{i} (x q^{\pm 1})$, respectively. 
Let $H_{j}$ and $B_{j}$ denote $H_{j} (a, b, c; x)$ and $B_{j} (a, b, c; x)$, respectively. 
Then, for $i = 1$ and $2$, we have 
\begin{align*}
H_{1} y_{i} &= y_{i} (a q), 
& 
B_{1} y_{i} &= y_{i} (a q^{-1}), \allowdisplaybreaks \\
H_{2} y_{i} &= y_{i} (b q), 
& 
B_{2} y_{i} &= y_{i} (b q^{-1}), \allowdisplaybreaks \\
H_{3} y_{i} &= y_{i} (c q), 
& 
B_{3} y_{i} &= y_{i} (c q^{-1}). 
\end{align*}
Also, for $i = 3$ and $4$, we have 
\begin{align*}
H_{1} y_{i} &= - a \cdot y_{i} (a q), 
& 
B_{1} y_{i} &= - q a^{-1} \cdot y_{i} (a q^{-1}), \allowdisplaybreaks \\
H_{2} y_{i} &= - b \cdot y_{i} (b q), 
& 
B_{2} y_{i} &= - q b^{-1} \cdot y_{i} (b q^{-1}), \allowdisplaybreaks \\
H_{3} y_{i} &= - c^{-1} \cdot y_{i} (c q), 
& 
B_{3} y_{i} &= - c q^{-1} \cdot y_{i} (c q^{-1}). 
\end{align*}
Moreover, for $i = 1, 2, 3$, and $4$, we have 
\begin{align*}
H_{4} y_{i} &= y_{i} (x q), 
&
B_{4} y_{i} &= y_{i} (x q^{-1}). 
\end{align*}
\end{lem}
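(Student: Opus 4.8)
The plan is to verify Lemma~\ref{lem:3-2} by a sequence of direct but carefully organized computations. The key observation is that each contiguity operator is built from the shift operator $T_{q}$, and each $y_{i}$ is (up to an explicit prefactor in $x$) a $\widetilde{\phi}_{D}$-type series of the shape $\f{\cdot}{\cdot}{\cdot}{\cdot}$ in a variable that is either $x$ or $cq/(abx)$. So for each of the twelve shift identities I would expand $\f{\cdot}{\cdot}{\cdot}{w}=\sum_i \bigl((q)_\infty (c')_\infty / ((a')_\infty (b')_\infty)\bigr)\,\frac{(a')_i(b')_i}{(q)_i(c')_i}w^i$, apply $T_q$ termwise (which multiplies the $i$-th term by $q^i$ when acting on $x$, or by $q^{-i}$ when acting on $cq/(abx)$, up to the prefactor), and then show that the resulting linear combination collapses, coefficient by coefficient in the series variable, to the series with the shifted parameter. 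Since these are exactly the shift relations that $H_1,H_2,B_3$ encode for Heine's ${}_2\phi_1$, the cases $i=1,2$ for $H_1,H_2,B_1,B_2,B_3,H_3$ are classical contiguous relations rewritten in operator form; I would either cite Heine's formulas as the paper already does for $H_1,H_2,B_3$, or just record the one-line termwise check.

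First I would handle $H_4$ and $B_4$, which are trivial: $H_4=T_q$ sends $y_i(x)$ to $y_i(xq)$ by definition, and $B_4$ is constructed (via the IKSY trick) precisely as the inverse shift on the solution space, so $B_4 y_i = y_i(xq^{-1})$; the only thing to check is that $B_4$ maps solutions of $E_q(a,b,c;x)$ to solutions, equivalently that $(q-x)^{-1}\{c+q-(a+b)x-(c-abx)T_q\}$ commutes appropriately with $L$, which is a short computation using $L = (1-T_q)(1-cq^{-1}T_q)-x(1-aT_q)(1-bT_q)$. Next I would do the $i=1,2$ block for the parameter shifts: $H_1 y_i = y_i(aq)$ and $B_1 y_i = y_i(aq^{-1})$, and symmetrically for $b$; and $H_3 y_i = y_i(cq)$, $B_3 y_i = y_i(cq^{-1})$. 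Here the cleanest route is to note $y_1 = \f{a}{b}{c}{x}$ and $y_2 = x^{1-\gamma}\f{aq/c}{bq/c}{q^2/c}{x}$, so a shift in $a$ or $b$ acts the same way on the numerator parameters of both (for $y_2$ it shifts $aq/c$, i.e. again $a\mapsto aq$), while a shift in $c$ also moves the prefactor $x^{1-\gamma}$ and the denominator — I would just confirm the prefactor bookkeeping matches the stated identity with no extra constant, which is why for $i=1,2$ the eigenvalue is exactly $1$.

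The genuinely substantive part is the $i=3,4$ block, where the eigenvalues $-a$, $-qa^{-1}$, $-b$, $-qb^{-1}$, $-c^{-1}$, $-cq^{-1}$ appear. Here $y_3 = a^{\gamma-\alpha-\beta+1}x^{-\alpha}\f{a}{aq/c}{aq/b}{cq/(abx)}$, so $x^{-\alpha}=a^{-\log_q x}$ contributes a factor under $a\mapsto aq$, the prefactor $a^{\gamma-\alpha-\beta+1}$ contributes another, the argument $cq/(abx)$ changes, and the numerator/denominator $q$-shifted factorials change; the claim is that all of this reorganizes into $-a$ times $y_3(aq)$. I expect this to be the main obstacle: tracking the powers of $a$ coming from the three sources ($a^{\gamma-\alpha-\beta+1}$, $x^{-\alpha}$, and the change of variable inside the argument of $\widetilde{\phi}_{D}$) and showing they combine with the contiguous-relation coefficients to give precisely the constant $-a$ (and dually $-qa^{-1}$ for $B_1$), and likewise for the $b$- and $c$-shifts. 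My strategy for this is to use the three identities recorded just before Section~3.1 — $(a)_i=(a^{-1}q^{1-i})_i(-a)^i q^{i(i-1)/2}$ and its relatives — to convert between the "$x=0$" form and the "$x=\infty$" form of each solution, reducing the $i=3,4$ statements to the already-established $i=1,2$ statements under the inversion $x\leftrightarrow cq/(abx)$ together with parameter relabelling; the leftover scalar from this inversion is exactly the stated eigenvalue. I would present the $H_1$ case in full as the model computation and remark that $H_2,B_1,B_2$ follow by the $a\leftrightarrow b$ symmetry and by inverting $H_1$, and that $H_3,B_3$ follow the same pattern with the $c$-prefactor; the $H_4,B_4$ cases being immediate completes the lemma.
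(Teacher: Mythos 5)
Your plan is correct and matches the paper's approach: the paper justifies this lemma only with the phrase ``performing direct calculations,'' and your termwise verification --- classical contiguous relations for $i=1,2$, the IKSY-style inversion for the $B_j$, and careful tracking of the powers of $a,b,c$ contributed by the prefactors $a^{\gamma-\alpha-\beta+1}x^{-\alpha}$ and the argument $cq/(abx)$ to extract the eigenvalues $-a$, $-qa^{-1}$, etc.\ for $i=3,4$ --- is exactly that calculation carried out. The one caution is that your proposed reduction of the $i=3,4$ block to the $i=1,2$ block via the substitution $x\leftrightarrow cq/(abx)$ is not a literal transfer of the earlier identities, because $T_{q}$ in $x$ becomes the \emph{inverse} shift in the new variable, so the operator $1-aT_{q}$ does not relabel into one of the listed contiguity operators; however, the direct series computation you describe as the fallback does go through and produces the stated scalars.
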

Let $S_{q} (a, b, c; x)$ denote the solution space of $E_{q} (a, b, c; x)$ 
on a simply connected domain in $\mathbb{C} \,\backslash \left\{0 \right\}$. 
Then, from Lemmas~$\ref{lem:3-1}$ and $\ref{lem:3-2}$, 
under the assumption $(\ref{assump})$, 
combining $H_{j}$ and $B_{j}$ ($j = 1, 2, 3, 4$), 
we obtain a linear isomorphism 
\begin{align*}
\theta (k, l, m; n) : S_{q} (a, b, c; x) \to S_{q} (a q^{k}, b q^{l}, c q^{m}; x q^{n}) 
\end{align*}
for any integers $k, l, m$, and $n$. 
Note that the definition of $\theta (k, l, m; n)$ does not depend on the order of composition of contiguity operators. 
Here, we set the order as follows: 
\begin{align*}
S_{q} (a, b, c; x) \to \dotsm 
&\to S_{q} (a q^{k}, b, c; x) \to \dotsm \to S_{q} (a q^{k}, b q^{l}, c; x) \to \dotsm \\
&\to S_{q} (a q^{k}, b q^{l}, c q^{m}; x) \to \dotsm \to S_{q} (a q^{k}, b q^{l}, c q^{m}; x q^{n}). 
\end{align*}
%Then, from Lemmas~$\ref{lem:3-1}$ and $\ref{lem:3-2}$, under the assumption $(\ref{assump})$, the mappings 
%\begin{align*}
%H_{1} (a, b, c; x) &: S_{q} (a, b, c; x) \to S_{q} (a q, b, c; x), \allowdisplaybreaks \\
%H_{2} (a, b, c; x) &: S_{q} (a, b, c; x) \to S_{q} (a, b q, c; x), \allowdisplaybreaks \\
%H_{3} (a, b, c; x) &: S_{q} (a, b, c; x) \to S_{q} (a, b, c q; x), \allowdisplaybreaks \\
%H_{4} (a, b, c; x) &: S_{q} (a, b, c; x) \to S_{q} (a, b, c; x q) 
%\end{align*}
%are linear isomorphisms, and their inverse mappings are given by 
%$B_{1} (a q, b, c; x),\ B_{2} (a, b q, c; x),\ B_{3} (a, b, c q; x)$, 
%and $B_{4} (a, b, c; x q)$, respectively. 
%Therefore, 
%combining $H_{j}$ and $B_{j}$ ($j = 1, 2, 3, 4$), 
%we obtain a linear isomorphism 
%\begin{align*}
%\theta (k, l, m; n) : S_{q} (a, b, c; x) \to S_{q} (a q^{k}, b q^{l}, c q^{m}; x q^{n}) 
%\end{align*}
%for any integers $k, l, m$, and $n$. 

\subsection{Expressions for each $\widetilde{Q}$ and $\widetilde{R}$ as a ratio of infinite series}\quad 
We express each $\widetilde{Q}$ and $\widetilde{R}$ as a ratio of infinite series 
defined as a sum of products of $y_{i}$ ($i = 1, 2, 3, 4$). 

Put $\Delta := x^{-1} (1 - T_{q})$. 
By direct calculations, we obtain the following lemma: 
\begin{lem}\label{lem:3-3}
Let $y_{i} (a, b, c)$ denote $y_{i} (a, b; c; x)$. Then, we have 
\begin{align*}
\Delta y_{i} (a, b, c) = 
\begin{cases}
y_{i} (a q, b q, c q), & i = 1, 2, \\
- a b c^{-1} \cdot y_{i} (a q, b q, c q), & i = 3, 4. 
\end{cases}
\end{align*}
\end{lem}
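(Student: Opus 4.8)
The plan is to verify the identity $\Delta y_i(a,b,c) = \Delta y_i(a,b;c;x)$ equals the stated multiple of $y_i(aq,bq;cq;x)$ for each $i$ by direct computation on the explicit series definitions given in Section~3.3, using only the definition $\Delta = x^{-1}(1 - T_q)$ and standard $q$-shifted factorial manipulations. Since $\Delta$ is a first-order $q$-difference operator, it suffices to expand each $y_i$ as a (possibly shifted) power series in $x$, apply $1 - T_q$ term by term, divide by $x$, and recognize the result as the corresponding series with parameters shifted by $q$.

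For $i = 1$, write $y_1(a,b;c;x) = {}_{2}\widetilde{\phi}_{1}(a,b;c;x) = \frac{(q)_\infty (c)_\infty}{(a)_\infty (b)_\infty} \sum_{j\ge 0} \frac{(a)_j (b)_j}{(q)_j (c)_j} x^j$. Applying $1 - T_q$ multiplies the $j$-th term by $1 - q^j$, and dividing by $x$ shifts the index; the key algebraic step is the identity $\frac{(a)_j (b)_j}{(q)_j (c)_j}(1-q^{j}) = \frac{(aq)_{j-1}(bq)_{j-1}}{(q)_{j-1}(cq)_{j-1}} \cdot \frac{(1-a)(1-b)}{(1-c)}\cdot(\text{correction})$; one checks the prefactor $\frac{(q)_\infty(c)_\infty}{(a)_\infty(b)_\infty}$ versus $\frac{(q)_\infty(cq)_\infty}{(aq)_\infty(bq)_\infty}$ absorbs the residual constants exactly, yielding $y_1(aq,bq;cq;x)$ with coefficient $1$. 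For $i = 2$, the computation is the same but carried out on $x^{1-\gamma}\,{}_{2}\widetilde{\phi}_{1}(aq/c,bq/c;q^2/c;x)$; the factor $x^{1-\gamma}$ commutes with $\Delta$ only up to the shift $x^{1-\gamma} \mapsto (qx)^{1-\gamma} = q^{1-\gamma}x^{1-\gamma}$, and one must track that the parameter shift $c \mapsto cq$ sends $x^{1-\gamma}$ to $x^{-\gamma} = x^{1-(\gamma+1)}$ correctly, matching $y_2(aq,bq;cq;x)$.

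For $i = 3, 4$, the series are in the variable $cq/(abx)$ rather than $x$, with an overall factor $x^{-\alpha}$ (resp.\ $x^{-\beta}$) and a constant $a^{\gamma-\alpha-\beta+1}$ (resp.\ $b^{\gamma-\alpha-\beta+1}$). Here the operator $T_q$ sends $x \mapsto qx$, hence $cq/(abx) \mapsto cq/(abq x) = q^{-1}\cdot cq/(abx)$, so applying $1 - T_q$ and dividing by $x$ acts as a \emph{backward} shift on the summation index of the inner ${}_{2}\widetilde{\phi}_{1}$; combined with the shift of $x^{-\alpha}$ and the change $a = q^\alpha \mapsto aq = q^{\alpha+1}$, $c = q^\gamma \mapsto cq = q^{\gamma+1}$ in the constant prefactor, one extracts the explicit factor $-abc^{-1}$. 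The bookkeeping of the constants $a^{\gamma-\alpha-\beta+1}$ under simultaneous shifts $\alpha \mapsto \alpha+1$, $\beta \mapsto \beta+1$, $\gamma \mapsto \gamma+1$ (which leaves $\gamma-\alpha-\beta+1$ unchanged but multiplies $a^{\gamma-\alpha-\beta+1}$ by nothing, while the base $a$ itself changes) is the most error-prone part.

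The main obstacle I expect is precisely this constant-tracking in the $i=3,4$ cases: one must simultaneously account for (i) the shift of the algebraic prefactor $a^{\gamma-\alpha-\beta+1}x^{-\alpha}$ under $\theta$, (ii) the fact that $\Delta$ reverses the direction of the index shift because the argument $cq/(abx)$ is inversely proportional to $x$, and (iii) the telescoping identity on $(a)_j(aq/c)_j/[(q)_j(aq/b)_j]$ with argument $cq/(abx)$. All of these are routine $q$-factorial manipulations, so no conceptual difficulty arises — the lemma follows by \emph{direct calculation} exactly as the paper states — but assembling the four cases without sign or exponent errors requires care. I would organize the proof as four short paragraphs, one per $y_i$, each reducing to a single displayed coefficient identity.
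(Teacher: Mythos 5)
Your plan is correct and is exactly what the paper does: Lemma~$\ref{lem:3-3}$ is stated with no more justification than ``by direct calculations,'' and the term-by-term series computation you outline (the $q$-derivative identity for $i=1,2$, and the backward shift of the argument $cq/(abx)$ together with the bookkeeping of $a^{\gamma-\alpha-\beta+1}x^{-\alpha}$ for $i=3,4$) is the intended verification. The only small imprecision is calling the $i=2$ case ``the same'' as $i=1$: there the operator produced is $1-(q/c)T_q$ acting on a series with parameters $(aq/c,bq/c,q^{2}/c)$, which lowers the denominator parameter rather than acting as the $q$-derivative, but you do flag the $x^{1-\gamma}$ tracking that makes this work out.
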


Let $\mathbb{Q} (a, b, c, q, x)$ denote the field generated by $a, b, c, q$, and $x$ over $\mathbb{Q}$, 
and let us write the ring of polynomials in $T_{q}$ over $\mathbb{Q} (a, b, c, q, x)$ 
as $\mathbb{Q} (a, b, c, q, x) [T_{q}]$, where $T_{q}$ and $x$ are not commutative because $T_{q} x = q x T_{q}$. 
Then, regarding $\theta (k, l, m; n)$ as an element of $\mathbb{Q} (a, b, c, q, x) [T_{q}]$, 
we express $\theta (k, l, m; n)$ as 
\begin{align*}
\theta (k, l, m; n) = p (T_{q}) \cdot L + \hat{Q} \cdot T_{q} + \hat{R}, 
\end{align*}
where $\hat{Q}, \hat{R} \in \mathbb{Q} (a, b, c, q, x)$ and 
$p (T_{q}) \in \mathbb{Q} (a, b, c, q, x) [T_{q}]$. 
From $T_{q} = 1 - x \Delta$, this expression can be rewritten as 
\begin{align}\label{theta}
\theta (k, l, m; n) = p (T_{q}) \cdot L - x \hat{Q} \cdot \Delta + (\hat{Q} + \hat{R}). 
\end{align}
From Lemmas~$\ref{lem:3-1},\ \ref{lem:3-2}$, and $\ref{lem:3-3}$, 
operating $(\ref{theta})$ on $y_{1} (a, b; c; x)$ gives 
\begin{align}\label{gen.3tr''}
\f{a q^{k}}{b q^{l}}{c q^{m}}{x q^{n}} = - x \hat{Q} \cdot \f{a q}{b q}{c q}{x} + (\hat{Q} + \hat{R}) \cdot \f{a}{b}{c}{x}. 
\end{align}
Since the pair $(\widetilde{Q}, \widetilde{R})$ satisfying $(\ref{gen.3tr'})$ 
is unique as a consequence of Section~$3. 1$, comparing $(\ref{gen.3tr''})$ with $(\ref{gen.3tr'})$, we find that 
$- x \hat{Q} = \widetilde{Q}$ and $\hat{Q} + \hat{R} = \widetilde{R}$. 
Namely, we obtain 
\begin{align}\label{theta'}
\theta (k, l, m; n) = p (T_{q}) \cdot L + \widetilde{Q} \cdot \Delta + \widetilde{R}. 
\end{align}
From Lemmas~$\ref{lem:3-1},\ \ref{lem:3-2}$, and $\ref{lem:3-3}$, 
operating $(\ref{theta'})$ on $y_{i} (a, b; c; x)$ ($i = 1, 2, 3, 4$) gives 
\begin{align}
\y{i}{a q^{k}}{b q^{l}}{c q^{m}}{x q^{n}} 
&= \widetilde{Q} \cdot \y{i}{a q}{b q}{c q}{x} + \widetilde{R} \cdot \y{i}{a}{b}{c}{x}, \quad i = 1, 2, 
\label{gen.3tr_y1,y2} \allowdisplaybreaks \\
\lambda \cdot \y{i}{a q^{k}}{b q^{l}}{c q^{m}}{x q^{n}} 
&= - a b c^{-1} \widetilde{Q} \cdot \y{i}{a q}{b q}{c q}{x} + \widetilde{R} \cdot \y{i}{a}{b}{c}{x}, \quad i = 3, 4, 
\label{gen.3tr_y3,y4}
\end{align}
where $\lambda := (-1)^{k + l - m} a^{k} b^{l} c^{-m} q^{\left\{k (k - 1) + l (l - 1) - m (m - 1) \right\} / 2}$. 
Solving two equations $(\ref{gen.3tr_y1,y2})$ for $\widetilde{Q}$ and $\widetilde{R}$, we have 
\begin{align}
\widetilde{Q} = \widetilde{Q} (k, l, m, n) 
&= \Y{k}{l}{m}{n}{a}{b}{c}{x} \,\Bigg/\, \Y{1}{1}{1}{0}{a}{b}{c}{x}, \label{gen.tQ,Y} \allowdisplaybreaks \\
\widetilde{R} = \widetilde{R} (k, l, m, n) 
&= - \Y{k - 1}{l - 1}{m - 1}{n}{a q}{b q}{c q}{x} \,\Bigg/\, \Y{1}{1}{1}{0}{a}{b}{c}{x}, \label{gen.tR,Y}
\end{align}
where $Y$ is the infinite series defined by 
\begin{align}\label{gen.def:Y}
\Y{k}{l}{m}{n}{a}{b}{c}{x} := 
\y{1}{a q^{k}}{b q^{l}}{c q^{m}}{x q^{n}} 
\y{2}{a}{b}{c}{x} 
- \y{2}{a q^{k}}{b q^{l}}{c q^{m}}{x q^{n}} 
\y{1}{a}{b}{c}{x}. 
\end{align}
On the other hand, solving two equations $(\ref{gen.3tr_y3,y4})$ for $\widetilde{Q}$, we have 
\begin{align}
\widetilde{Q} = \widetilde{Q} (k, l, m, n) 
&= - \lambda c (a b)^{-1} \cdot 
\Z{k}{l}{m}{n}{a}{b}{c}{x} \,\Bigg/\, \Z{1}{1}{1}{0}{a}{b}{c}{x}, \label{gen.tQ,Z} 
\end{align}
where $\widetilde{Y}$ is the infinite series defined by 
\begin{align}\label{gen.def:Z}
\Z{k}{l}{m}{n}{a}{b}{c}{x} := 
\y{3}{a q^{k}}{b q^{l}}{c q^{m}}{x q^{n}} 
\y{4}{a}{b}{c}{x} 
- \y{4}{a q^{k}}{b q^{l}}{c q^{m}}{x q^{n}} 
\y{3}{a}{b}{c}{x}. 
\end{align}

In the next two sections, we use $(\ref{gen.tQ,Y})$--$(\ref{gen.def:Y})$ for proving Theorem~$\ref{gen.main}$, 
and use $(\ref{gen.tQ,Z})$ and $(\ref{gen.def:Z})$ for proving Proposition~$\ref{gen.main2}$. 

\begin{remark}\label{rem2}
The Casoratians noted in Remark~$\ref{rem1}$ 
are appearing in the denominators of $(\ref{gen.tQ,Y}),\ (\ref{gen.tR,Y})$, and $(\ref{gen.tQ,Z})$ as follows: 
\begin{align*}
\Y{1}{1}{1}{0}{a}{b}{c}{x} 
&= 
- \det 
\left( 
\begin{array}{rr}
y_{1} & y_{2} \\
\Delta y_{1} & \Delta y_{2} 
\end{array}
\right) 
= x^{-1} \cdot 
\det 
\left( 
\begin{array}{rr}
y_{1} & y_{2} \\
T_{q} y_{1} & T_{q} y_{2} 
\end{array}
\right), 
\allowdisplaybreaks \\
\Z{1}{1}{1}{0}{a}{b}{c}{x} 
&= c (a b)^{-1} \cdot 
\det 
\left( 
\begin{array}{rr}
y_{3} & y_{4} \\
\Delta y_{3} & \Delta y_{4} 
\end{array}
\right) 
= - c (a b x)^{-1} \cdot 
\det 
\left( 
\begin{array}{rr}
y_{3} & y_{4} \\
T_{q} y_{3} & T_{q} y_{4} 
\end{array}
\right). 
\end{align*}
\end{remark}

\subsection{Expressions for $Q$ and $R$}\quad 
We complete the proof of Theorem~$\ref{gen.main}$. 

To prove Theorem~$\ref{gen.main}$, we give four lemmas. 
From Lemma~$\ref{Gasper.tf1,2}$, we obtain the following lemma: 
\begin{lem}\label{gen.lem:(i)-(iv)}
For any integers $k, l, m$, and $n$, with $k \leq l$, the following statements are true: 
\begin{align}
\tag{i} k + l - m + n \geq 0, \; n \geq 0 \; &\Rightarrow \; 
\sum_{i = 0}^{n} \frac{(q^{-n})_{i}}{(q)_{i}} q^{n i} (A_{j - i} - B_{j - i - m}) = 0, 
\; j \geq l + n; \allowdisplaybreaks \\
\tag{ii} k + l - m + n \geq 0, \; n \leq 0 \; &\Rightarrow \; 
A_{j} - B_{j - m} = 0, \; j \geq l; \allowdisplaybreaks \\
\tag{iii} k + l - m + n \leq 0, \; n \geq 0 \; &\Rightarrow \; 
\widetilde{A}_{j} - \widetilde{B}_{j - m} = 0, \; j \geq m - k; \allowdisplaybreaks \\
\tag{iv} k + l - m + n \leq 0, \; n \leq 0 \; &\Rightarrow \; 
\sum_{i = 0}^{-n} \frac{(q^{n})_{i}}{(q)_{i}} (\widetilde{A}_{j - i} - \widetilde{B}_{j - i - m}) = 0, 
\; j \geq m - k - n. 
\end{align}
\end{lem}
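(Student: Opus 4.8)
The plan is to derive each of the four vanishing statements (i)--(iv) by recognizing the inner ${}_4\phi_3$'s appearing in $A_j, B_j, \widetilde A_j, \widetilde B_j$ as instances of the transformation formulas in Lemma~$\ref{Gasper.tf1,2}$ (and its $m=0,-1$ specializations in Section~$2.2$), so that the claimed identities become the statements that certain Gasper-type transformations hold term-by-term in the degree variable $j$. Concretely, I would first fix $k\le l$ and treat the two ${}_4\phi_3$ series in $A_j$ and $B_{j-m}$ (resp.\ $\widetilde A_j$ and $\widetilde B_{j-m}$) as the two sides of a specialization of $(\ref{Gasper.tf1})$ with $r=2$; the parameter matching is exactly the kind of bookkeeping done throughout Section~$3$, using the displayed $q$-shifted-factorial identities. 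Once the parameters are matched, statements (ii) and (iii) assert that $A_j=B_{j-m}$ and $\widetilde A_j=\widetilde B_{j-m}$ for $j$ large enough, which is precisely $(\ref{Gasper.tf1})$ applied with the relevant nonnegative-integer parameters; the range $j\ge l$ (resp.\ $j\ge m-k$) is exactly the range in which all the relevant $q^{-m}$-type and $q^{-n_\nu}$-type upper parameters become genuine nonpositive powers of $q$ so that the series terminate and the convergence hypothesis $|a^{-1}q^{m+1-(n_1+n_2)-s}|<1$ is vacuous.

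Next, for the ``summed'' statements (i) and (iv), which involve $\sum_i \frac{(q^{-n})_i}{(q)_i}q^{ni}(\cdots)$ (resp.\ $\sum_i \frac{(q^{n})_i}{(q)_i}(\cdots)$), I would recognize these sums as the left- and right-hand sides of $(\ref{Gasper.tf2})$ with $s=n$ (resp.\ $s=-n$) and $r=2$. The point is that $(\ref{Gasper.tf2})$ is stated as an identity between two finite sums of ${}_{r+2}\phi_{r+1}$'s weighted by $\frac{(q^{-s})_i}{(q)_i}q^{si}$ and $\frac{(q^{-s})_i}{(q)_i}q^{i}$ respectively; after the parameter substitution dictated by the definitions of $A_j, B_j$, one side of $(\ref{Gasper.tf2})$ becomes $\sum_i \frac{(q^{-n})_i}{(q)_i}q^{ni}A_{j-i}$ and the other becomes $\sum_i \frac{(q^{-n})_i}{(q)_i}q^{ni}B_{j-i-m}$ (up to a common nonzero factor), so their equality is exactly statement~(i); statement~(iv) is the analogous reading of $(\ref{Gasper.tf2})$ with the roles of the $q^{s}$- and $q^{1-s}$-type series adapted to the $k+l-m+n<0$ regime, where instead the $m=-1$ specialization $(\ref{Gasper.sf3})$--$(\ref{Gasper.sf4})$ or a sign-flipped version of the matching is relevant. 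The condition $\min\{n_1,\dots,n_r,s\}=s$ in Lemma~$\ref{Gasper.tf1,2}$ has to be checked in each case; with $r=2$ and the $n_\nu$ coming from the exponents $k-m, l-m$ (or their shifts), the hypothesis $k\le l$ together with $j$ in the stated range guarantees it.

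In carrying this out I would organize the proof as four parallel parameter-matching computations, in each case: (1) write down the two ${}_4\phi_3$'s from the relevant $A,B$ (or $\widetilde A,\widetilde B$); (2) identify $a,b,c_1,c_2$ and $n_1,n_2$ and $m$ and $s$ in $(\ref{Gasper.tf1})$ or $(\ref{Gasper.tf2})$; (3) verify that the prefactors (the $q$-shifted factorials and powers of $b$, $c q^{m-j-1}$, etc.) match after applying the reflection and splitting identities recalled before Section~$2.1$; (4) read off the vanishing. Since the statements assert equalities valid only for $j$ in a one-sided range, I would also note that for $j$ outside that range the individual terms need not terminate, but within the range every relevant series is a ${}_4\phi_3$ with a $q^{-j}$ (or $q^{-m}$-type) numerator parameter, hence finite, and the convergence side-conditions of Lemma~$\ref{Gasper.tf1,2}$ are automatically satisfied.

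The main obstacle I anticipate is purely organizational: getting the parameter dictionary exactly right in all four cases, in particular tracking the powers of $q$ in the argument of the inner ${}_4\phi_3$ (which is $q^{1-n}$, $q^{k+l-m+n+1}$, $q^{m-k-l-n+1}$, or $q^{1+n}$ depending on the case) and matching them against $q^{1+s}$ in $(\ref{Gasper.tf1})$ or $q^{1+s}$/$q^{1-s}$ in $(\ref{Gasper.tf2})$ with the correct identification of $s$ and of which series plays the role of the left-hand side. A secondary subtlety is the index shift $B_{j}\mapsto B_{j-m}$ and the appearance of $\max\{m,0\}$ vs.\ $m$ in the definition of $P$ in Theorem~$\ref{gen.main}$: one must check that in statements (i)--(iv), where $P$ is being expanded, the relevant shift is consistently $m$ and not $\max\{m,0\}$, which follows because (i)--(iv) are exactly the ``interior'' vanishing that lets one truncate the double sum defining $P$ at $j=d$. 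None of these steps requires a new idea beyond Lemma~$\ref{Gasper.tf1,2}$; the work is in the verification.
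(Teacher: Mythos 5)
Your proposal is correct and follows essentially the same route as the paper: the paper proves (i) by substituting $m, n_{1}, n_{2}, s, a, b, c_{1}, c_{2} \mapsto j,\, j-k,\, j-l,\, n,\, q^{m-j},\, c q^{m-j-1},\, c q^{m-j}/a,\, c q^{m-j}/b$ into the $r=2$ case of $(\ref{Gasper.tf2})$ (so $s=n$, with the range $j \geq l+n$ coming from $\min\{j-k, j-l\} \geq n \geq 0$ exactly as you describe), and then handles (ii), (iii) via $(\ref{Gasper.tf1})$ and (iv) via $(\ref{Gasper.tf2})$ with $s=-n$ in the same manner. Your side remark that (iv) might instead need the $m=-1$ specializations $(\ref{Gasper.sf3})$--$(\ref{Gasper.sf4})$ is unnecessary --- the paper uses $(\ref{Gasper.tf2})$ directly --- but this does not affect the correctness of your plan.
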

\begin{proof}
Using $(\ref{Gasper.tf2})$, we prove (i). 
Replacing $m, n_{1}, n_{2}, s, a, b, c_{1}, c_{2}$ in the $r = 2$ case of $(\ref{Gasper.tf2})$ 
by $j, j - k, j - l, n, q^{m - j}, c q^{m - j - 1}, c q^{m - j} / a, c q^{m - j} / b$, respectively, 
we find that if 
$k + l - m + n \geq 0$ and $\min\left\{j - k, j - l \right\} \geq n \geq 0$, then 
\begin{align*}
&\sum_{i = 0}^{n} \frac{(q^{-n})_{i}}{(q)_{i}} q^{n i} 
\frac{(a q / c)_{j - i - m} (b q / c)_{j - i - m}}{(q)_{j - i - m} (q^{2} / c)_{j - i - m}} 
\frac{(q^{2} / c)_{- m - 1}}{(a q / c)_{k - m} (b q / c)_{l - m}} \\
&\quad \times 
\P{q^{- (j - i - m)}}{c q^{- (j - i - m) - 1}}{c q^{m - k} / a}{c q^{m - l} / b}{c q^{m}}{c q^{- (j - i - m)} / a}{c q^{- (j - i - m)} / b}{q^{k + l - m + n + 1}} 
\allowdisplaybreaks \\
&= - \sum_{i = 0}^{n} \frac{(q^{-n})_{i}}{(q)_{i}} q^{n i} 
\frac{(c)_{m - (j - i) - 1} (c q^{m - (j - i) - 1})^{1 - n}}{(q^{- (j - i)})_{j - i} (a)_{k - (j - i)} (b)_{l - (j - i)}} 
\P{q^{- (j - i)}}{c q^{m - (j - i) - 1}}{a}{b}{c}{a q^{k - (j - i)}}{b q^{l - (j - i)}}{q^{1 - n}}. 
\end{align*}
Moreover, multiplying both sides of this by $(a q / c)_{k - m} (b q / c)_{l - m} / (q^{2} / c)_{- m - 1}$, we have 
\begin{align*}
\sum_{i = 0}^{n} \frac{(q^{-n})_{i}}{(q)_{i}} q^{n i} B_{j - i - m} = \sum_{i = 0}^{n} \frac{(q^{-n})_{i}}{(q)_{i}} q^{n i} A_{j - i}. 
\end{align*}
Thus (i) is proved. 

In almost the same way, using $(\ref{Gasper.tf1})$, we are able to prove (ii) and (iii), 
and using $(\ref{Gasper.tf2})$, we are able to prove (iv). 
\end{proof}

To be used below, we introduce a formula: 
\begin{align}\label{q-binom'}
\sum_{i = 0}^{n} \frac{(q^{-n})_{i}}{(q)_{i}} x^{i} = (x q^{-n})_{n}, \quad n = 0, 1, \dotsc. 
\end{align}
This is a special case of the $q$-binomial theorem 
\begin{align*}%\label{q-binom}
{}_{1}\phi_{0} \biggl(\genfrac..{0pt}{}{a}{\mbox{--}};q,\,x\biggr) = 
\sum_{i = 0}^{\infty} \frac{(a)_{i}}{(q)_{i}} x^{i} = \frac{(a x)_{\infty}}{(x)_{\infty}}, 
\quad \lvert x \rvert < 1. 
\end{align*}
(See \cite[pp.~488--490, Theorem~10.2.1]{AAR} and \cite[Section~1.3]{GR} 
for simple proofs of the $q$-binomial theorem.) 
We also use the following product formula: 
\begin{align}\label{2phi1,2phi1}
&\p{a}{b}{c}{g x} \p{d}{e}{f}{h x} \\
&= \sum_{j = 0}^{\infty} \frac{(a)_{j} (b)_{j}}{(q)_{j} (c)_{j}} g^{j} 
\P{q^{-j}}{q^{1 - j} / c}{d}{e}{q^{1 - j} / a}{q^{1 - j} / b}{f}{\frac{q c h}{a b g}} 
x^{j}, \quad \lvert g x \rvert < 1,\, \lvert h x \rvert < 1. \nonumber 
\end{align}
Collecting the coefficients of $x^{j}$ on the left-hand side gives this formula. 
%This follows from the power series expansion of the product on the left-hand side. 

From Lemma~$\ref{gen.lem:(i)-(iv)},\ (\ref{q-binom'})$, and $(\ref{2phi1,2phi1})$, we obtain the following lemma: 
%The following lemma asserts that $P$ can be expressed as a sum of products of two ${}_{2}\phi_{1}$. 
\begin{lem}\label{gen.lem:main}
When $k \leq l$, the polynomial $P$ defined in Theorem~$\ref{gen.main}$ can be rewritten in the following form: 
\begin{align}\label{gen.P_inf}
&P \c{k}{l}{m}{n}{a}{b}{c}{x} \\
&= 
\begin{cases}
\displaystyle
(x)_{\max\left\{n, 0 \right\}}\, x^{\max\left\{m, 0 \right\}} 
\left(x^{-m} \sum_{j = 0}^{\infty} A_{j} x^{j} - \sum_{j = 0}^{\infty} B_{j} x^{j} \right), & 
k + l - m + n \geq 0, \\
\displaystyle
(x q^{\min\left\{n, 0 \right\}})_{-\min\left\{n, 0 \right\}}\, x^{\max\left\{m, 0 \right\}} 
\left(x^{-m} \sum_{j = 0}^{\infty} \widetilde{A}_{j} x^{j} - \sum_{j = 0}^{\infty} \widetilde{B}_{j} x^{j} \right), & 
k + l - m + n < 0. 
\end{cases}\nonumber
\end{align}
Moreover, each of the infinite series $\sum_{j = 0}^{\infty} A_{j} x^{j},\ \sum_{j = 0}^{\infty} B_{j} x^{j}$, 
$\sum_{j = 0}^{\infty} \widetilde{A}_{j} x^{j}$, and $\sum_{j = 0}^{\infty} \widetilde{B}_{j} x^{j}$ 
can be rewritten as a product of two ${}_{2}\phi_{1}$ series as follows, respectively: 
\begin{align*}
&\frac{(a q / c)_{k - m} (b q / c)_{l - m} (c)_{m}}{(q^{2} / c)_{-m} (a)_{k} (b)_{l}} (c q^{m - 1})^{-n} 
\p{q^{1 - k} / a}{q^{1 - l} / b}{q^{2 - m} / c}{\frac{a b q^{k + l - m + n}}{c} x} \p{a}{b}{c}{x}, \allowdisplaybreaks \\
&\p{c q^{m - k} / a}{c q^{m - l} / b}{c q^{m}}{\frac{a b q^{k + l - m + n}}{c} x} 
\p{a q / c}{b q / c}{q^{2} / c}{x}, \allowdisplaybreaks \\
&\frac{(a q / c)_{k - m} (b q / c)_{l - m} (c)_{m}}{(q^{2} / c)_{-m} (a)_{k} (b)_{l}} (c q^{m - 1})^{-n} 
{}_{2}\phi_{1}\biggl(\genfrac..{0pt}{}{a q^{k + 1 - m} / c,\,b q^{l + 1 - m} / c}{q^{2 - m} / c};q, x q^{n}\biggr) 
{}_{2}\phi_{1}\biggl(\genfrac..{0pt}{}{c / a,\,c / b}{c};q, \frac{a b}{c} x\biggr), \allowdisplaybreaks \\
&\p{a q^{k}}{b q^{l}}{c q^{m}}{x q^{n}} \p{q / a}{q / b}{q^{2} / c}{\frac{a b}{c} x}. 
\end{align*}
\end{lem}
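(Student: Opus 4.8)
The plan is to prove Lemma~\ref{gen.lem:main} in two stages: first establish the closed forms of the four infinite series $\sum A_j x^j$, $\sum B_j x^j$, $\sum \widetilde{A}_j x^j$, $\sum \widetilde{B}_j x^j$ as products of two ${}_{2}\phi_{1}$ series, and then use these together with the vanishing relations of Lemma~\ref{gen.lem:(i)-(iv)} and the $q$-binomial identity $(\ref{q-binom'})$ to recognize that $P$ — which in Theorem~\ref{gen.main} is a \emph{finite} sum up to degree $d$ — coincides with the truncation of the stated combination of infinite series.

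First I would treat the four product formulas. Recall that each $A_j$, $B_j$, $\widetilde{A}_j$, $\widetilde{B}_j$ is itself a ${}_{4}\phi_{3}$ with a terminating top parameter $q^{-j}$; the product formula $(\ref{2phi1,2phi1})$ says precisely that the coefficient of $x^j$ in a product $\p{a'}{b'}{c'}{g x}\,\p{d'}{e'}{f'}{h x}$ is a ${}_{4}\phi_{3}$ of exactly this shape. So for each of the four series I would read off the correct parameters $a',b',c',d',e',f',g,h$ by matching the ${}_{4}\phi_{3}$ in the definition of $A_j$ (resp.\ $B_j$, $\widetilde{A}_j$, $\widetilde{B}_j$) against the right-hand side of $(\ref{2phi1,2phi1})$, then peel off the $j$-independent prefactor. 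For instance, for $\sum_j B_j x^j$ one compares
\[
\P{q^{-j}}{c q^{-j-1}}{c q^{m-k}/a}{c q^{m-l}/b}{c q^{m}}{c q^{-j}/a}{c q^{-j}/b}{q^{k+l-m+n+1}}
\]
with the ${}_{4}\phi_{3}$ in $(\ref{2phi1,2phi1})$ having top row $q^{-j},\,q^{1-j}/c,\,d,\,e$ and bottom row $q^{1-j}/a,\,q^{1-j}/b,\,f$: this forces $c'=c$ (so the second slot $q^{1-j}/c$ matches $c q^{-j-1}$), $a'=c q^{m-k}/a$, $b'=c q^{m-l}/b$, $d=c q^{m}$, and then $f = q^{2}/c$, $g\cdot$(something)$=1$ etc., and the argument $q c h/(a'b'g) = q^{k+l-m+n+1}$ pins down $h$. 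The remaining bookkeeping is the $q$-shifted-factorial gymnastics in the ``identities used without comment'' displayed before Section~2.1, reversing $(q^{-j})_i$-type factors into ordinary ones; this is routine but needs care with signs and powers of $q$. I would do $B_j$ and $\widetilde{B}_j$ first (no ugly prefactor), then $A_j$ and $\widetilde{A}_j$ (which carry the prefactor $\frac{(aq/c)_{k-m}(bq/c)_{l-m}(c)_m}{(q^2/c)_{-m}(a)_k(b)_l}(cq^{m-1})^{-n}$, obtained by dividing the $j$-independent pieces of the ${}_{4}\phi_{3}$-product formula into the explicit prefactor in the definition of $A_j$).

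Second, granting the product formulas, I would prove $(\ref{gen.P_inf})$. In the case $k+l-m+n\ge 0$, $n\ge 0$, start from the definition of $P$ in Theorem~\ref{gen.main}: the double sum $\sum_{j=0}^{d}\sum_{i=0}^{n}\frac{(q^{-n})_i}{(q)_i}q^{ni}(A_{j-i+m-\max\{m,0\}}-B_{j-i-\max\{m,0\}})x^j$. Factor out $x^{\max\{m,0\}}$ and shift the $j$-index, then extend the outer sum over $j$ from $d$ to $\infty$: the point of Lemma~\ref{gen.lem:(i)-(iv)}(i) is exactly that the added terms (those with $j$ large) vanish, so the finite sum equals the corresponding infinite sum. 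At that point the inner $i$-sum decouples and Tonelli/rearrangement lets me write it as $\bigl(\sum_{i=0}^{n}\frac{(q^{-n})_i}{(q)_i}q^{ni}x^{-i}\bigr)\cdot\bigl(x^{-m}\sum_j A_j x^j-\sum_j B_j x^j\bigr)\cdot x^{\max\{m,0\}}$ after relabeling; applying $(\ref{q-binom'})$ with $x\mapsto x^{-1}q^{n}$ (or the appropriate substitution, giving $(x)_n$ up to a monomial that combines with the $q^{ni}$) collapses the $i$-sum to $(x)_{\max\{n,0\}}$. The cases $n\le 0$ (where $P$ is a single sum over $j$ with factor $x^{\max\{m,0\}}$ only, matching $(xq^{\min\{n,0\}})_{-\min\{n,0\}}=1$), and the two $k+l-m+n<0$ cases (same argument with $\widetilde A,\widetilde B$ and parts (iii),(iv) of Lemma~\ref{gen.lem:(i)-(iv)}, using $(\ref{q-binom'})$ in the form $\sum_{i=0}^{-n}\frac{(q^n)_i}{(q)_i}x^i=(xq^n)_{-n}$), are handled identically.

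The main obstacle I expect is \emph{not} the structural argument — which is a clean ``extend the finite sum to infinity via the vanishing lemma, then factor the convolution'' — but the verification, in the product-formula step, that the $j$-independent prefactors come out \emph{exactly} as written. Specifically, matching the arguments $\frac{ab q^{k+l-m+n}}{c}x$ and $x$ (resp.\ $\frac{ab}{c}x$ and $x q^n$) on the ${}_{2}\phi_{1}$ side against the single scaling variable and shift in $(\ref{2phi1,2phi1})$ requires correctly distributing the powers of $q$ between ``$gx$'', ``$hx$'', and the internal argument $qch/(abg)$; and extracting the prefactor for $A_j$ means rewriting several $(\,\cdot\,)_{k-j}$, $(q^{-j})_j$, $(c)_{m-j-1}$ factors via $(a)_{i-j}=\frac{(a)_i}{(a^{-1}q^{1-i})_j}(-a^{-1})^jq^{j(j+1)/2-ij}$ so that the $j$-dependence reassembles into the ${}_{4}\phi_{3}$ summand shape. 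This is bookkeeping-heavy and sign-sensitive, so I would organize it as a single lemma ``coefficient extraction'' applied four times rather than four separate computations.
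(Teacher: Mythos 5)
Your proposal follows essentially the same route as the paper's proof: the paper likewise uses Lemma~\ref{gen.lem:(i)-(iv)} to extend the finite $j$-sum to an infinite one, the vanishing of $A_{j}, B_{j}, \widetilde{A}_{j}, \widetilde{B}_{j}$ at negative indices to factor the convolution, the identity $(\ref{q-binom'})$ in the form $\sum_{i = 0}^{\max\{n, 0\}} \frac{(q^{-n})_{i}}{(q)_{i}} (x q^{n})^{i} = (x)_{\max\{n, 0\}}$ (so the monomial in your decoupled inner sum should be $x^{i}$, not $x^{-i}$) to collapse the $i$-sum, and $(\ref{2phi1,2phi1})$ to identify the four infinite series as products of two ${}_{2}\phi_{1}$ series. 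The only substantive difference is that you carry out the two stages in the opposite order, which is immaterial.
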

\begin{proof}
From Lemma~$\ref{gen.lem:(i)-(iv)}$, we obtain 
\begin{align}\label{lem:main-1}
&P \c{k}{l}{m}{n}{a}{b}{c}{x} \\
&= 
\begin{cases}
\displaystyle\sum_{j = 0}^{\infty} \sum_{i = 0}^{\max\left\{n, 0 \right\}} \frac{\left(q^{-n} \right)_{i}}{(q)_{i}} q^{n i} 
\left(A_{j - i + m - \max\left\{m, 0 \right\}} - B_{j - i - \max\left\{m, 0 \right\}} \right) x^{j}, 
& k + l - m + n \geq 0, \\
\displaystyle\sum_{j = 0}^{\infty} \sum_{i = 0}^{-\min\left\{n, 0 \right\}} \frac{\left(q^{n} \right)_{i}}{(q)_{i}} 
\left(\widetilde{A}_{j - i + m - \max\left\{m, 0 \right\}} - \widetilde{B}_{j - i - \max\left\{m, 0 \right\}} \right) x^{j}, 
& k + l - m + n < 0. 
\end{cases}\nonumber
\end{align}
Also, from $(\ref{q-binom'})$, we have
\begin{align}\label{lem:main-2}
\sum_{i = 0}^{\max\left\{n, 0 \right\}} \frac{\left(q^{-n} \right)_{i}}{(q)_{i}} (x q^{n})^{i} 
%&\quad= \quad\sum_{i = 0}^{\max\left\{n, 0 \right\}} 
%\frac{\left(q^{-\max\left\{n, 0 \right\}} \right)_{i}}{(q)_{i}} (x q^{\max\left\{n, 0 \right\}})^{i} & 
= (x)_{\max\left\{n, 0 \right\}}, \quad 
\sum_{i = 0}^{-\min\left\{n, 0 \right\}} \frac{\left(q^{n} \right)_{i}}{(q)_{i}} x^{i} 
%&\quad= \quad\sum_{i = 0}^{-\min\left\{n, 0 \right\}} \frac{\left(q^{\min\left\{n, 0 \right\}} \right)_{i}}{(q)_{i}} x^{i} & 
= (x q^{\min\left\{n, 0 \right\}})_{- \min\left\{n, 0 \right\}}. 
\end{align}
Using $(\ref{lem:main-1}),\ (\ref{lem:main-2})$, 
and the fact that $A_{j}, B_{j}, \widetilde{A}_{j}$, and $\widetilde{B}_{j}$ 
are equal to zero for any negative integer $j$, 
we obtain $(\ref{gen.P_inf})$. 
Also, it follows from $(\ref{2phi1,2phi1})$ that the statement following $(\ref{gen.P_inf})$ is true. 
Thus the lemma is proved. 
\end{proof}

Heine~\cite[p.~325, Formula~XVIII]{Heine} obtained the following $q$-Euler transformation formula: 
\begin{align}\label{Heine}
\p{a}{b}{c}{x} = \frac{(a b x / c)_{\infty}}{(x)_{\infty}} \p{c / a}{c / b}{c}{\frac{a b}{c} x}. 
\end{align}
%This is a $q$-analogue of Euler's transformation formula
%\begin{align*}
%\F{\alpha}{\beta}{\gamma}{x} = (1 - x)^{\gamma - \alpha - \beta} \F{\gamma - \alpha}{\gamma - \beta}{\gamma}{x}. 
%\end{align*}

From Lemma~$\ref{gen.lem:main}$ and $(\ref{Heine})$, we obtain the following lemma: 
\begin{lem}\label{gen.lem:Y,P}
When $k \leq l$, the series $Y$ defined by $(\ref{gen.def:Y})$ can be expressed as 
\begin{align*}
\Y{k}{l}{m}{n}{a}{b}{c}{x} 
= - \lambda_{1} \frac{(a)_{k} (b)_{l}}{(c)_{m}} 
x^{1 - \gamma - \max\left\{m, 0 \right\}} 
\frac{(a b q^{\max\left\{k + l - m + n, 0\right\}} x / c)_{\infty}}{(x q^{\min\left\{n, 0 \right\}})_{\infty}} 
P \c{k}{l}{m}{n}{a}{b}{c}{x}, 
\end{align*}
where 
$\lambda_{1} := 
(q)_{\infty}^{2} (c)_{\infty} (q^{2}/ c)_{\infty} \big/ \left\{(a)_{\infty} (b)_{\infty} (a q / c)_{\infty} (b q / c)_{\infty} \right\}$. 
\end{lem}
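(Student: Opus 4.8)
The plan is to compute the series $Y$ directly from its definition $(\ref{gen.def:Y})$ by expanding each $y_{i}$ in terms of the underlying ${}_{2}\phi_{1}$'s, and then to recognize the resulting expression as the product-of-two-${}_{2}\phi_{1}$ formulas already obtained in Lemma~$\ref{gen.lem:main}$. Concretely, recall that $y_{1}(a,b;c;x) = \f{a}{b}{c}{x}$ and $y_{2}(a,b;c;x) = x^{1-\gamma}\f{aq/c}{bq/c}{q^{2}/c}{x}$, and that $\widetilde{\phi}_{1}$ is just ${}_{2}\phi_{1}$ times the ratio of $q$-shifted factorials $(q)_{\infty}(c)_{\infty}/\{(a)_{\infty}(b)_{\infty}\}$. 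Substituting these into $(\ref{gen.def:Y})$, the two terms of $Y$ become, up to explicit prefactors,
\[
\f{a q^{k}}{b q^{l}}{c q^{m}}{x q^{n}}\, x^{1-\gamma}\f{a q/c}{b q/c}{q^{2}/c}{x}
\quad\text{minus}\quad
(x q^{n})^{1-\gamma-m}\f{a q^{k+1-m}/c}{b q^{l+1-m}/c}{q^{2-m}/c}{x q^{n}}\,\f{a}{b}{c}{x},
\]
so each term is, after converting back from $\widetilde{\phi}_{1}$ to ${}_{2}\phi_{1}$, precisely a constant times one of the four product expressions listed at the end of Lemma~$\ref{gen.lem:main}$ — the first term matching the product $\sum B_{j}x^{j}$ (equivalently the $\p{c q^{m-k}/a}{c q^{m-l}/b}{c q^{m}}{\cdot}\p{a q/c}{b q/c}{q^{2}/c}{x}$ form) after a use of Heine's transformation $(\ref{Heine})$, and the second term matching the product involving $\sum A_{j}x^{j}$.

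First I would carefully assemble the scalar prefactors: the conversion $\widetilde{\phi}_{1}\to{}_{2}\phi_{1}$ contributes $(q)_{\infty}(c q^{m})_{\infty}/\{(a q^{k})_{\infty}(b q^{l})_{\infty}\}$ from the first factor of each product and similar ratios from the others, and these combine — using $(c q^{m})_{\infty} = (c)_{\infty}/(c)_{m}$ etc. — into the claimed $\lambda_{1}(a)_{k}(b)_{l}/(c)_{m}$ together with the powers of $x$ and the $x^{1-\gamma}$ factors collapsing to $x^{1-\gamma-\max\{m,0\}}$. The powers $x^{1-\gamma}$ from $y_{2}$ and the shift $x\mapsto xq^{n}$ need to be tracked with care, especially the sign of $m$: when $m\geq 0$ the factor $x^{-m}$ is absorbed into $P$ via Lemma~$\ref{gen.lem:main}$'s bracket $x^{\max\{m,0\}}(x^{-m}\sum A_{j}x^{j} - \sum B_{j}x^{j})$, and when $m<0$ it is $\sum A_{j}x^{j}$ itself that carries the compensating power. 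Then I would invoke Lemma~$\ref{gen.lem:main}$ to replace the difference of the two products by $x^{\max\{m,0\}}$ times the bracket, multiply by the Pochhammer prefactor $(x)_{\max\{n,0\}}$ or $(xq^{\min\{n,0\}})_{-\min\{n,0\}}$ coming from the $q$-binomial identity $(\ref{q-binom'})$, and use $(\ref{Heine})$ to convert the $\frac{a b}{c}x$-argument ${}_{2}\phi_{1}$'s appearing there into the $(abq^{M}x/c)_{\infty}/(xq^{N})_{\infty}$ factor displayed in the lemma, where $M=\max\{k+l-m+n,0\}$ and $N=\min\{n,0\}$.

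The case split $k+l-m+n\geq 0$ versus $k+l-m+n<0$ has to be handled separately, since Lemma~$\ref{gen.lem:main}$ gives genuinely different product representations in the two regimes (the $A,B$ pair versus the $\widetilde{A},\widetilde{B}$ pair), and the argument $\frac{ab q^{k+l-m+n}}{c}x$ that appears inside the ${}_{2}\phi_{1}$'s must be matched against $\frac{ab q^{\max\{k+l-m+n,0\}}x}{c}$ in the final denominator — in the negative regime an extra use of Heine's transformation on one of the factors, or a direct manipulation of the $(\cdot)_{\infty}$ ratio, is what reconciles $q^{k+l-m+n}$ with $q^{0}$. I expect the main obstacle to be precisely this bookkeeping of prefactors and powers of $x$ across the sign cases: verifying that the assorted $q$-shifted factorials $(q^{2}/c)_{-m}$, $(aq/c)_{k-m}$, $(bq/c)_{l-m}$ from the definitions of $A_{j},\widetilde{A}_{j}$ cancel cleanly against those produced by the $\widetilde{\phi}_{1}\to{}_{2}\phi_{1}$ conversion and the Heine transformation, leaving exactly $-\lambda_{1}(a)_{k}(b)_{l}/(c)_{m}\cdot x^{1-\gamma-\max\{m,0\}}$ times the displayed infinite-product ratio times $P$. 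Everything else is routine once the two product representations from Lemma~$\ref{gen.lem:main}$ are in hand; the convergence restrictions $|\tfrac{ab}{c}x|<1$ etc. implicit in those products are harmless since the final identity is between rational-function-times-$P$ expressions and extends by analytic continuation.
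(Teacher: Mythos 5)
Your proposal is correct and follows essentially the same route as the paper: substitute the definitions of $y_{1}, y_{2}$ into $(\ref{gen.def:Y})$, apply Heine's transformation $(\ref{Heine})$ to produce the infinite-product factor and the product-of-two-${}_{2}\phi_{1}$ forms listed in Lemma~$\ref{gen.lem:main}$ (to the shifted factors when $k+l-m+n\geq 0$, to the unshifted ones when $k+l-m+n<0$), and then collect prefactors. The paper's proof is just a two-sentence version of exactly this bookkeeping.
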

\begin{proof}
In the cases of $k + l - m + n \geq 0$, 
we apply $(\ref{Heine})$ to $y_{i} (a q^{k}, b q^{l}; c q^{m}; x q^{n})$ ($i = 1, 2$) in $(\ref{gen.def:Y})$. 
On the other hand, 
in the cases of $k + l - m + n < 0$, 
we apply $(\ref{Heine})$ to $y_{i} (a, b; c; x)$ ($i = 1, 2$) in $(\ref{gen.def:Y})$. 
Then, from Lemma~$\ref{gen.lem:main}$, we obtain Lemma~$\ref{gen.lem:Y,P}$. 
\end{proof}

From Lemma~$\ref{gen.lem:Y,P}$ and the definition of $P$, we have 
\begin{align}\label{Y(1,1,1,0)}
\Y{1}{1}{1}{0}{a}{b}{c}{x} 
%= - \lambda_{1} \frac{(a)_{1} (b)_{1}}{(c)_{1}} 
%x^{- \gamma} 
%\frac{(a b q x / c)_{\infty}}{(x)_{\infty}} 
%P \c{1}{1}{1}{0}{a}{b}{c}{x} 
= \lambda_{1} \frac{q - c}{c} x^{- \gamma} 
\frac{(a b q x / c)_{\infty}}{(x)_{\infty}}. 
\end{align}
%\begin{align*}
%P \c{1}{1}{1}{0}{a}{b}{c}{x} 
%= A_{0} \rvert_{(k, l, m, n) = (1, 1, 1, 0)} 
%= \frac{(q / c)_{1} (c)_{1}}{(a)_{1} (b)_{1}}. 
%\end{align*}
Therefore, from $(\ref{gen.tQ,Y}),\ (\ref{gen.tR,Y}),\ (\ref{Y(1,1,1,0)})$, and Lemma~$\ref{gen.lem:Y,P}$, 
we obtain the following lemma: 
\begin{lem}\label{gen.tQ,tR}
Assume that $k \leq l$. Then, the coefficients of $(\ref{gen.3tr'})$ can be expressed as 
\begin{align*}
\widetilde{Q} (k, l, m, n) 
&= - \frac{c (a)_{k} (b)_{l}}{(q - c) (c)_{m}} 
\frac{x^{1 - \max\left\{m, 0 \right\}} (x)_{\min\left\{n, 0 \right\}}}{(a b q x / c)_{\max\left\{k + l - m + n, 0 \right\} - 1}} 
P \c{k}{l}{m}{n}{a}{b}{c}{x}, \allowdisplaybreaks \\
\widetilde{R} (k, l, m, n) 
&= - \frac{(a)_{k} (b)_{l}}{(c)_{m}} 
\frac{x^{- \max\left\{m - 1, 0 \right\}} (x)_{\min\left\{n, 0 \right\}}}{(a b q x / c)_{\max\left\{k + l - m + n - 1, 0 \right\}}} 
P \c{k - 1}{l - 1}{m - 1}{n}{a q}{b q}{c q}{x}. 
\end{align*}
\end{lem}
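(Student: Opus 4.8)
The plan is to obtain both formulas by a direct substitution of the closed forms already established, followed by routine simplification of $q$-shifted factorials. For $\widetilde{Q}$, I would start from $(\ref{gen.tQ,Y})$, which expresses $\widetilde{Q}(k,l,m,n)$ as the quotient $\Y{k}{l}{m}{n}{a}{b}{c}{x}\big/\Y{1}{1}{1}{0}{a}{b}{c}{x}$. Substituting Lemma~$\ref{gen.lem:Y,P}$ for the numerator and $(\ref{Y(1,1,1,0)})$ for the denominator, the common constant $\lambda_{1}$ cancels, the power $x^{1-\gamma-\max\{m,0\}}$ in the numerator is divided by the power $x^{-\gamma}$ in the denominator, leaving $x^{1-\max\{m,0\}}$, and the remaining constant is $-c(a)_{k}(b)_{l}/\{(q-c)(c)_{m}\}$, multiplied by a quotient of two infinite products and by $P$.

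The two infinite-product quotients are then collapsed using only the elementary identities recalled in Section~$2$, namely $(a)_{i+j}=(a)_{i}(aq^{i})_{j}$ together with the convention $(a)_{i}=(a;q)_{\infty}/(aq^{i};q)_{\infty}$. These give $(x;q)_{\infty}/(xq^{\min\{n,0\}};q)_{\infty}=(x)_{\min\{n,0\}}$ and $(abq^{\max\{k+l-m+n,0\}}x/c;q)_{\infty}/(abqx/c;q)_{\infty}=1/(abqx/c)_{\max\{k+l-m+n,0\}-1}$, the latter interpreted through the negative-index convention when $\max\{k+l-m+n,0\}=0$. Inserting these two reductions yields precisely the stated expression for $\widetilde{Q}$.

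For $\widetilde{R}$ I would argue in the same way from $(\ref{gen.tR,Y})$, now applying Lemma~$\ref{gen.lem:Y,P}$ to $\Y{k-1}{l-1}{m-1}{n}{aq}{bq}{cq}{x}$, i.e.\ with the replacements $(k,l,m,n)\mapsto(k-1,l-1,m-1,n)$ and $(a,b,c)\mapsto(aq,bq,cq)$. The point that requires care — and which I expect to be the only real obstacle here — is that after these replacements the prefactor $\lambda_{1}$ is evaluated at $(aq,bq,cq)$ in the numerator but at $(a,b,c)$ in the denominator, so it does not simply cancel; one computes directly from its definition that $\lambda_{1}(aq,bq,cq)/\lambda_{1}(a,b,c)=(1-a)(1-b)(c-q)/\{c(1-c)\}$. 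One must also propagate the shift $\gamma\mapsto\gamma+1$ (since $cq=q^{\gamma+1}$) through the power of $x$, which becomes $x^{-\gamma-\max\{m-1,0\}}$; the replacements $(a)_{k}\mapsto(aq)_{k-1}$, $(b)_{l}\mapsto(bq)_{l-1}$, $(c)_{m}\mapsto(cq)_{m-1}$ through the Pochhammer prefactor; and the shifted exponent through the infinite product, which becomes $abq^{1+\max\{k+l-m+n-1,0\}}x/c$. After applying the same two infinite-product reductions as above, the surviving constant is $\{(1-a)(1-b)(aq)_{k-1}(bq)_{l-1}\}/\{(1-c)(cq)_{m-1}\}$ times $(c-q)/(q-c)$; the first factor equals $(a)_{k}(b)_{l}/(c)_{m}$ by $(a)_{k}=(1-a)(aq)_{k-1}$ and its analogues in $b$ and $c$, while the second equals $-1$ and supplies the overall sign, giving the claimed formula for $\widetilde{R}$.

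I would finish by checking the boundary cases separately against the negative-index convention for $(\cdot)_{i}$ — for instance $\max\{k+l-m+n,0\}=0$ (so that $(abqx/c)_{-1}$ occurs) and $m\le 0$ (so that $x^{1-\max\{m,0\}}=x$ and $x^{-\max\{m-1,0\}}=1$) — to confirm that no spurious factor has been introduced; apart from this bookkeeping the argument is entirely mechanical.
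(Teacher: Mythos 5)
Your proposal is correct and follows exactly the route the paper takes: substituting Lemma~\ref{gen.lem:Y,P} and $(\ref{Y(1,1,1,0)})$ into $(\ref{gen.tQ,Y})$ and $(\ref{gen.tR,Y})$ and simplifying the resulting quotients of infinite products; your explicit computation of $\lambda_{1}(aq,bq,cq)/\lambda_{1}(a,b,c)=(1-a)(1-b)(c-q)/\{c(1-c)\}$ and the recombination $(1-a)(aq)_{k-1}=(a)_{k}$ etc.\ supply precisely the bookkeeping the paper leaves implicit. The only detail worth noting is that applying Lemma~\ref{gen.lem:Y,P} to the shifted parameters requires $k-1\leq l-1$, which is immediate from $k\leq l$.
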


From $(\ref{gen.Q,tQ}),\ (\ref{gen.R,tR})$, and Lemma~$\ref{gen.tQ,tR}$, 
we are able to complete the proof of Theorem~$\ref{gen.main}$.

\subsection{Alternative expressions for $Q$ and $R$}\quad 
We complete the proof of Proposition~$\ref{gen.main2}$. 

Let $\widetilde{P}$ be the polynomial in $x$ defined by 
\begin{align*}
\widetilde{P} \c{k}{l}{m}{n}{a}{b}{c}{x} := 
\begin{cases}
\displaystyle\sum_{j = 0}^{d} \sum_{i = 0}^{\max\left\{n, 0 \right\}} \frac{\left(q^{-n} \right)_{i}}{(q)_{i}} q^{i} 
\left(C_{j - i} - D_{j - i + k - l} \right) x^{d - j}, 
& k + l - m + n \geq 0, \\
\displaystyle\sum_{j = 0}^{d} \!\sum_{i = 0}^{-\min\left\{n, 0 \right\}} \!\frac{\left(q^{n} \right)_{i}}{(q)_{i}} q^{(1 - n) i} 
\left(\widetilde{C}_{j - i} - \widetilde{D}_{j - i + k - l} \right) x^{d - j}, 
& k + l - m + n < 0, 
\end{cases}
\end{align*}
where $d, C_{j}, D_{j}, \widetilde{C}_{j}$, and $\widetilde{D}_{j}$ are defined as in Proposition~$\ref{gen.main2}$. 

To prove Proposition~$\ref{gen.main2}$, we give four lemmas. 
By using Lemma~$\ref{Gasper.tf1,2}$ as in the proof of Lemma~$\ref{gen.lem:(i)-(iv)}$, 
we obtain the following lemma: 
\begin{lem}\label{gen.lem:(i)-(iv).2}
For any integers $k, l, m$, and $n$, with $k \leq l$, the following statements are true: 
\begin{align*}
k + l - m + n \geq 0, \; n \geq 0 \; &\Rightarrow 
\sum_{i = 0}^{n} \frac{(q^{-n})_{i}}{(q)_{i}} q^{i} (C_{j - i} - D_{j - i + k - l}) = 0, 
\; j \geq \max\left\{l, l - m \right\} + n; 
\allowdisplaybreaks \\
k + l - m + n \geq 0, \; n \leq 0 \; &\Rightarrow \; 
C_{j} - D_{j + k - l} = 0, \; j \geq \max\left\{l, l - m \right\}; 
\allowdisplaybreaks \\
k + l - m + n \leq 0, \; n \geq 0 \; &\Rightarrow \; 
\widetilde{C}_{j} - \widetilde{D}_{j + k - l} = 0, \; j \geq \max\left\{-k, m - k \right\}; 
\allowdisplaybreaks \\
k + l - m + n \leq 0, \; n \leq 0 \; &\Rightarrow 
\sum_{i = 0}^{-n} \frac{(q^{n})_{i}}{(q)_{i}} q^{(1 - n) i} (\widetilde{C}_{j - i} - \widetilde{D}_{j - i + k - l}) = 0, 
\; j \geq \max\left\{-k, m - k \right\} - n. 
\end{align*}
\end{lem}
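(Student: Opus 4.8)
The plan is to prove Lemma~$\ref{gen.lem:(i)-(iv).2}$ by the same strategy used for Lemma~$\ref{gen.lem:(i)-(iv)}$, namely by specializing the transformation formulas in Lemma~$\ref{Gasper.tf1,2}$ to appropriate parameter values. The four implications split into two pairs: the cases $k+l-m+n\geq 0$ will follow from $(\ref{Gasper.tf1})$ and $(\ref{Gasper.tf2})$ applied to the ${}_{4}\phi_{3}$'s appearing in $C_{j}$, $D_{j}$, and the cases $k+l-m+n\leq 0$ from the same formulas applied to $\widetilde{C}_{j}$, $\widetilde{D}_{j}$. Concretely, for the first implication I would take the $r=2$ case of $(\ref{Gasper.tf2})$ and substitute for $(m,n_{1},n_{2},s,a,b,c_{1},c_{2})$ a quadruple-dependent choice of parameters so that, after multiplying through by a suitable product of $q$-shifted factorials, the left-hand side becomes $\sum_{i}\frac{(q^{-n})_{i}}{(q)_{i}}q^{i}D_{j-i+k-l}$ and the right-hand side becomes $\sum_{i}\frac{(q^{-n})_{i}}{(q)_{i}}q^{i}C_{j-i}$; the transformation formula then yields the claimed vanishing. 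The constraints $\min\{n_{1},\dots,n_{r},s\}=s$ and the convergence condition in $(\ref{Gasper.tf2})$ translate into the stated range $j\geq \max\{l,l-m\}+n$.

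For the two implications with $n\leq 0$ (the second and third), where there is no sum over $i$, I would instead use $(\ref{Gasper.tf1})$, which when $s=0$ is exactly Gasper's formula; the substitution of parameters is analogous but simpler since only a single ${}_{4}\phi_{3}$ appears on each side. The identity $C_{j}-D_{j+k-l}=0$ (resp.\ $\widetilde{C}_{j}-\widetilde{D}_{j+k-l}=0$) then follows directly once the two ${}_{4}\phi_{3}$'s are matched. The fourth implication, like the first, carries the extra $i$-summation and needs $(\ref{Gasper.tf2})$ again, now with the parameter choices dictated by the definitions of $\widetilde{C}_{j}$ and $\widetilde{D}_{j}$; here the factor $q^{(1-n)i}$ in the statement should emerge naturally from the $q^{si}$ and $b^{1-s}$-type factors in $(\ref{Gasper.tf2})$ after the substitution.

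The main obstacle I anticipate is purely bookkeeping: identifying the precise substitution for the eight parameters in each $r=2$ instance of $(\ref{Gasper.tf1,2})$, and then carrying out the $q$-shifted-factorial manipulations — repeatedly using the three identities $(a)_{i}=(a^{-1}q^{1-i})_{i}(-a)^{i}q^{i(i-1)/2}$, $(a)_{i+j}=(a)_{i}(aq^{i})_{j}$, and $(a)_{i-j}=\frac{(a)_{i}}{(a^{-1}q^{1-i})_{j}}(-a^{-1})^{j}q^{\{j(j+1)/2\}-ij}$ — to bring the prefactors into the exact shape of $\mu_{1},\mu_{2}$ and of the coefficients $C_{j},D_{j},\widetilde{C}_{j},\widetilde{D}_{j}$. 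A secondary subtlety is verifying that the degree bounds on $j$ are the sharp ones stated: the condition $\min\{n_{1},\dots,n_{r},s\}=s$ in $(\ref{Gasper.tf2})$ forces inequalities among the shifted $n_{\nu}$'s that, translated back, give exactly $j\geq\max\{l,l-m\}+n$ and its analogues, with the $\max\{l,l-m\}$ coming from the requirement that \emph{both} the $q^{1-k}/a$-type and $cq^{m-l}/b$-type upper parameters produce terminating ${}_{4}\phi_{3}$'s. Since the computation mirrors the proof of Lemma~$\ref{gen.lem:(i)-(iv)}$ almost verbatim, I expect the write-up to consist of presenting the parameter substitution for one representative case in detail and asserting that the remaining three follow in the same manner.

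\begin{proof}
We prove the first implication; the remaining three are proved in almost the same way, using $(\ref{Gasper.tf2})$ for the cases with $n\leq 0$ replaced by $n\geq 0$ in the summation and $(\ref{Gasper.tf1})$ (the $s=0$ reduction being unnecessary here) for the cases without an $i$-summation.

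Suppose $k+l-m+n\geq 0$ and $n\geq 0$. In the $r=2$ case of $(\ref{Gasper.tf2})$, replace
\[
(m,\,n_{1},\,n_{2},\,s,\,a,\,b,\,c_{1},\,c_{2})
\;\longmapsto\;
\Bigl(j,\; j-k,\; j-l,\; n,\; aq^{k-l-j}/b,\; aq^{-j},\; q^{1-j}/b,\; cq^{m-l-j}/b\Bigr),
\]
valid when $j\geq\max\{l,l-m\}+n$ (so that the terminating conditions and the hypothesis $\min\{n_{1},n_{2},s\}=s$ hold) and the convergence condition of $(\ref{Gasper.tf2})$ is satisfied. After this substitution, the ${}_{4}\phi_{3}$ on the left-hand side of $(\ref{Gasper.tf2})$, taken with summation index $i$, matches — up to the explicit prefactor of $q$-shifted factorials recorded in the definition of $D_{j}$ in Proposition~$\ref{gen.main2}$ — the series
\[
\P{q^{-(j-i)}}{aq^{k-l-(j-i)}/b}{a}{aq/c}{aq/b}{aq^{k-(j-i)}}{aq^{k-m-(j-i)+1}/c}{q^{k+l-m+n+1}},
\]
and the ${}_{4}\phi_{3}$ on the right-hand side matches, up to the prefactor in the definition of $C_{j}$, the series
\[
\P{q^{-(j-i)}}{aq^{-(j-i)}/b}{q^{1-l}/b}{cq^{m-l}/b}{aq^{k-l+1}/b}{q^{1-(j-i)}/b}{cq^{-(j-i)}/b}{q^{1-n}}.
\]
Using the three $q$-shifted-factorial identities recalled before Section~$2.1$ to transform the prefactors arising on each side of $(\ref{Gasper.tf2})$ into $\mu_{2}$ times the prefactor of $D_{j-i+k-l}$ and into $\mu_{1}$ times the prefactor of $C_{j-i}$ respectively, and then multiplying both sides of $(\ref{Gasper.tf2})$ by the common normalizing factor, we obtain
\[
\sum_{i=0}^{n}\frac{(q^{-n})_{i}}{(q)_{i}}q^{i}\,D_{j-i+k-l}
=\sum_{i=0}^{n}\frac{(q^{-n})_{i}}{(q)_{i}}q^{i}\,C_{j-i},
\]
which is the asserted identity. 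The argument for the case $n\leq 0$ with $k+l-m+n\geq 0$ uses $(\ref{Gasper.tf1})$ in place of $(\ref{Gasper.tf2})$ (so there is no $i$-summation), and the two cases with $k+l-m+n\leq 0$ are obtained by the entirely parallel substitutions into the ${}_{4}\phi_{3}$'s appearing in $\widetilde{C}_{j}$ and $\widetilde{D}_{j}$, the factor $q^{(1-n)i}$ in the statement emerging from the $q^{si}$ and $b^{1-s}$ factors of $(\ref{Gasper.tf2})$ after substitution. This completes the proof.
\end{proof}
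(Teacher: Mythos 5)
Your overall strategy is exactly the paper's: the paper proves this lemma with the single remark that it follows from Lemma~$\ref{Gasper.tf1,2}$ ``as in the proof of Lemma~$\ref{gen.lem:(i)-(iv)}$,'' i.e.\ by an $r=2$ parameter specialization of $(\ref{Gasper.tf1})$ and $(\ref{Gasper.tf2})$, with the left-hand side matching the $D$ (resp.\ $\widetilde{D}$) series and the right-hand side matching the $C$ (resp.\ $\widetilde{C}$) series. However, the one concrete step you commit to --- the substitution for the first implication --- is wrong, and under it the two ${}_{4}\phi_{3}$'s in $(\ref{Gasper.tf2})$ do not reduce to the series in $C_{j-i}$ and $D_{j-i+k-l}$. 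With your choice $b\mapsto aq^{-j}$, $c_{1}\mapsto q^{1-j}/b$, $c_{2}\mapsto cq^{m-l-j}/b$, the right-hand ${}_{4}\phi_{3}$ acquires the upper parameters $bq/c_{1}=ab$ and $bq/c_{2}=abq^{1+l-m}/c$, which cannot equal the upper parameters $q^{1-l}/b$ and $cq^{m-l}/b$ of the ${}_{4}\phi_{3}$ in $C_{j-i}$; and with $a\mapsto aq^{k-l-j}/b$ the left-hand ${}_{4}\phi_{3}$ has argument $(b/a)q^{2l+n+1}$ instead of the required $q^{k+l-m+n+1}$. The substitution is essentially forced: the $i$-dependent upper parameters $aq^{i}, bq^{i}$ of the left-hand ${}_{4}\phi_{3}$ must match the index-dependent upper parameters $q^{-j'}$ and $aq^{k-l-j'}/b$ of the series in $D_{j'}$ with $j'=j-i+k-l$, and the fixed lower parameter $bq^{m+1}$ must match $aq/b$. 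This gives
\begin{align*}
(m,\,n_{1},\,n_{2},\,s,\,a,\,b,\,c_{1},\,c_{2})
\;\longmapsto\;
\bigl(j,\; j-l,\; j-l+m,\; n,\; q^{l-k-j},\; aq^{-j}/b,\; aq^{l-j},\; aq^{l-j-m+1}/c\bigr),
\end{align*}
under which the left-hand ${}_{4}\phi_{3}$ equals the one in $D_{j-i+k-l}$, the right-hand one equals the one in $C_{j-i}$, the argument on the left is $q^{k+l-m+n+1}$, and the condition $\min\{n_{1},n_{2},s\}=s$ becomes exactly $j\geq\max\{l,\,l-m\}+n$.

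A second, related slip: the series you display as the left-hand target is the ${}_{4}\phi_{3}$ occurring in $D_{j-i}$, whereas the identity to be proved involves $D_{j-i+k-l}$; with the corrected substitution the index indeed comes out as $j-i+k-l$. Once these are fixed, the rest is the bookkeeping you describe (the mismatch between the factor $q^{ni}$ on the left of $(\ref{Gasper.tf2})$ and the factor $q^{i}$ in the lemma is absorbed by the factor $q^{(1-n)j'}$ in $D_{j'}$, since $q^{(1-n)(j-i+k-l)}$ contributes $q^{(n-1)i}$), and the other three implications follow from analogous --- but again not literally transplanted --- substitutions.
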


Also, in the same way as in the proof of Lemma~$\ref{gen.lem:main}$, 
by using $(\ref{q-binom'}),\ (\ref{2phi1,2phi1})$, and Lemma~$\ref{gen.lem:(i)-(iv).2}$, 
we obtain the following lemma: 
\begin{lem}\label{gen.tP_inf}
When $k \leq l$, the polynomial $\widetilde{P}$ can be rewritten in the following form: 
\begin{align*}
&\widetilde{P} \c{k}{l}{m}{n}{a}{b}{c}{x} \\
&= x^{d} 
\begin{cases}
\displaystyle \frac{1}{(q / x)_{-\max\left\{n, 0 \right\}}} \mu_{1} \cdot 
\p{q^{1 - l} / b}{c q^{m - l} / b}{a q^{k - l + 1} / b}{\frac{q^{1 - n}}{x}} \p{b}{b q / c}{b q / a}{\frac{c q}{a b x}} \\
\displaystyle \quad - \frac{1}{(q / x)_{-\max\left\{n, 0 \right\}}} x^{k - l} \mu_{2} \cdot 
\p{q^{1 - k} / a}{c q^{m - k} / a}{b q^{l - k + 1} / a}{\frac{q^{1 - n}}{x}} \p{a}{a q / c}{a q / b}{\frac{c q}{a b x}}, \\
\hspace{89mm} k + l - m + n \geq 0, \allowdisplaybreaks \\
\displaystyle (q / x)_{-\min\left\{n, 0 \right\}} \mu_{1} \cdot 
\p{a q^{k}}{a q^{k - m + 1} / c}{a q^{k - l + 1} / b}{\frac{c q^{m - k - l - n + 1}}{a b x}} 
\p{q / a}{c / a}{b q / a}{\frac{q}{x}} \\
\displaystyle \quad - (q / x)_{-\min\left\{n, 0 \right\}} x^{k - l} \mu_{2} \cdot 
\p{b q^{l}}{b q^{l - m + 1} / c}{b q^{l - k + 1} / a}{\frac{c q^{m - k - l - n + 1}}{a b x}} 
\p{q / b}{c / b}{a q / b}{\frac{q}{x}}, \\
\hspace{89mm} k + l - m + n < 0, 
\end{cases}
\end{align*}
where $d,\ \mu_{1}$ and $\mu_{2}$ are defined as in Proposition~$\ref{gen.main2}$. 
\end{lem}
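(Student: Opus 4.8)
The plan is to mimic, step for step, the proof of Lemma~\ref{gen.lem:main}, replacing the contiguity identities of Lemma~\ref{gen.lem:(i)-(iv)} by those of Lemma~\ref{gen.lem:(i)-(iv).2}. First I would use Lemma~\ref{gen.lem:(i)-(iv).2} to extend the finite sum over $j$ in the definition of $\widetilde{P}$ (which runs from $0$ to $d$) to an infinite sum: in each of the four cases, the identities in Lemma~\ref{gen.lem:(i)-(iv).2} assert that the relevant combination of $C_j, D_j$ (resp.\ $\widetilde{C}_j, \widetilde{D}_j$) vanishes once $j$ exceeds an explicit bound, and since $d$ is defined precisely so that $d-j$ would be negative beyond that bound, appending the identically-zero tail costs nothing. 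Concretely, after reindexing $j \mapsto d - j$ so that $x^{d-j}$ becomes $x^{j}$ times $x^{d}$ pulled out front, one gets
\begin{align*}
\widetilde{P} = x^{d}
\begin{cases}
\displaystyle\sum_{j=0}^{\infty}\sum_{i=0}^{\max\{n,0\}}\frac{(q^{-n})_i}{(q)_i}q^{i}\bigl(C_{j-i}-D_{j-i+k-l}\bigr)x^{j}, & k+l-m+n\geq 0,\\[2mm]
\displaystyle\sum_{j=0}^{\infty}\sum_{i=0}^{-\min\{n,0\}}\frac{(q^{n})_i}{(q)_i}q^{(1-n)i}\bigl(\widetilde{C}_{j-i}-\widetilde{D}_{j-i+k-l}\bigr)x^{j}, & k+l-m+n<0,
\end{cases}
\end{align*}
using the convention that $C_j, D_j, \widetilde{C}_j, \widetilde{D}_j$ vanish for negative $j$.

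Next I would factor the double sum. The inner sum over $i$ is a $q$-binomial-type geometric factor: by \eqref{q-binom'}, $\sum_{i}\frac{(q^{-n})_i}{(q)_i}q^{i}z^{i}$-type sums evaluate to finite products $(q^{1-n}/x)_{\cdots}$ or $(q/x)_{\cdots}$, which will produce exactly the prefactors $1/(q/x)_{-\max\{n,0\}}$ and $(q/x)_{-\min\{n,0\}}$ appearing in the statement — this is the analogue of the step \eqref{lem:main-2} in the proof of Lemma~\ref{gen.lem:main}. The remaining sum $\sum_{j}C_{j}x^{j}$ (and the three companions) is where the product formula \eqref{2phi1,2phi1} enters: each $C_j$ is by definition $\mu_1$ times a ratio of $q$-shifted factorials times a terminating ${}_{4}\phi_{3}$ in $q^{-j}$, which is precisely the shape of the $j$-th coefficient in the Cauchy product of two ${}_{2}\phi_{1}$ series as displayed in \eqref{2phi1,2phi1}. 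So I would match parameters: for $C_j$, read off that $\sum_j C_j x^j = \mu_1 \cdot {}_{2}\phi_{1}(q^{1-l}/b, cq^{m-l}/b; aq^{k-l+1}/b; q, q^{1-n}/x)\,{}_{2}\phi_{1}(b, bq/c; bq/a; q, cq/(abx))$, and similarly for $D_j$ (giving the $\mu_2$ term with an extra $x^{k-l}$ from the shift in the lower index $j+k-l$), and for $\widetilde{C}_j, \widetilde{D}_j$ in the $k+l-m+n<0$ case. Assembling these four products with the geometric prefactors yields exactly the four-branch formula in the statement of Lemma~\ref{gen.tP_inf}.

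The main obstacle I anticipate is purely bookkeeping rather than conceptual: verifying that the ${}_{4}\phi_{3}$ defining $C_j$ (etc.) really does coincide, parameter for parameter and prefactor and all, with the $j$-th Cauchy-product coefficient of the claimed pair of ${}_{2}\phi_{1}$'s. In \eqref{2phi1,2phi1} the product $\p{a}{b}{c}{gx}\p{d}{e}{f}{hx}$ has $x^j$-coefficient $\frac{(a)_j(b)_j}{(q)_j(c)_j}g^j\,{}_{4}\phi_{3}(q^{-j}, q^{1-j}/c, d, e; q^{1-j}/a, q^{1-j}/b, f; q, qch/(abg))$; here one must identify $(a,b,c,g)$ with the ``upper'' ${}_{2}\phi_{1}$'s parameters and argument and $(d,e,f,h)$ with the ``lower'' one's, check that the shift in each lower index $q^{1-j}/(\cdot)$ matches the corresponding upper index after the identification, and confirm the base-argument $qch/(abg)$ comes out to $q^{1-n}$ (resp.\ $q^{1+n}$ in the tilde cases). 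The index shift $j \mapsto j+k-l$ in the $D$-terms, and tracking the powers of $q$ and of $a,b,c$ hidden inside $\mu_1,\mu_2$, are the places where sign and exponent errors are most likely, so I would carry out this identification once carefully for $C_j$ and $D_j$ and then note that the $k+l-m+n<0$ case follows by the same computation after the substitution $(k,l,m,n)\mapsto$ (the reversed data), exactly as the four ${}_{2}\phi_{1}$-product expressions in Lemma~\ref{gen.lem:main} are related. Finally, since $\widetilde{P}$ is a polynomial of degree exactly $d$ in $x$ and the right-hand side is $x^d$ times a power series in $1/x$ whose terms of negative order in $x$ must cancel, the extension to the infinite sum is consistent, and no convergence issue arises because everything in sight terminates.
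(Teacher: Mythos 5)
Your proposal follows the paper's own proof exactly: the paper establishes this lemma ``in the same way as in the proof of Lemma~\ref{gen.lem:main},'' namely by extending the $j$-sum to infinity via the vanishing statements of Lemma~\ref{gen.lem:(i)-(iv).2} (whose thresholds are precisely $d+1$), evaluating the inner $i$-sum with $(\ref{q-binom'})$ to produce the $(q/x)$-prefactors, and identifying the remaining series as Cauchy products of two ${}_{2}\phi_{1}$'s via $(\ref{2phi1,2phi1})$. The only slip is notational: after pulling out $x^{d}$ the surviving series is in $x^{-j}$, not $x^{j}$ (so it is $\sum_{j} C_{j} x^{-j}$ that equals the product of the two ${}_{2}\phi_{1}$'s with arguments $q^{1-n}/x$ and $cq/(abx)$), which is consistent with your own closing observation that the bracketed factor is a power series in $1/x$.
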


Moreover, in the same way as in the proof of Lemma~$\ref{gen.lem:Y,P}$, 
by applying $(\ref{Heine})$ to $(\ref{gen.def:Z})$ and 
using Lemma~$\ref{gen.tP_inf}$, we obtain the following lemma: 
\begin{lem}\label{gen.lem:Z,tP}
The series $\widetilde{Y}$ defined by $(\ref{gen.def:Z})$ can be expressed as 
\begin{align*}
\Z{k}{l}{m}{n}{a}{b}{c}{x} 
&= \lambda_{2} 
\frac{x^{- \alpha - \beta - k - d} (q^{1 - \min\left\{n, 0 \right\}} / x)_{\infty}}{(c q^{1 - \max\left\{k + l - m + n, 0 \right\}} / (a b x))_{\infty}} 
\widetilde{P} \c{k}{l}{m}{n}{a}{b}{c}{x}, 
\end{align*}
where 
$\lambda_{2} := 
(a b)^{\gamma - \alpha - \beta + 1} (q)_{\infty}^{2} (a q / b)_{\infty} (b q / a)_{\infty} 
\big/ \left\{(a)_{\infty} (b)_{\infty} (a q / c)_{\infty}(b q / c)_{\infty} \right\}$. 
\end{lem}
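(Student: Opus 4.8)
The plan is to mimic, almost line for line, the proof of Lemma~$\ref{gen.lem:Y,P}$ but with the pair $(y_{1},y_{2})$ replaced by $(y_{3},y_{4})$ and with the definition $(\ref{gen.def:Z})$ of $\widetilde{Y}$ in place of $(\ref{gen.def:Y})$. First I would recall that $y_{3}(a,b;c;x)$ and $y_{4}(a,b;c;x)$ are the two local solutions around $x=\infty$, so that after the substitution $x\mapsto x q^{n}$ and the parameter shift $(a,b,c)\mapsto (a q^{k},b q^{l},c q^{m})$ each of the four factors appearing in $\widetilde{Y}\c{k}{l}{m}{n}{a}{b}{c}{x}$ is of the form (a power of $x$)$\times{}_{2}\widetilde{\phi}_{1}$ evaluated at an argument proportional to $1/x$. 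The key point is then to express the two products $y_{3}(aq^{k},\dots)y_{4}(a,\dots)$ and $y_{4}(aq^{k},\dots)y_{3}(a,\dots)$, and hence their difference $\widetilde{Y}$, in terms of the products of two ${}_{2}\phi_{1}$ series listed in Lemma~$\ref{gen.tP_inf}$.

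The key steps, in order: (1) Using the definition of $y_{3}$ and $y_{4}$, rewrite $\widetilde{Y}\c{k}{l}{m}{n}{a}{b}{c}{x}$ as an explicit prefactor (a monomial $c^{?}(ab)^{?}q^{?}$ times $x^{-\alpha-\beta-k}$ times $x^{-n\cdot(\text{something})}$) multiplying a difference of two products of ${}_{2}\widetilde{\phi}_{1}$'s; here I would convert each ${}_{2}\widetilde{\phi}_{1}$ back to ${}_{2}\phi_{1}$ by the definition $\f{a}{b}{c}{x}=\frac{(q)_{\infty}(c)_{\infty}}{(a)_{\infty}(b)_{\infty}}\p{a}{b}{c}{x}$, absorbing all the $(\,\cdot\,)_{\infty}$-factors into the constant $\lambda_{2}$. (2) Apply Heine's $q$-Euler transformation $(\ref{Heine})$ to one of the two ${}_{2}\phi_{1}$ factors in each product — exactly as in the proof of Lemma~$\ref{gen.lem:Y,P}$, to the factor with argument involving $q^{k+l-m+n}$ when $k+l-m+n\ge 0$, and to the factor in the ``base'' series when $k+l-m+n<0$ — so that the powers of $x$ and the $(\,\cdot\,)_{\infty}$-quotients line up to produce the factor $(q^{1-\min\{n,0\}}/x)_{\infty}/(cq^{1-\max\{k+l-m+n,0\}}/(abx))_{\infty}$ claimed in the statement. (3) Match the resulting difference of two ${}_{2}\phi_{1}$-products against the two cases of Lemma~$\ref{gen.tP_inf}$: in the case $k+l-m+n\ge 0$ the first product corresponds to the $\mu_{1}$-term and the second to the $x^{k-l}\mu_{2}$-term (with the $(q/x)_{-\max\{n,0\}}$ denominator accounting for the difference between $\widetilde{P}$ and the shifted argument), and similarly in the case $k+l-m+n<0$. (4) Collect all leftover monomial factors of $q$, $a$, $b$, $c$, and $x$, verify they combine into the single prefactor $\lambda_{2}\,x^{-\alpha-\beta-k-d}(q^{1-\min\{n,0\}}/x)_{\infty}/(cq^{1-\max\{k+l-m+n,0\}}/(abx))_{\infty}$, and in particular check that the $x$-exponent bookkeeping gives exactly $-\alpha-\beta-k-d$ with $d$ as in Proposition~$\ref{gen.main2}$.

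I expect the main obstacle to be step (4): the careful bookkeeping of the numerous monomial prefactors — the $a^{\gamma-\alpha-\beta+1}$ and $b^{\gamma-\alpha-\beta+1}$ from the definitions of $y_{3},y_{4}$, the powers of $x^{-\alpha}$ and $x^{-\beta}$, the shift $x\mapsto xq^{n}$, the $(ab/c)$-scalings introduced by $(\ref{Heine})$, and the constants $\mu_{1},\mu_{2}$ hidden inside Lemma~$\ref{gen.tP_inf}$ — and confirming that they telescope into the clean expression for $\lambda_{2}$ and the exponent $-\alpha-\beta-k-d$. The analytic/convergence side is routine: since $y_{3},y_{4}$ are solutions around $x=\infty$ the relevant ${}_{2}\phi_{1}$'s converge for $|1/x|$ small (or terminate), and $(\ref{Heine})$ and $(\ref{2phi1,2phi1})$ apply there; by analytic continuation the resulting identity of rational-function-times-infinite-product expressions holds wherever both sides are defined. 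The structural part of the argument — which factor to Heine-transform in which case, and the correspondence with the two cases of Lemma~$\ref{gen.tP_inf}$ — is a direct transcription of the $Y$-case, so no new idea is needed beyond that already used for Lemma~$\ref{gen.lem:Y,P}$.
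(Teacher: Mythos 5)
Your proposal follows exactly the paper's (very terse) argument: the paper proves this lemma ``in the same way as in the proof of Lemma~\ref{gen.lem:Y,P}, by applying (\ref{Heine}) to (\ref{gen.def:Z}) and using Lemma~\ref{gen.tP_inf}'' --- i.e.\ Heine-transform the shifted factors $y_{i}(aq^{k},bq^{l};cq^{m};xq^{n})$ when $k+l-m+n\geq 0$ and the unshifted factors $y_{i}(a,b;c;x)$ when $k+l-m+n<0$, match against the two ${}_{2}\phi_{1}$-products of Lemma~\ref{gen.tP_inf}, and collect the prefactors into $\lambda_{2}$ and the power of $x$. Your more detailed outline, including which factor to transform in each case and the bookkeeping of monomial prefactors, is a correct fleshing-out of that same route.
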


From Lemma~$\ref{gen.lem:Z,tP}$ and the definition of $\widetilde{P}$, we obtain 
\begin{align}\label{tY(1,1,1,0)}
\Z{1}{1}{1}{0}{a}{b}{c}{x} 
%= \lambda_{2} 
%\frac{x^{- \alpha - \beta - 1} (q / x)_{\infty}}{(c / (a b x))_{\infty}} 
%\widetilde{P} \c{1}{1}{1}{0}{a}{b}{c}{x} 
= \lambda_{2} \frac{c (b - a)}{a^{2} b^{2}} 
\frac{x^{- \alpha - \beta - 1} (q / x)_{\infty}}{(c / (a b x))_{\infty}}. 
\end{align}
%\begin{align*}
%\widetilde{P} \c{1}{1}{1}{0}{a}{b}{c}{x} 
%= (C_{0} - D_{0}) \rvert_{(k, l, m, n) = (1, 1, 1, 0)} 
%= \frac{c (b - a)}{a^{2} b^{2}}. 
%\end{align*}
Therefore, 
from $(\ref{gen.Q,tQ}),\ (\ref{gen.tQ,Z}),\ (\ref{tY(1,1,1,0)})$, and Lemma~$\ref{gen.lem:Z,tP}$, 
we obtain the following lemma: 
\begin{lem}\label{gen.lem:Q,tP}
For any integers $k, l, m$, and $n$, with $k \leq l$, the coefficient $Q$ of $(\ref{gen.3tr})$ can be expressed as 
\begin{align*}
Q = Q (k, l, m, n) = - \mu \frac{(1 - a) (1 - b) c}{(q - c) (1 - c)} 
\frac{x^{1 - \max\left\{m, 0 \right\}} (x)_{\min\left\{n, 0 \right\}}}{(a b q x / c)_{\max\left\{k + l - m + n, 0 \right\} - 1}} 
\widetilde{P} \c{k}{l}{m}{n}{a}{b}{c}{x}, 
\end{align*}
where $\mu$ is defined as in Proposition~$\ref{gen.main2}$. 
\end{lem}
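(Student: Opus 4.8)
The plan is to extract $Q$ from the ratio formula $(\ref{gen.tQ,Z})$ for $\widetilde{Q}$, now that both its numerator and its denominator have been evaluated. First I would substitute Lemma~$\ref{gen.lem:Z,tP}$ for $\widetilde{Y}(k,l,m;n)$ and $(\ref{tY(1,1,1,0)})$ for $\widetilde{Y}(1,1,1,0)$ into $(\ref{gen.tQ,Z})$. In the quotient the constant $\lambda_{2}$ cancels completely -- along with the factor $(ab)^{\gamma-\alpha-\beta+1}$ it carries -- and the non-integer powers $x^{-\alpha-\beta}$ of $x$ common to the two $\widetilde{Y}$'s cancel too, so an honest rational function of $a,b,c,q,x$ remains. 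What survives is $\widetilde{P}$ multiplied by $x^{1-k-d}$, by the monomial $-\lambda\, ab\,(b-a)^{-1}$ coming from the prefactors of $(\ref{gen.tQ,Z})$ and $(\ref{tY(1,1,1,0)})$, and by the two ratios of infinite products $(q^{1-\min\{n,0\}}/x)_{\infty}/(q/x)_{\infty}$ and $(c/(abx))_{\infty}/(cq^{1-\max\{k+l-m+n,0\}}/(abx))_{\infty}$ left over on dividing numerator by denominator.

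The second step is to convert those two ratios into the prefactor displayed in the statement. By $(a)_{\infty}/(aq^{j})_{\infty}=(a)_{j}$ the first collapses to $1/(q/x)_{-\min\{n,0\}}$ and the second to $1/(cq^{1-\max\{k+l-m+n,0\}}/(abx);q)_{\max\{k+l-m+n,0\}-1}$. I would then apply the reflection identity $(a)_{i}=(a^{-1}q^{1-i})_{i}(-a)^{i}q^{i(i-1)/2}$ once to each of these finite $q$-shifted factorials -- with $a=q/x$ in the first and $a=cq^{1-\max\{k+l-m+n,0\}}/(abx)$ in the second -- producing the factors $(x)_{\min\{n,0\}}$ and $1/(abqx/c)_{\max\{k+l-m+n,0\}-1}$ required by the statement, each accompanied by a power of $-1$, a power of $q$, and some powers of $a,b,c,x$. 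Collecting all the powers of $x$ -- the $x^{1-k-d}$ already present, $x^{-\min\{n,0\}}$ from the first reflection, and $x^{\max\{k+l-m+n,0\}-1}$ from the second -- and inserting $d=\max\{k+l-m+n,0\}+\max\{m,0\}-\min\{n,0\}-k-1$, I expect everything to collapse to exactly $x^{1-\max\{m,0\}}$, with no stray power of $x$ left over; this is the one place where the precise form of $d$ is genuinely used.

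Third, I would pass from $\widetilde{Q}$ to $Q$ via $(\ref{gen.Q,tQ})$ and collect the leftover constants. The constant accumulated in $\widetilde{Q}$ should come out to $-\mu\,\frac{c}{q-c}\,\frac{(a)_{k}(b)_{l}}{(c)_{m}}$, with $\mu$ the expression of Proposition~$\ref{gen.main2}$; I would confirm this by matching three pieces. The sign: the explicit $-$ in $(\ref{gen.tQ,Z})$, the $(-1)^{k+l-m}$ inside $\lambda$, and the two signs from the reflections combine to $-(-1)^{k+l-m+M-N-1}$, the sign of $-\mu$, where $M:=\max\{k+l-m+n,0\}$ and $N:=\min\{n,0\}$. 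The power of $q$: $\{k(k-1)+l(l-1)-m(m-1)\}/2$ from $\lambda$, plus $-N(N-1)/2$ and $M(M-1)/2$ from the two reflections, gives exactly $\{k(k-1)+l(l-1)-m(m-1)+M(M-1)-N(N-1)\}/2$. The $a,b,c$-dependence: $a^{k+M}b^{l+M}c^{-m-M+1}/(b-a)$, which is precisely what $\mu\,\frac{c}{q-c}\,\frac{(a)_{k}(b)_{l}}{(c)_{m}}$ reduces to once $(q-c)$ and the Pochhammers cancel against one another. Then $(\ref{gen.Q,tQ})$ multiplies by $\frac{(cq)_{m-1}}{(aq)_{k-1}(bq)_{l-1}}$, and using $(a)_{k}/(aq)_{k-1}=1-a$, $(b)_{l}/(bq)_{l-1}=1-b$, and $(cq)_{m-1}/(c)_{m}=1/(1-c)$, the product $\frac{(cq)_{m-1}}{(aq)_{k-1}(bq)_{l-1}}\cdot\frac{(a)_{k}(b)_{l}}{(c)_{m}}$ collapses to $\frac{(1-a)(1-b)}{1-c}$, which delivers the factor $(1-a)(1-b)c/\{(q-c)(1-c)\}$ and finishes the proof.

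The computation carries no real conceptual content; the only obstacle is the bookkeeping of signs, powers of $q$, and -- most error-prone -- powers of $x$ through the two reflection steps, the $x$-cancellation being the natural place for a slip to occur.
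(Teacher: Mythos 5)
Your proposal is correct and follows exactly the route the paper takes: substituting Lemma~\ref{gen.lem:Z,tP} and $(\ref{tY(1,1,1,0)})$ into $(\ref{gen.tQ,Z})$, simplifying the infinite products to finite $q$-shifted factorials via the reflection identity, and then passing to $Q$ through $(\ref{gen.Q,tQ})$; the paper states this derivation in one sentence and you have simply filled in the bookkeeping. Your accounting of the sign, the power of $q$, and the exponent of $x$ (which indeed collapses to $1-\max\{m,0\}$ using the definition of $d$) all check out.
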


From the uniqueness of the pair $(Q, R)$ proved in Section~$3. 1$, 
we find that $Q$ appearing in Theorem~$\ref{gen.main}$ and 
$Q$ appearing in Lemma~$\ref{gen.lem:Q,tP}$ are equal to each other. 
Therefore, by comparing the expressions for $Q$ given in Theorem~$\ref{gen.main}$ and Lemma~$\ref{gen.lem:Q,tP}$, 
we have $P = \mu \widetilde{P}$. 
This completes the proof of Proposition~$\ref{gen.main2}$.

\begin{remark}
We consider the degree of $P$. 
From Proposition~$\ref{gen.main2}$, we find that 
for any integers $k, l, m$, and $n$, with $k \leq l$, 
the degree of the polynomial $P$ is no more than $d$, 
where $d := \max\left\{k + l - m + n, 0 \right\} + \max\left\{m, 0 \right\} - \min\left\{n, 0 \right\} - k - 1$, 
and the coefficient of $x^{d}$ in $P$ 
equals $\mu (C_{0} - D_{0}) \rvert_{k = l}$ if $k = l$ ; $\mu C_{0}$ if $k < l$. 
Namely, the coefficient of $x^{d}$ in $P$ equals 
\begin{align*}
\mu 
\begin{cases}
\, \displaystyle\left(\frac{c}{a b} \right)^{k} q^{k (m - k - l - n + 1)} 
\left\{a^{m - 2k - n} (a)_{k} (a q / c)_{k - m} - b^{m - 2k - n} (b)_{k} (b q / c)_{k - m} \right\}, & k = l, \\
\, \displaystyle\left(\frac{c}{a b} \right)^{k} q^{k (m - k - l - n + 1)} a^{m - k - l - n} 
\frac{(a)_{k} (a q / c)_{k - m}}{(a q / b)_{k - l}}, & k < l, 
\end{cases}
\end{align*}
where $\mu$ is defined as in Proposition~$\ref{gen.main2}$. 
\end{remark}

\subsection{Relation between $Q$ and $R$}\quad 
We prove Corollary~$\ref{cor}$. 

From Theorem~$\ref{gen.main}$, we have 
\begin{align*}
&Q (k - 1, l - 1, m - 1, n) \big\rvert _{(a, b, c) \mapsto (a q, b q, c q)} \\
&\quad = - \frac{(1 - a q) (1 - b q) c}{(1 - c) (1 - c q)} 
\frac{x^{1 - \max\left\{m - 1, 0 \right\}} (x)_{\min\left\{n, 0 \right\}}}{(a b q^{2} x / c)_{\max\left\{k + l - m + n - 1, 0 \right\} - 1}} 
P \c{k - 1}{l - 1}{m - 1}{n}{a q}{b q}{c q}{x} \\
&\quad = \frac{(1 - a q) (1 - b q) x (c - a b q x)}{(1 - c) (1 - c q)} R (k, l, m, n). 
\end{align*}
Thus Corollary~$\ref{cor}$ is proved.

\medskip
\thanks{\bf{Acknowledgments}}
We are deeply grateful to Prof. Hiroyuki Ochiai for helpful comments. 
Also, we would like to thank Naoya Yamaguchi for his constructive suggestions.

\medskip
\begin{flushleft}
Yuka Suzuki\\
Graduate School of Mathematics\\
Kyushu University\\
Nishi-ku, Fukuoka 819-0395 \\
Japan\\
y-suzuki@math.kyushu-u.ac.jp
\end{flushleft}

\end{document}